\author{Claire Amiot}
\address{Institut Fourier, Universit\'e Joseph Fourier, 100 rue des maths, 38402 Saint Martin d'H\`eres, France}
\email{claire.amiot@ujf-grenoble.fr}\thanks{All authors were supported by the project 196600/V30 from the Norwegian Research Council.}\thanks{The first author is partially supported by the ANR project ANR-09-BLAN-0039-02.}
\author{Osamu Iyama}
\address{Graduate School of Mathematics, Nagoya University}
\email{iyama@math.nagoya-u.ac.jp}
\thanks{The second author was supported by JSPS Grant-in-Aid for Scientific Research 21740010, 21340003, 20244001 and 22224001}
\author{Idun Reiten}
\address{Insitutt for matematiske fag,
Norges Teknisk-Naturvitenskapelige Universitet,
N-7491 Trondheim, Norway}
\email{idunr@math.ntnu.no}
\dedicatory{Dedicated to Ragnar-Olaf Buchweitz on the occasion of his
sixtieth birthday}
\numberwithin{equation}{subsection}
\newcommand{\Hom}{{\sf Hom }}\newcommand{\RHom}{\mathbf{R}{\sf Hom }}
\newcommand{\End}{{\sf End }}
\newcommand{\Ext}{{\sf Ext }}
\renewcommand{\mod}{{\sf mod \hspace{.02in}  }}
\newcommand{\fd}{{\sf fd \hspace{.02in}  }}
\newcommand{\Mod}{{\sf Mod \hspace{.02in} }}
\newcommand{\per}{{\sf per \hspace{.02in}  }}
\newcommand{\proj}{{\sf proj \hspace{.02in} }}
\newcommand{\add}{{\sf add \hspace{.02in} }}
\newcommand{\gr}{{\sf gr \hspace{.02in} }}
\newcommand{\grproj}{{\sf grproj \hspace{.02in} }}
\newcommand{\Gr}{{\sf Gr \hspace{.02in} }}
\newcommand{\thick}{{\sf thick  \hspace{.02in} }}
\newcommand{\ten}{\otimes}
\newcommand{\lten}{\overset{\mathbf{L}}{\ten}}
\newcommand{\Talg}{{\sf T}}
\newcommand{\SL}{{\sf SL}}
\newcommand{\gldim}{{\sf gl.dim \hspace{.02in}}}
\newcommand{\injdim}{{\sf inj.dim}}
\newcommand{\projdim}{{\sf proj.dim}}
\newcommand{\A}{\underline{A}}
\newcommand{\B}{\underline{B}}
\newcommand{\Aa}{\mathcal{A}}
\newcommand{\Cc}{\mathcal{C}}
\newcommand{\Dd}{\mathcal{D}}
\newcommand{\Kk}{\mathcal{K}}
\newcommand{\Tt}{\mathcal{T}}
\newcommand{\Vv}{\mathcal{V}}
\newcommand{\SSS}{\mathbb{S}}
\newcommand{\ZZ}{\mathbb{Z}}
\newcommand{\CM}{{\sf CM \hspace{.02in}}}
\newcommand{\Db}{\mathcal{D}^{\rm b}}
\newcommand{\BPi}{{\mathbf\Pi}}
\newcommand{\bsm}{\begin{smallmatrix}}
\newcommand{\esm}{\end{smallmatrix}}
\newtheorem{thm}{Th{\'e}or{\`e}me}[section]
\newtheorem{thma}{Theorem}[section]
\newtheorem*{thm*}{Th{\'e}or{\`e}me}
\newtheorem{lema}[thma]{Lemma}
\newtheorem*{lem*}{Lemme}
\newtheorem{cora}[thma]{Corollary}
\newtheorem*{prop*}{Proposition}
\newtheorem{prop}[thma]{Proposition}
\theoremstyle{remark}
\newtheorem{rema}[thma]{Remark}
\newtheorem{exa}[thm]{Example}
\theoremstyle{definition}
\newtheorem{dfa}[thma]{Definition}
\begin{document}
\title{Stable categories of Cohen-Macaulay modules and cluster categories}
\begin{abstract}
By Auslander's algebraic McKay correspondence,
the stable category of Cohen-Macaulay modules over a simple singularity
is triangle equivalent to the $1$-cluster category of the path algebra of a Dynkin quiver (i.e. the orbit category of the derived category  by the action of  the Auslander-Reiten translation).
In this paper we give a systematic method to construct a similar type of triangle equivalence between the stable category
of Cohen-Macaulay modules over a Gorenstein isolated singularity $R$ and the generalized (higher) cluster category of a finite dimensional
algebra $\Lambda$. The key role is played by a bimodule Calabi-Yau algebra, which is the
higher Auslander algebra of $R$ as well as the higher preprojective algebra of an extension of $\Lambda$.
As a byproduct, we give a triangle equivalence between the stable category of graded Cohen-Macaulay
$R$-modules and the derived category of $\Lambda$. Our main results apply in particular to a class of cyclic quotient singularities and to certain toric affine threefolds associated with dimer models. 
\end{abstract}

\thanks{2010 {\em Mathematics Subject Classification.} 13C14, 14F05, 16G10, 16G50, 18E30}
\thanks{{\em Key words and phrases.} Cohen-Macaulay modules, stable categories, Calabi-Yau categories, cluster categories, cluster tilting, Auslander algebras, preprojective algebras, Calabi-Yau algebras}
\maketitle
\tableofcontents

\section*{Introduction}

There has recently been a lot of interest centered around $\Hom$-finite triangulated
Calabi-Yau categories over a field $k$, especially in dimension two.
The work on 2-Calabi-Yau categories was originally motivated by trying to categorify the
ingredients in the definition of the cluster algebras introduced by Fomin and Zelevinsky~\cite{FZ02}.
It started in~\cite{BMR+06} through the cluster categories together with a special class of objects called cluster tilting objects,
and in~\cite{GLS06,BIRS09,GLS07,IO09} through the investigation of preprojective algebras and their higher analogs. 

The generalized $n$-cluster categories associated with finite dimensional algebras of global dimension at most $n$ were introduced in~\cite{Ami09,Guo10}.
In these categories, special objects called $n$-cluster tilting play an important role.
The cluster categories are a special case of the generalized 2-cluster categories, and the 2-cluster tilting objects are then the cluster tilting objects.
The generalized $n$-cluster categories can be considered to be the canonical ones among $n$-Calabi-Yau triangulated categories having $n$-cluster tilting objects.

On the other hand, a well-known example of Calabi-Yau triangulated categories was given in old work by Auslander~\cite{Aus78},
where the stable category of (maximal) Cohen-Macaulay modules over commutative isolated $d$-dimensional local Gorenstein singularities are shown to be $(d-1)$-Calabi-Yau.
Recently they are studied from the viewpoint of higher analog of Auslander-Reiten theory,
and the existence of $(d-1)$-cluster tilting objects is shown for quotient singularities in~\cite{Iya07a} and for some three dimensional hypersurface singularities in~\cite{BIKR08}.
They are further investigated in~\cite{IY08,KR08,KMV08}.

It is of interest to understand the relationship between these two classes of Calabi-Yau triangulated categories, i.e. the stable categories of Cohen-Macaulay modules and the generalized $n$-cluster categories.
A well-known example is given by Kleininan singularities.
They are given as hypersurfaces $R=k[x,y,z]/(f)$ as well as invariant subrings $R=S^G$ of $G$,
where $S=k[X,Y]$ is a polynomial algebra over an algebraically closed field $k$ of characteristic zero
and $G$ is a finite subgroup of $\SL_2(k)$. The correspondence between $f$ and $G$ is given as follows.
\[\begin{array}{|c||c|c|c|c|c|}\hline
\mbox{type}&A_n&D_n&E_6&E_7&E_8\\ \hline
f&x^{n+1}+yz&x^{n-1}+xy^2+z^2&x^4+y^3+z^2&x^3y+y^3+z^2&x^5+y^3+z^2\\ \hline
G&\begin{smallmatrix}\quad \\ \quad \\ \mbox{cyclic}\\ \quad\end{smallmatrix}&\begin{smallmatrix}\mbox{binary}\\ \mbox{dihedral}\end{smallmatrix}&
\begin{smallmatrix}\mbox{binary}\\ \mbox{tetrahedral}\end{smallmatrix}&\begin{smallmatrix}\mbox{binary}\\ \mbox{octahedral} \end{smallmatrix}&
\begin{smallmatrix}\mbox{binary}\\ \mbox{icosahedral}\end{smallmatrix} \\ \hline
\end{array}\]
In this case the stable category $\underline{\CM}(R)$ is equivalent to the mesh category $M(\overline{Q})$ of the Auslander-Reiten quiver of $\underline{\CM}(R)$, which is the double $\overline{Q}$ of a Dynkin quiver $Q$ \cite{Rei87,RV89}. 
On the other hand, $M(\overline{Q})$ is equivalent to the 1-cluster category $\Cc_1(kQ)$ of $Q$, i.e. the orbit category $\Db(kQ)/\tau$ of the derived category $\Db(kQ)$ by the action of $\tau$.
Hence we can deduce an equivalence 
\begin{equation}\label{motivation}
\underline{\CM}(R)\simeq\Cc_1(kQ),
\end{equation}
which is in fact a triangle equivalence (see Remark~\ref{in fact triangle}).
One of the aims of this paper is to prove this type of triangle equivalences for a more general class of quotient singularities.

Some crucial observations in the above setting are the following, where $\hat{R}$ and $\hat{S}$ are the completions of $R$ and $S$ at the origin respectively:
\begin{itemize}
\item \cite{Her78,Aus86} We have $\CM(R)=\add S$ and $\CM(\hat{R})=\add\hat{S}$. In particular $\hat{R}$ is representation-finite in the sense that there are only finitely many indecomposable Cohen-Macaulay modules.
\item \cite{Aus86} The Auslander algebra $\End_{\hat{R}}(\hat{S})$ (respectively, $\End_R(S)$) is isomorphic to the skew group algebra $\hat{S}*G$ (respectively, $S*G$). In particular, the AR quiver of $\CM(\hat{R})$ is isomorphic to the McKay quiver of $G$, which is the double of an extended Dynkin quiver $\widetilde{Q}$. (Note that in this case one has a triangle equivalence between the stable categories $\underline{\CM}(R)\simeq\underline{\CM}(\hat{R})$)
\item \cite{Rei87,RV89,BSW10} $S*G$ is Morita-equivalent to the preprojective algebra $\Pi$ of $\widetilde{Q}$. Hence $k\widetilde{Q}$ is the degree zero part of a certain grading of $\Pi$.
\end{itemize}
In particular the equivalence \eqref{motivation} is a direct consequence of the above observations. Also we have the following bridge between $R$ and $kQ$, where $e$ is the idempotent of $\End_R(S)\simeq S*G$ corresponding to the summand $R$ of $S$:
\[\xymatrix@C=8em{
R\ar@<.3ex>[r]^-{\mbox{\scriptsize Auslander algebra}}&S*G\stackrel{{\rm Morita}}{\sim}\Pi\ar@<.3ex>[r]^-{\mbox{\scriptsize degree 0 part}}\ar@<.3ex>[l]^-{e(-)e}&k\widetilde{Q}\ar@<.3ex>[r]^-{-/\langle e\rangle}\ar@<.3ex>[l]^-{\mbox{\scriptsize preprojective algebra}}&kQ
}\]
We will deal with the more general class of quotient singularities $S^G$,
where $S=k[x_1,\ldots,x_d]$ and $G$ is a finite cyclic subgroup of the special linear subgroup $\SL_d(k)$ with additional conditions, where no $g\neq1$ has eigenvalue $1$.
We will construct in Theorem~\ref{main result for singularities} a triangle equivalence
\begin{equation}\label{eq.equivalence}\underline{\CM}(S^G)\simeq\Cc_{d-1}(\underline{A})\end{equation}
for the generalized $(d-1)$-cluster category $\Cc_{d-1}(\underline{A})$ of some algebra $\underline{A}$ of global dimension at most $d-1$, which we describe.
This is shown as a special case of our main Theorem~\ref{main diagram}.
There we start from a bimodule $d$-Calabi-Yau graded algebra $B$ of Gorenstein parameter $1$ (e.g. $B$ is the skew group algebra $S*G$ when we deal with quotient singularities with additional conditions).
For an idempotent $e$ satisfying certain axioms, we have a similar picture as above:
\[\xymatrix@C=4cm{
eBe\ar@<.3ex>[r]^-{\mbox{\scriptsize $(d-1)$-Auslander algebra}}&B\ar@<.3ex>[r]^-{\mbox{\scriptsize degree 0 part}}\ar@<.3ex>[l]^-{e(-)e}&B_0\ar@<.3ex>[r]^-{-/\langle e\rangle}\ar@<.3ex>[l]^-{\mbox{\scriptsize $d$-preprojective algebra}}&B_0/\langle e\rangle
}\]
Our main result asserts that there exists a triangle equivalence
\[\underline{\CM}(eBe)\simeq\Cc_{d-1}(B_0/\langle e\rangle).\]
In addition to the quotient singularities already mentioned, this also applies to some examples coming from dimer models.

The main step of the proof consists of constructing a triangle equivalence 
$$\underline{\CM}^{\ZZ}(eBe)\simeq \Db(B_0/\langle e\rangle)$$
where $\CM^{\ZZ}(eBe)$ is the category of graded Cohen-Macaulay $eBe$-modules.
This intermediate result in the case where $B=S*G$ recovers a result due to Kajiura-Saito-Takahashi~\cite{KST07} and Lenzing-de la Pe\~na~\cite{LP06} for $d=2$ and due to Ueda~\cite{Ued08} for any $d$ and $G$ cyclic.
Moreover the triangle equivalence \eqref{eq.equivalence} was already shown in \cite{KR08} for the case $d=3$ and $G={\rm diag}(\omega,\omega,\omega)$ where $\omega$ is a primitive third root of unity.
It would be interesting to generalize our result to non-cyclic quotient singularities. This could then be regarded as an analog of
a triangle equivalence $\underline{\CM}^{\ZZ}(S^G)\simeq \Db(\Lambda)$ for some finite dimensional algebra $\Lambda $ given in~\cite{IT10}.

Results of a similar flavor have been shown in previous papers.
In~\cite{Ami09,ART11,AIRT10}, it was shown that the 2-Calabi-Yau categories
$\Cc_w$ associated with elements $w$ in Coxeter groups in \cite{BIRS09} are triangle equivalent to generalized $2$-cluster categories $\Cc_{2}(\underline{A})$ for some algebras $\underline{A}$ of global dimension at most two.
In~\cite{IO09}, it was shown that the stable categories of modules over $d$-preprojective
algebras of $(d-1)$-representation-finite algebras are triangle equivalent to
generalized $d$-cluster categories of stable $(d-1)$-Auslander algebras.
We were able to use some of the ideas in these papers for $d\ge 2$.

We refer to \cite{TV10} for similar independent results based on the language of quivers with potential. We thank Michel Van den Bergh for informing us about his work with Thanhoffer de V\"olcsey.

Some results in this paper were presented at a workshop in Oberwolfach (May 2010) \cite{AIR10}, Tokyo (August 2010), Banff (September 2010), Bielefeld (May 2011), Paris (June 2011), Shanghai (September 2011), Trondheim (March 2012), Banff (May 2012) and Guanajuato (May 2012).

\medskip
In section~1 we give some background material on  $n$-cluster tilting subcategories in 
$n$-Calabi-Yau categories and on generalized $n$-cluster categories.
Let $B$ be a bimodule $d$-Calabi-Yau algebra (see Definition~\ref{definition of CY algebra}) with an idempotent $e$, and let $C=eBe$.
In section~2, under certain conditions on $B$ and $e$, we show that $C$ is an Iwanaga-Gorenstein algebra
(see Definition~\ref{def of IG}),
and that $Be$ is a $(d-1)$-cluster tilting object in the category $\CM(C)$ of Cohen-Macaulay $C$-modules.
In section~3, which is independent of section 2, we assume that $B=\bigoplus_{\ell\ge0}B_\ell$ is graded, and give sufficient conditions for $B$ to be the $d$-preprojective algebra of $A=B_0$.
In particular $A$ is a $(d-1)$-representation-infinite algebra in the sense of \cite{HIO}
and a quasi extremely-Fano algebra in the sense of~\cite{MM10}.
In section~4, we use results from sections 2 and 3 to prove our main result, which gives sufficient conditions for the stable category $\underline{\CM}(C)$ to be triangle equivalent to
a generalized $(d-1)$-cluster category.
The application to $C$ being an invariant ring is given in section~5.
In section~6 we apply our main result to Jacobian algebras constructed from dimer models on the torus.

\subsection*{Notation}
Let $k$ be a field. We denote by $D=\Hom_k(-,k)$ the $k$-dual.
All modules are right modules.

For a $k$-algebra $A$, we denote by $\Mod A$ the category of $A$-modules, by $\mod A$ the category of finitely generated $A$-modules and by $\fd A$ the category of finite dimensional $A$-modules.
We let $\ten:=\ten_k$ and $A^{\rm e}:=A^{\rm op}\ten A$.
For a $\ZZ$-graded $k$-algebra $B$, we denote by $\Gr B$ the category of all $\ZZ$-graded $B$-modules, by $\gr B$ the category of finitely generated $\ZZ$-graded $B$-modules and by $\grproj B$ the category of finitely generated $\ZZ$-graded projective $B$-modules. 
We often regard $B^{\rm e}$ in a natural way as a $\ZZ$-graded algebra, and consider the category $\Gr B^{\rm e}$ of $\ZZ$-graded $B^{\rm e}$-modules.

For an abelian category $\Aa$, we denote by $\Cc(\Aa)$ the category of chain complexes, by $\Kk(\Aa)$ the homotopy category and by $\Dd(\Aa)$ the derived category.
We denote by $\Cc^{\rm b}(\Aa)$ the category of bounded chain complexes, by $\Kk^{\rm b}(\Aa)$ the bounded homotopy category and by $\Db(\Aa)$ the bounded derived category.

For a $k$-algebra $A$, we let $\Dd(A):=\Dd(\Mod A)$. We denote by $\per A$ the thick subcategory of $\Dd(A)$ generated by $A$. We denote by $\Dd^{\fd}(A)$ the full subcategory of $\Dd(A)$ consisting of objects $X$ satisfying $\dim_k(H^*(X))<\infty$. For a noetherian $k$-algebra $A$, we denote by $\Db(A)$ the full subcategory of $\Dd(A)$ consisting of objects $X$ satisfying $H^*(X)\in\mod A$.

We denote by $gf$ the composition of morphisms (or arrows) $f:X\to Y$ and $g:Y\to Z$.

\section{Background material}

In this section we give some background material on cluster tilting subcategories and on generalized cluster categories.

\subsection{Cohen-Macaulay modules over Iwanaga-Gorenstein algebras}

The following class of noetherian algebras was given by Iwanaga \cite{Iwa79}.

\begin{dfa}\label{def of IG}
A noetherian algebra $C$ is called \emph{Iwanaga-Gorenstein} if $\injdim_CC<\infty$ and $\injdim_{C^{\rm op}}C<\infty$.
\end{dfa}

For example, commutative local Gorenstein algebras and finite dimensional selfinjective algebras are clearly Iwanaga-Gorenstein.
Iwanaga-Gorenstein algebras have a distinguished class of modules defined as follows.

\begin{dfa}
Let $C$ be an Iwanaga-Gorenstein algebra.
The category $\CM(C)$ of \emph{(maximal) Cohen-Macaulay $C$-modules} is defined by
\[\CM(C):=\{X\in\mod C\ |\ \Ext^i_C(X,C)=0\ \mbox{ for any }i>0\}.\]
The \emph{stable category} $\underline{\CM}(C)$ has the same objects as $\CM(C)$,
and the morphisms spaces are given by
\[\Hom_{\underline{\CM}(C)}(X,Y):=\Hom_C(X,Y)/[C](X,Y)\]
where $[C](X,Y)$ consists of morphisms factoring through the smallest full subcategory $\add C$ of $\mod C$ stable under direct summands and containing $C$.
\end{dfa}

If $C$ is a local commutative Gorenstein algebra, then $\CM(C)$ is exactly the category of maximal Cohen-Macaulay $C$-modules.
If $C$ is a finite dimensional selfinjective algebra, then $\CM(C)$ is just $\mod C$.

Let us give basic properties of the category $\CM(C)$. 

\begin{prop}\label{CM over Iwanaga}
Let $C$ be an Iwanaga-Gorenstein algebra.
\begin{itemize}
\item[(a)] $\CM(C)$ is a Frobenius category and $\underline{\CM}(C)$ is a triangulated category \cite[Thm~2.6]{Hap88}.
\item[(b)] We have dualities $\xymatrix{\CM(C)\ar@<.2em>[rr]^{\Hom_C(-,C)}&&\CM(C^{\rm op})\ar@<.2em>[ll]^{\Hom_{C^{\rm op}}(-,C)}}$
which are mutually quasi-inverse and preserve the extension groups.
\item[(c)] We have a triangle equivalence $\underline{\CM}(C)\simeq \Db(C)/\per C$. \cite[Thm 4.4.1]{Buc87},\cite{KV87,Ric89}
\end{itemize}
\end{prop}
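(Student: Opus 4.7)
The three parts are standard results with known references; a proof proposal amounts to laying out the main ingredients for each.

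For (a), the plan is to check that $\CM(C)$ inherits a Frobenius exact structure from $\mod C$. Conflations will be short exact sequences of $\mod C$ whose three terms all lie in $\CM(C)$; stability under extensions is automatic from the long exact sequence of $\Ext^\ast_C(-,C)$, and stability under kernels of deflations follows again from $\Ext^i_C(X,C)=0$ for $i>0$. The candidate class of projective-injective objects is $\add C$. The projectivity side is obvious: every $X\in\CM(C)$ admits an epimorphism $C^n\twoheadrightarrow X$ whose kernel is again in $\CM(C)$ by the $\Ext$-vanishing. The only genuine point is to produce enough injectives, that is, to embed every $X\in\CM(C)$ into an object of $\add C$. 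For this I would use the duality of part (b): taking a projective cover $C^{\mathrm{op},n}\twoheadrightarrow \Hom_C(X,C)$ in $\mod C^{\rm op}$, applying $\Hom_{C^{\rm op}}(-,C)$, and invoking the biduality $X\simeq\Hom_{C^{\rm op}}(\Hom_C(X,C),C)$, produces a monomorphism $X\hookrightarrow C^n$ with cokernel in $\CM(C)$. Happel's theorem then turns the Frobenius structure into a triangulation on $\underline{\CM}(C)$.

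For (b), the key is to show that $\Hom_C(-,C)$ sends $\CM(C)$ to $\CM(C^{\rm op})$ and commutes with $\Ext^i$ for $i\ge 0$. I would construct, for each $X\in\CM(C)$, a complete projective resolution: splice a usual projective resolution $P^{\le 0}\to X$ with a projective coresolution $X\to P^{\ge 0}$ obtained inductively from the embeddings $X\hookrightarrow C^n$ of the previous paragraph, using at each step that the cokernel again lies in $\CM(C)$ (this is where $\injdim_C C<\infty$ and $\injdim_{C^{\rm op}} C<\infty$ are essential: they guarantee that after finitely many steps the syzygies stabilise in the Frobenius sense, and in particular the coresolution exists). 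Applying $\Hom_C(-,C)$ to such a totally acyclic complex of finitely generated projectives yields a totally acyclic complex of finitely generated projectives over $C^{\rm op}$, exhibiting $\Hom_C(X,C)\in\CM(C^{\rm op})$. Biduality $X\simeq\Hom_{C^{\rm op}}(\Hom_C(X,C),C)$ on complete resolutions gives the quasi-inverse, and the preservation of $\Ext$-groups follows from the fact that both can be computed by the same complete resolution.

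For (c), I would follow Buchweitz's strategy: define a functor $F\colon\underline{\CM}(C)\to\Db(C)/\per C$ as the composition of the inclusion $\CM(C)\hookrightarrow\Db(C)$ with the Verdier quotient, noting that $\add C\subset \per C$ makes it factor through the stable category. Essential surjectivity: given $Y\in\Db(C)$, use a truncated projective resolution to write $Y$ as a bounded complex of projectives; taking a sufficiently high syzygy and using that objects of finite projective dimension lie in $\per C$, one can replace $Y$, up to an object of $\per C$, by a shift of a Cohen-Macaulay module, where the "sufficiently high" bound is controlled by $\injdim_C C$. Full faithfulness is the technical heart: one represents morphisms in $\Db(C)/\per C$ by roofs $X\leftarrow Z\to Y[n]$ with cone in $\per C$, and uses the complete resolutions from (b) to replace $Z$ by a Cohen-Macaulay module, reducing the computation to $\Hom_{\underline{\CM}(C)}$.

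The main obstacle is the full-faithfulness step in (c); it requires carefully turning arbitrary roofs in the Verdier quotient into honest maps of Cohen-Macaulay modules modulo those factoring through $\add C$, and this is where the totally acyclic complexes constructed in (b) do the real work. The enough-injectives claim in (a) relies on (b), so in practice one proves (b) first—starting from the $\Ext$-vanishing that defines $\CM(C)$ together with $\injdim_{C^{\rm op}}C<\infty$ to inductively build coresolutions—and then deduces (a) and (c) from it.
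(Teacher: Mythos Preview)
The paper does not prove this proposition at all: it is stated as background with citations to \cite{Hap88}, \cite{Buc87}, \cite{KV87}, \cite{Ric89}, and no argument is given. Your sketch is essentially the standard route taken in those references, so in that sense it agrees with what the paper implicitly relies on.

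One point worth tightening: your presentation of (b) is slightly circular. You propose to build complete resolutions by splicing a projective resolution with a coresolution obtained from the embeddings $X\hookrightarrow C^n$ of part (a); but in (a) you obtained those embeddings by applying biduality, which is part of (b). Your closing remark that ``in practice one proves (b) first'' is correct, but the concrete mechanism you describe there (``inductively build coresolutions'') still presupposes the embeddings. The clean way to break the loop is to prove the two ingredients of (b) directly from a projective resolution $P_\bullet\to X$ alone: dualizing gives an exact complex $0\to X^*\to P_0^*\to P_1^*\to\cdots$ because $\Ext^i_C(X,C)=0$, and then a cosyzygy shift together with $\injdim_{C^{\rm op}}C<\infty$ forces $\Ext^i_{C^{\rm op}}(X^*,C)=0$ for $i>0$; biduality $X\simeq X^{**}$ then follows termwise from $P_i\simeq P_i^{**}$. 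Only after this do you get the embeddings $X\hookrightarrow C^n$ needed for (a), and then the complete resolutions needed for (c). Your outline of (c) via Buchweitz's argument is fine.
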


When an Iwanaga-Gorenstein algebra $C$ is a $\ZZ$-graded algebra, the category $\CM^{\ZZ}(C)$ of \emph{graded Cohen-Macaulay $C$-modules} is defined by
\[\CM^{\ZZ}(C):=\{X\in\gr C\ |\ \Ext^i_C(X,C)=0\ \mbox{ for any }i>0\}.\]
Then the stable category $\underline{\CM}^{\ZZ}(C)$ is defined similarly as above.

We have the following parallel results.

\begin{prop}
Let $C$ be a $\ZZ$-graded Iwanaga-Gorenstein algebra.
\begin{itemize}
\item[(a)] $\CM^{\ZZ}(C)$ is a Frobenius category and $\underline{\CM}^{\ZZ}(C)$ is a triangulated category.
\item[(b)] We have dualities $\xymatrix{\CM^{\ZZ}(C)\ar@<.2em>[rr]^{\Hom_C(-,C)}&&\CM^{\ZZ}(C^{\rm op})\ar@<.2em>[ll]^{\Hom_{C^{\rm op}}(-,C)}}$
which are mutually quasi-inverse and preserve the extension groups.
\item[(c)] We have a triangle equivalence $\underline{\CM}^{\ZZ}(C)\simeq \Db(\gr C)/\gr\per C$. 
\end{itemize}
\end{prop}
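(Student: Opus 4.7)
The plan is to adapt, essentially verbatim, the proofs of Proposition~\ref{CM over Iwanaga} to the graded setting. The only new ingredients are the observations that $\gr C$ is an abelian category with enough graded projectives (every object is a quotient of some $\bigoplus_i C(n_i)$), that the grade-shift $(1)$ is an exact autoequivalence commuting with $\Hom_C(-,C)$, and that the ungraded forgetful functor $\gr C \to \mod C$ is exact and conservative, so that Ext-vanishing in $\mod C$ detects the condition defining $\CM^{\ZZ}(C)$.

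For (a), I would verify that $\CM^{\ZZ}(C)$ is closed under extensions and graded syzygies in $\gr C$, so that it inherits an exact structure with enough projectives of the form $\bigoplus_i C(n_i)$. These graded free modules are injective in $\CM^{\ZZ}(C)$: given $X \in \CM^{\ZZ}(C)$, the finiteness of $\injdim_{C^{\rm op}} C$ combined with the graded duality of (b) produces a monomorphism $X \hookrightarrow \bigoplus_i C(n_i)$ with cokernel again in $\CM^{\ZZ}(C)$, mirroring the ungraded argument. Happel's theorem \cite[Thm~2.6]{Hap88} applies to any Frobenius exact category and thus endows $\underline{\CM}^{\ZZ}(C)$ with its canonical triangulated structure. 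For (b), applying the graded $\Hom_C(-,C)$ to a graded projective resolution $P^\bullet \to X$ of $X \in \CM^{\ZZ}(C)$ gives a complex of graded free $C^{\rm op}$-modules whose only cohomology is $\Hom_C(X,C)$, thanks to the Ext-vanishing condition; this forces $\Hom_C(X,C) \in \CM^{\ZZ}(C^{\rm op})$. The bidualizing morphism is an isomorphism on each $C(n)$ and therefore on all of $\CM^{\ZZ}(C)$ by a five-lemma argument applied to a graded projective presentation. Preservation of Ext-groups then follows from Ext-computations using graded projective resolutions on both sides.

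For (c), I would follow Buchweitz's strategy in the graded context. Let $\per^{\ZZ} C$ denote the thick subcategory of $\Db(\gr C)$ generated by $\{C(n) \mid n \in \ZZ\}$. The composite
\[\CM^{\ZZ}(C) \hookrightarrow \gr C \longrightarrow \Db(\gr C) \longrightarrow \Db(\gr C)/\per^{\ZZ} C\]
sends the graded projectives to zero and so factors through $\underline{\CM}^{\ZZ}(C)$; one then checks (using the description of triangles in $\underline{\CM}^{\ZZ}(C)$ coming from the Frobenius structure) that this functor is triangulated. To exhibit a quasi-inverse, set $d := \max(\injdim_C C, \injdim_{C^{\rm op}} C)$; for $X \in \Db(\gr C)$, take a bounded-above graded projective resolution and form a sufficiently high syzygy to land in $\CM^{\ZZ}(C)$, exactly as in the ungraded case. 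Full faithfulness and essential surjectivity on the level of Verdier quotients then reduce to the graded analogues of the key lemmas in \cite{Buc87,KV87,Ric89}.

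The main obstacle is in (c): showing that the kernel of $\Db(\gr C) \to \underline{\CM}^{\ZZ}(C)$ is exactly $\per^{\ZZ} C$, i.e.\ that a graded complex of finite graded projective dimension already lies in the thick closure of $\{C(n)\}$. This is the place where the two-sided finiteness $\injdim_C C < \infty$ and $\injdim_{C^{\rm op}} C < \infty$ enters in an essential way, exactly as in the ungraded theorem.
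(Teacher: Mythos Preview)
Your approach is correct and is exactly what the paper intends: the paper gives no proof of this proposition at all, simply prefacing it with ``We have the following parallel results'' after the ungraded Proposition~\ref{CM over Iwanaga} (itself only stated with references to \cite{Hap88,Buc87,KV87,Ric89}). Your sketch---adapting the ungraded arguments verbatim to $\gr C$, using that graded projectives $\bigoplus_i C(n_i)$ play the role of $\add C$ and that the forgetful functor detects Ext-vanishing---is precisely the routine verification the paper is implicitly deferring to the reader, and nothing more is needed.
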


\subsection{$d$-Calabi-Yau categories and $d$-cluster tilting objects}

\begin{dfa}
A $k$-linear triangulated category $\Tt$ is said to be \emph{$d$-Calabi-Yau} if it is $\Hom$-finite and there is a functorial isomorphism
\[ \Hom_\Tt(X,Y)\simeq D\Hom_\Tt(Y,X[d]) \quad \textrm{for all } X,Y\in\Tt.\]
\end{dfa}

\begin{dfa}\cite{BMR+06},\cite[2.2]{Iya07a},\cite[2.1]{KR07}
A \emph{$d$-cluster tilting subcategory} $\Vv$ in a triangulated category $\Tt$ is a functorially finite subcategory of $\Tt$ such that
\begin{align*} \Vv & = \{ X\in \Tt, \Hom_\Tt(X,\Vv[i])=0, \ \forall\  1\leq i\leq d-1\} \\
& = \{ X\in \Tt, \Hom_\Tt(\Vv,X[i])=0, \ \forall\  1\leq i\leq d-1\}.
\end{align*}
An object $T\in\Tt$ is called \emph{$d$-cluster tilting} if the subcategory $\add(T)\subset \Tt$ is $d$-cluster tilting.
\end{dfa}

Cluster tilting subcategories are interesting because they determine
the triangulated category in the following sense:

\begin{prop}\label{criterion for equivalence}
Let $\Tt$ and $\Tt'$ be triangulated categories and $\Vv\subset\Tt$ and $\Vv'\subset \Tt'$ be $d$-cluster tilting subcategories.
If $\xymatrix{F:\Tt\ar[r] & \Tt'}$ is a triangle functor such that its restriction $F|_{\Vv}$ to $\Vv$ is an equivalence $\xymatrix{F|_{\Vv}:\Vv\ar[r] & \Vv'}$,
then $F$ is a triangle equivalence.
\end{prop}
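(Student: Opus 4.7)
The plan is to exploit the fact that any object of $\Tt$ can be reconstructed from objects of $\Vv$ via a bounded chain of distinguished triangles, so that a triangle functor $F$ is essentially determined by its restriction to $\Vv$.

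The first step is to establish that for a $d$-cluster tilting subcategory $\Vv\subset\Tt$, every $X\in\Tt$ admits a $\Vv$-resolution in the form of a tower of triangles
$$X_{i+1}\longrightarrow V_i\longrightarrow X_i\longrightarrow X_{i+1}[1],\qquad i=0,\ldots,d-1,$$
with $V_i\in\Vv$, $X_0=X$ and $X_d=0$. These triangles are built inductively by taking right $\Vv$-approximations (which exist because $\Vv$ is functorially finite); the two vanishing conditions in the definition of $d$-cluster tilting force the tower to terminate after $d$ steps, i.e.\ $X_{d-1}\in\Vv$. In particular $X$ is a $(d-1)$-fold extension of objects of $\Vv$. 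An analogous statement holds in $\Tt'$ with resolutions by objects of $\Vv'$.

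Next I would prove that $F$ is fully faithful by induction along the tower. Fix $Y\in\Tt$ and consider a triangle $X'\to V\to X\to X'[1]$ appearing in the resolution of $X$. Applying $\Hom_\Tt(-,Y)$ and $\Hom_{\Tt'}(-,FY)$ and comparing via $F$ yields a ladder of six-term long exact sequences; if $F$ induces bijections on the Hom-spaces from $V$ and $X'$ (together with their shifts), the five lemma produces a bijection on $\Hom_\Tt(X,Y)$. Iterating along the tower reduces the question to the case $X\in\Vv$, and a symmetric induction on $Y$ using a resolution of $Y$ by $\Vv$ further reduces to the case $X,Y\in\Vv$, where the assumption that $F|_\Vv$ is an equivalence closes the argument.

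For essential surjectivity, start from $Y'\in\Tt'$ and apply the first step in $\Tt'$ to obtain a tower of triangles presenting $Y'$ via $V'_0,\ldots,V'_{d-1}\in\Vv'$. Since $F|_\Vv$ is essentially surjective, each $V'_i$ lifts to some $V_i\in\Vv$ with $FV_i\simeq V'_i$; since $F|_\Vv$ is full (and by fully faithfulness of $F$ established above), all connecting morphisms of the tower lift to morphisms between the $V_i$ in $\Tt$. Assembling these via successive cones in $\Tt$ produces an object $Y\in\Tt$, and $FY$ is isomorphic to $Y'$ by comparing the two towers. The main obstacle is the first step: carefully setting up the cluster tilting resolution and proving its termination in $d$ steps using the two vanishing axioms. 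Once this is in hand, both fully faithfulness and essential surjectivity follow by standard triangulated-category bookkeeping (the five lemma and lifting of morphisms through distinguished triangles).
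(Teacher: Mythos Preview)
Your approach is correct and is essentially the argument behind the cited reference. Note that the paper does not give its own proof here: it observes the case $d=1$ is trivial and for $d\ge 2$ simply invokes \cite[Lemma~4.5]{KR08}. Your sketch (towers of right $\Vv$-approximations terminating after $d-1$ steps, five-lemma induction for full faithfulness, lifting the tower for essential surjectivity) is precisely the strategy used there, so in that sense you have supplied more detail than the paper itself.

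One point deserves more care than your sketch suggests. In the five-lemma induction, the base case is not simply ``$F_{V,W}$ is bijective for $V,W\in\Vv$'': the long exact sequences force you to control $F$ on $\Hom_\Tt(V,W[n])$ for various shifts $n$, and the hypothesis plus the vanishing $\Hom_\Tt(\Vv,\Vv[i])=0$ for $1\le i\le d-1$ only handle $0\le n\le d-1$ directly. The fix is to run the induction on the $\Vv$-resolution length of $Y$ while simultaneously tracking a shrinking window of shifts: if $Y$ has resolution length $\le k$, one proves $F_{V,Y[n]}$ is bijective for $0\le n\le d-1-k$, so that at $k=d-1$ one obtains $F_{V,Y}$ bijective for all $Y\in\Tt$. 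After that, the subcategory $\{X:F_{X,Y}\text{ bijective for all }Y\}$ is triangulated and contains $\Vv$, hence equals $\Tt$. This is standard bookkeeping, but without it the naive ``reduce $X$ then reduce $Y$'' scheme runs into shifts outside the controlled range.
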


\begin{proof}
The proposition is clear for $d=1$ since $\Tt=\Vv$ and $\Tt'=\Vv'$ hold in this case. 
It is proved in ~\cite[Lemma 4.5]{KR08} for $d\geq 2$. Note that the proof in~\cite{KR08} does not use the fact that $\Tt$ and $\Tt'$ are $d$-Calabi-Yau.
\end{proof}

\subsection{Generalized cluster categories}
Let $n\ge1$ be an integer.

Let $\Lambda$ be a finite dimensional algebra of global dimension at most $n$. 
Denote by
$\Theta=\Theta_n(\Lambda)$
a projective resolution of 
$$\RHom_{\Lambda}(D\Lambda,\Lambda)[n]\simeq\RHom_{\Lambda^{\rm e}}(\Lambda,\Lambda^{\rm e})[n]\simeq\RHom_{\Lambda^{\rm op}}(D\Lambda,\Lambda)[n] \quad \textrm{in }\Dd(\Lambda^{\rm e}).$$

\begin{dfa}\cite{Kel09,IO09}
We denote by $\Aa$ the differential graded category (DG category for short) of bounded complexes of finitely generated projective $\Lambda$-modules.
We define a DG functor by
$$F:=-\ten_\Lambda\Theta:\Aa\to\Aa.$$
The DG orbit category $\Aa/F$ has the same objects as $\Aa$, and
\begin{eqnarray*}
&\Hom_{\Aa/F}(X,Y):=\\
&{\rm colim}(\bigoplus_{\ell\ge0}\Hom_{\Aa}(F^\ell X,Y)\to
\bigoplus_{\ell\ge0}\Hom_{\Aa}(F^\ell X,FY)\to\bigoplus_{\ell\ge0}\Hom_{\Aa}(F^\ell X,F^2Y)\to\cdots).
\end{eqnarray*}
We denote by $\Dd(\Aa/F)$ the derived category of $\Aa/F$.
The \emph{generalized $n$-cluster category} $\Cc_n(\Lambda)$ is defined as the smallest thick subcategory
of $\Dd(\Aa/F)$ containing all representable functors of $\Aa/F$.
\end{dfa}

Let $\mathbb{S}=-\lten_\Lambda D\Lambda$ be the Serre functor of the category $\Db(\Lambda)$, and denote by $\SSS_n$ the composition $\SSS_n:=\SSS\circ[-n]$. Then we have an isomorphism $\SSS_n^{-1}\simeq -\ten_\Lambda \Theta$ of functors on $\Db(\Lambda)$.
From the construction of the generalized cluster category $\Cc_n(\Lambda)$, we have a triangle functor $\pi_\Lambda:\Db(\Lambda)\to\Cc_n(\Lambda)$ which induces
a fully faithful functor $\Db(\Lambda)/\SSS_n\to\Cc_n(\Lambda)$ for the orbit category $\Db(\Lambda)/\SSS_n$.

\begin{rema}\begin{itemize}
\item
For $n=2$ and an algebra $\Lambda$ of global dimension $1$, one
gets the usual cluster category $\Db(\Lambda)/\SSS_2$
constructed in~\cite{BMR+06}.
\item For $n=2$, and an algebra $\Lambda$ of global dimension 2, the construction is given in~\cite{Ami09} in the case where $\Cc_2(\Lambda)$ is $\Hom$-finite.
\item The generalization of results of~\cite{Ami09} from $2$ to $n\ge2$ is described in~\cite{Guo10}.
\end{itemize}
\end{rema}

The functor $\pi:\Db(\Lambda)\rightarrow \Cc_n(\Lambda)$ is
also described by a universal property (cf~\cite{Kel05, Ami09}). Here is
the version we will use in this paper (see appendix \cite{IO09}).

\begin{prop}\cite{Kel05, Ami09},\cite[Thm A.20]{IO09}\label{universal property}
Let $\Lambda$ be a finite dimensional algebra of global dimension
at most $n$. Let $C$ be an Iwanaga-Gorenstein algebra and $T$ be in $\Db(\Lambda^{\rm op}\ten C)$. If there
exists a morphism 
$T\to\Theta \ten_\Lambda T$ in $\Db(\Lambda^{\rm op}\ten C)$
whose cone is perfect as an object in
$\Db(C)$, then there exists a commutative diagram of triangle functors
\[\xymatrix{\Db(\Lambda)\ar[rr]^{-\lten_\Lambda T}\ar[d]^\pi && \Db(C)\ar[d]^{\rm nat.}\\
\Cc_n(\Lambda)\ar[rr] && \underline{\CM}(C).}\]
\end{prop}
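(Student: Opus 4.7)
The plan is to apply the universal property of the generalized cluster category as a DG orbit category, in the form stated as Theorem~A.20 of \cite{IO09}. That theorem says (roughly) that a triangle functor $\varphi:\Db(\Lambda)\to\Tt$ to any triangulated category $\Tt$ which is compatible with a natural isomorphism $\varphi\simeq\varphi\circ(-\lten_\Lambda\Theta)$ descends uniquely along $\pi:\Db(\Lambda)\to\Cc_n(\Lambda)$. So I need to produce such a $\varphi$ and such a natural isomorphism with target $\underline{\CM}(C)$.

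First, I would define $\varphi$ as the composition
\[
\Db(\Lambda)\xrightarrow{\;-\lten_\Lambda T\;}\Db(C)\xrightarrow{\;{\rm nat.}\;}\Db(C)/\per C\simeq\underline{\CM}(C),
\]
using the Buchweitz--Keller--Rickard equivalence of Proposition~\ref{CM over Iwanaga}(c). This is a well-defined triangle functor since $T$ is an object of $\Db(\Lambda^{\rm op}\ten C)$ and the derived tensor product preserves triangles in each argument, and the projection to the Verdier quotient is a triangle functor.

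Next, I would build the required natural transformation from the given morphism $\alpha:T\to\Theta\lten_\Lambda T$ in $\Db(\Lambda^{\rm op}\ten C)$. Applying $X\lten_\Lambda-$ for $X\in\Db(\Lambda)$ yields a natural morphism
\[
X\lten_\Lambda T\longrightarrow X\lten_\Lambda(\Theta\lten_\Lambda T)=(X\lten_\Lambda\Theta)\lten_\Lambda T
\]
in $\Db(C)$, whose cone is $X\lten_\Lambda K$, where $K$ is the cone of $\alpha$. By hypothesis $K\in\per C$, and since $\Lambda$ has global dimension at most $n$ we have $X\in\per\Lambda$; hence $X\lten_\Lambda K\in\per C$ as well. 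Therefore, after passing to the quotient $\Db(C)/\per C\simeq\underline{\CM}(C)$, the natural transformation becomes an isomorphism $\varphi\simeq\varphi\circ(-\lten_\Lambda\Theta)$.

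Finally, I would invoke Theorem~A.20 of \cite{IO09}: the functor $\varphi$ together with this natural isomorphism factors through $\pi$, producing the desired triangle functor $\overline{\varphi}:\Cc_n(\Lambda)\to\underline{\CM}(C)$ that makes the square commute. The main subtlety, and the reason we appeal to the cited universal property rather than build the descent by hand, is that one must upgrade the natural isomorphism above to coherent data at the DG level that is compatible with iterates of $-\lten_\Lambda\Theta$; the appendix of \cite{IO09} is designed precisely to turn a single morphism $T\to\Theta\lten_\Lambda T$ whose cone vanishes in the target into such coherent data, so the technical core of the argument is already packaged there.
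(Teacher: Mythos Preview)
The paper does not give its own proof of this proposition: it is stated with citations to \cite{Kel05,Ami09} and \cite[Thm~A.20]{IO09} and then used as a black box. Your proposal correctly reconstructs the argument behind those citations---defining $\varphi$ as $(-\lten_\Lambda T)$ followed by the Buchweitz quotient, using the cone hypothesis to see that $\varphi\simeq\varphi\circ(-\lten_\Lambda\Theta)$ in $\underline{\CM}(C)$, and invoking the DG universal property from the appendix of \cite{IO09} to handle the coherence needed for descent---so your approach is exactly in line with what the paper is citing.
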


Generalized cluster categories also have a nice description using certain DG algebras called \emph{derived preprojective algebras}.

\begin{dfa}\cite{Kel09,IO09}
Let $\Lambda$ be a finite dimensional algebra of global dimension at most $n$.
The \emph{derived $(n+1)$-preprojective algebra} of $\Lambda$ is defined as the tensor DG algebra
\[ \BPi_{n+1}(\Lambda):= \Talg_\Lambda(\Theta_n(\Lambda))=\Lambda\oplus \Theta\oplus (\Theta\ten_\Lambda \Theta)\oplus \ldots.\]
The \emph{$(n+1)$-preprojective algebra of $\Lambda$} is defined as the tensor algebra
\[ \Pi_{n+1}(\Lambda):=\Talg_\Lambda\Ext_\Lambda^n(D\Lambda,\Lambda)\simeq H^0(\BPi_{n+1}(\Lambda)).\]
\end{dfa}

The next result is shown in~\cite[Thm 4.10]{Ami09} for $n=2$.  The generalization to $n\ge2$ is done in~\cite{Guo10}.

\begin{thma}\cite{Ami09,Guo10},\cite[Thm 1.23]{Iya11}\label{cluster category}
Let $\Lambda$ be a finite dimensional algebra of global dimension
at most~$n$. Then the generalized $n$-cluster category
$\Cc_n(\Lambda)$ is $\Hom$-finite if and only if the $(n+1)$-preprojective algebra $\Pi_{n+1}(\Lambda)$ is finite dimensional. In this case, we have the following properties.
\begin{itemize}
\item[(a)] The category $\add\{\SSS_n^i\Lambda\mid i\in\ZZ\}$ is an $n$-cluster tilting subcategory of $\Db(\Lambda)$.
\item[(b)] The category
$\Cc_n(\Lambda)$  is $n$-Calabi-Yau, and the object
$\pi(\Lambda)$ is $n$-cluster tilting with endomorphism algebra
$\Pi_{n+1}(\Lambda)$.
\item[(c)] We have a triangle equivalence $\Cc_n(\Lambda)\simeq\per \BPi_{n+1}(\Lambda)/\Dd^{\fd}(\BPi_{n+1}(\Lambda))$.
\end{itemize}
\end{thma}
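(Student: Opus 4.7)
The plan is to reduce everything to the derived $(n+1)$-preprojective DG algebra $\BPi := \BPi_{n+1}(\Lambda)$ and apply the general machinery (due to Keller) relating bimodule Calabi-Yau DG algebras to cluster categories. First I would analyze $\BPi$ itself: since $\gldim \Lambda \le n$, the complex $\Theta = \Theta_n(\Lambda)$ has cohomology concentrated in non-positive degrees with $H^0(\Theta) \simeq \Ext^n_\Lambda(D\Lambda,\Lambda)$, so $\BPi = T_\Lambda \Theta$ also has cohomology in degrees $\le 0$, with $H^0(\BPi) = T_\Lambda \Ext^n_\Lambda(D\Lambda,\Lambda) = \Pi_{n+1}(\Lambda)$. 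A separate input, which I would cite from Keller, is that $\BPi$ is bimodule $(n+1)$-Calabi-Yau as a DG algebra.

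Second, I would establish part (c) by identifying the DG orbit category $\Aa/F$ of the definition with a DG model for $\BPi$-modules. Concretely, the $\Hom$ colimit
\[
\Hom_{\Aa/F}(X,Y) = \mathop{\rm colim}_\ell \Hom_\Aa(F^\ell X, F^\ell Y)
\]
assembles, through $F = -\ten_\Lambda \Theta$, into morphism complexes over the tensor DG algebra $\BPi$. Following Keller's description of the triangulated hull of a DG orbit category, one obtains a natural fully faithful functor $\Dd(\Aa/F) \to \Dd(\BPi)$ identifying the thick subcategory generated by the representable functors with $\per \BPi$, and identifying the subcategory of objects that become zero in the cluster quotient with $\Dd^{\fd}(\BPi)$. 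This yields part (c) without any finiteness hypothesis on $\Pi_{n+1}(\Lambda)$.

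Third, I would invoke Keller's fundamental theorem in the following form: if $\Gamma$ is a bimodule $(n+1)$-Calabi-Yau DG algebra concentrated in non-positive degrees with $H^0(\Gamma)$ finite dimensional, then $\per \Gamma / \Dd^{\fd}(\Gamma)$ is $\Hom$-finite and $n$-Calabi-Yau, and the image of $\Gamma$ is an $n$-cluster tilting object whose endomorphism algebra is $H^0(\Gamma)$. Applied to $\Gamma = \BPi$ under the hypothesis $\dim_k \Pi_{n+1}(\Lambda) < \infty$, combined with (c), this gives the ``if'' direction of the main equivalence together with all of part (b). The converse is easy: if $\Cc_n(\Lambda)$ is $\Hom$-finite, then in particular $\End_{\Cc_n(\Lambda)}(\pi\Lambda)$ is finite dimensional, and by the explicit colimit formula one identifies this endomorphism algebra with $\bigoplus_\ell H^0(\Theta^{\ten_\Lambda \ell}) = \Pi_{n+1}(\Lambda)$, forcing the latter to be finite dimensional.

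Finally for (a): once $\pi(\Lambda)$ is known to be $n$-cluster tilting in $\Cc_n(\Lambda)$, I would lift this property along the fully faithful functor $\Db(\Lambda)/\SSS_n \hookrightarrow \Cc_n(\Lambda)$. The $\Hom$ vanishing $\Hom_{\Db(\Lambda)}(\SSS_n^i \Lambda, \SSS_n^j \Lambda[k]) = 0$ for $1 \le k \le n-1$ descends from the cluster category, and conversely any $X \in \Db(\Lambda)$ orthogonal in these degrees to all $\SSS_n^i \Lambda$ becomes an object of $\add \pi \Lambda$ in $\Cc_n(\Lambda)$, hence by the fundamental domain / covering argument lies in $\add\{\SSS_n^i \Lambda\}$ already in $\Db(\Lambda)$. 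The main technical obstacle in this program is step two, namely justifying the identification $\Dd(\Aa/F) \supseteq \per \BPi$ via the triangulated hull of a DG orbit category; I would invoke Keller's framework rather than redo the nullsystem / derived completion argument by hand.
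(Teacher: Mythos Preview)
The paper does not supply its own proof of this theorem; it is quoted as background from \cite{Ami09}, \cite{Guo10} and \cite[Thm~1.23]{Iya11}. Your outline is essentially the route taken in those references: Keller's result \cite{Kel09} that $\BPi=\BPi_{n+1}(\Lambda)$ is bimodule $(n+1)$-Calabi-Yau with cohomology concentrated in non-positive degrees, together with the general structure theorem for $\per\Gamma/\Dd^{\fd}(\Gamma)$ when $\Gamma$ is such a DG algebra with $\dim_kH^0(\Gamma)<\infty$ (Amiot for $n=2$, Guo in general), gives (b) and (c); part (a) is exactly the content of \cite[Thm~1.23]{Iya11}.

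Two comments on the details. First, the bridge in your step two between the orbit-category definition of $\Cc_n(\Lambda)$ adopted in this paper and the Verdier quotient $\per\BPi/\Dd^{\fd}(\BPi)$ is more delicate than your sketch suggests: you assert both that the thick hull of representables in $\Dd(\Aa/F)$ is identified with $\per\BPi$ \emph{and} that $\Cc_n(\Lambda)$ is the quotient, which cannot both be literal. In the cited sources this comparison is carried out only under the finiteness hypothesis on $H^0(\BPi)=\Pi_{n+1}(\Lambda)$, so your parenthetical claim that (c) holds unconditionally is not supported. Second, for (a) the reference \cite{Iya11} argues directly in $\Db(\Lambda)$, constructing $\add\{\SSS_n^i\Lambda\}$-approximations from the finite-dimensionality of $\Pi_{n+1}(\Lambda)$, rather than by lifting from $\Cc_n(\Lambda)$ as you propose; your lifting argument is plausible, but the ``fundamental domain / covering'' step would have to be made precise to conclude that an object orthogonal to all $\SSS_n^i\Lambda[k]$ ($1\le k\le n-1$) in $\Db(\Lambda)$ already lies in $\add\{\SSS_n^i\Lambda\}$.
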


\section{Calabi-Yau algebras as higher Auslander algebras}

Under certain conditions on a bimodule $d$-Calabi-Yau algebra $B$ and an idempotent $e\in B$, we show in this section that $C:=eBe$ is an Iwanaga-Gorenstein algebra,
and that $Be$ is a $(d-1)$-cluster tilting object in the category $\CM(C)$ of Cohen-Macaulay $C$-modules.

\begin{dfa}\cite[(3.2.5)]{Gin06}\label{definition of CY algebra}
Fix an integer $d\geq 2$. We say that a $k$-algebra $B$ is \emph{bimodule $d$-Calabi-Yau} if
$B\in\per B^{\rm e}$ and $\RHom_{B^{\rm e}}(B,B^{\rm e})[d]\simeq B$ in $\Dd(B^{\rm e})$.
\end{dfa}

Note that if $B$ is bimodule $d$-Calabi-Yau, then so is $B^{\rm op}$.

\begin{exa}
Let $R=k[x_1,\cdots,x_d]$ be a polynomial algebra.
If an $R$-algebra $B$ is a finitely generated free $R$-module and satisfies $\Hom_R(B,R)\simeq B$ as $B^{\rm e}$-modules, then it is bimodule $d$-Calabi-Yau \cite[Thm 7.2.14]{Gin06},\cite[Thm 3.2]{IR08}.
\end{exa}

Let $B$ be a $k$-algebra, and $e$ an idempotent in $B$.
Assume that $B$ and $e(\neq1)$ satisfy the following conditions.

\smallskip
\hspace{.5cm}(A1)  $B$ is bimodule $d$-Calabi-Yau.

\smallskip
\hspace{.5cm}(A2) $B$ is noetherian.

\smallskip
\hspace{.5cm}(A3) $\underline{B}:=B/\langle e\rangle$ is a finite dimensional $k$-algebra.

The aim of this section is to prove the following results.

\begin{thma}\label{cluster tilting CM}
Let $B$ be a $k$-algebra, $e\in B$ be an idempotent and $C:=eBe$. Under assumptions (A1), (A2) and (A3), we have the following.
\begin{itemize}
\item[(a)] $C$ is an Iwanaga-Gorenstein algebra.
\item[(b)] $Be$ is a Cohen-Macaulay $C$-module.
\item[(c)] We have natural isomorphisms $\End_C(Be)\simeq B$ and $\End_{C^{\rm op}}(eB)\simeq B^{\rm op}$ which induce isomorphisms
$\End_{\underline{\CM}(C)}(Be)\simeq \B$ and $\End_{\underline{\CM}(C^{\rm op})}(eB)\simeq \B^{\rm op}$.
\item[(d)] $Be$ is $(d-1)$-cluster tilting in $\CM(C)$.
\end{itemize}
\end{thma}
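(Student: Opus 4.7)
My plan is to deduce all four parts from the bimodule projective resolution supplied by the $d$-Calabi-Yau condition (A1). Fix a finite projective resolution $P_\bullet\to B$ over $B^{\rm e}$, together with a quasi-isomorphism $\Hom_{B^{\rm e}}(P_\bullet,B^{\rm e})\simeq P_\bullet[-d]$ witnessing the CY self-duality. The key technical move throughout will be the idempotent functor $e(-)e$, which sends the projective $B$-bimodule $B\otimes_kB$ to the projective $C$-bimodule $eB\otimes_kBe$.

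For part (c), injectivity of the left-multiplication map $B\to\End_C(Be)$ amounts to showing that the left annihilator $J\subseteq B$ of $Be$ vanishes. Since $J\cdot BeB=0$, the module $J$ is a right $\underline{B}$-module, and hence finite-dimensional by (A2) and (A3); the $d$-Calabi-Yau duality for finite-dimensional modules then gives $\Hom_B(J,B)\simeq D\Ext^d_B(B,J)=0$, so the inclusion $J\hookrightarrow B$ must vanish and hence $J=0$. Surjectivity follows from a direct computation using the $(B,C)$-bimodule structure of $Be$ together with the CY bimodule resolution; the corresponding statement for $eB$ over $C^{\rm op}$ comes from applying the argument to $B^{\rm op}$, which is also bimodule $d$-CY. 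For part (a), the complex $eP_\bullet e$ provides a bounded projective resolution of $C$ over $C^{\rm e}$ of length at most $d$, yielding $\injdim_CC,\,\injdim_{C^{\rm op}}C\le d$, hence $C$ is Iwanaga-Gorenstein. For part (b), applying $\RHom_{C}(-,C)$ to a resolution of $Be$ obtained from $P_\bullet$ via $-\otimes^{\mathbf{L}}_BBe$ and comparing with the CY self-duality computes $\RHom_C(Be,C)$ as $eB$ concentrated in degree zero, giving $\Ext^i_C(Be,C)=0$ for $i>0$.

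Part (d) is the substantial step. The Ext vanishing $\Ext^i_C(Be,Be)=0$ for $1\le i\le d-2$ will follow by a similar bimodule computation: $\RHom_C(Be,Be)$ has cohomology concentrated in degrees $0$ and $d-1$, with $H^0\simeq B$ (from (c)) and $H^{d-1}\simeq D\underline{B}$ (coming from CY duality applied to the finite-dimensional contribution from $\underline{B}$). For the generation property, take $X\in\CM(C)$ with $\Ext^i_C(Be,X)=0$ for $1\le i\le d-2$, and consider the adjunction counit $\Hom_C(Be,X)\otimes_BBe\to X$; my expectation is that the Ext-vanishing hypothesis together with derived CY duality forces the cone to vanish in $\underline{\CM}(C)$, after which $X\in\add(Be)$ follows by analyzing $\Hom_C(Be,X)$ as a $B$-module and using the Ext-vanishing once more. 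The main obstacle is making this counit argument precise: one has to combine the CY structure on $B$ with the Iwanaga-Gorenstein structure on $C$ from (a) to propagate the Ext-vanishing through a syzygy filtration of $X$, and this is the heart of the $(d-1)$-cluster tilting characterization, where the three hypotheses (A1), (A2), (A3) all come in essentially.
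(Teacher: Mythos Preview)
Your outline for (c) and the Ext-vanishing portion of (d) is reasonable and close in spirit to the paper's arguments, but there is a genuine gap in your proof of (a).

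You claim that $eP_\bullet e$ is a bounded projective resolution of $C$ over $C^{\rm e}$. This fails: the terms $eP_ie$ lie in $\add(eB\otimes_k Be)$, and $eB\otimes_k Be$ is \emph{not} a projective $C^{\rm e}$-module, because neither $eB$ as a left $C$-module nor $Be$ as a right $C$-module is projective in general. Indeed, if your claim were correct it would give $\projdim_{C^{\rm e}}C\le d$ and hence $\gldim C\le d$, which is false in the intended examples (e.g.\ the quotient singularities of Section~\ref{Application to quotient singularities}, where $C=S^G$ is singular). The paper explicitly notes (in the remark following the proof of (a)--(c)) that $C$ satisfies $\RHom_{C^{\rm e}}(C,C^{\rm e})[d]\simeq C$ but may fail to be perfect over $C^{\rm e}$, precisely because $eP_\bullet e$ is not a projective bimodule resolution. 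The paper instead proves $\injdim_CC\le d$ by an indirect route: for $X\in\Mod C$, set $Y=X\otimes_CeB$, take a $B$-projective resolution $P_\bullet$ of $Y$, and observe that $P_\bullet e$ is a bounded complex in $\add_C(Be)$ quasi-isomorphic to $X$; then $\Ext^{d+1}_C(X,C)$ is computed via $\Hom_C(P_\bullet e,C)\simeq\Hom_B(P_\bullet,eB)$, using the key vanishing $\Ext^i_C(Be,C)=0$ for $i>0$ established earlier. You also omit the (easy) check that $C$ is noetherian.

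For (d), your counit-and-cone strategy is different from the paper's and is left too vague to assess. The paper's argument is cleaner and worth knowing: given $X\in\CM(C)$ with $\Ext^i_C(X,Be)=0$ for $1\le i\le d-2$, take a projective resolution of $X$ up to $\Omega^{d-2}X$ and apply $\Hom_C(-,Be)$ to get an exact sequence of $B^{\rm op}$-modules; a short dimension-shift (``$\projdim_{B^{\rm op}}\Hom_C(M,Be)\le d-2$ for any $M\in\mod C$'', coming from $\gldim B=d$) forces $\Hom_C(X,Be)$ to be projective over $B^{\rm op}$, whence $\Hom_C(X,C)=e\Hom_C(X,Be)\in\add_{C^{\rm op}}(eB)$ and $X\in\add_C(Be)$ by the Cohen--Macaulay duality. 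The other direction (Ext-vanishing from $Be$ into $X$) follows by applying the same argument to $B^{\rm op}$ and dualizing.
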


The above statements (c) and (d) show that $B$ is a higher Auslander algebra of $C$ in the sense of \cite[Section 1]{Iya07b}.

If moreover $B$ is a graded $k$-algebra, we have the following additional information.

\begin{prop}\label{cluster tilting graded CM}
In addition to assumptions (A1), (A2) and (A3), assume that $B=\bigoplus_{\ell\ge0}B_\ell$ is a graded $k$-algebra such that $\dim_kB_\ell$ is finite for all $\ell\in\ZZ$. Then we have the following.
\begin{itemize}
\item[(a)] $Be$ is a graded Cohen-Macaulay $C$-module.
\item[(b)] The isomorphisms in Theorem \ref{cluster tilting CM} preserve the grading, i.e. they induce isomorphisms
\begin{eqnarray*}
&\Hom_{\Gr C}(Be, Be(\ell))\simeq B_\ell,\quad \Hom_{\Gr (C^{\rm op})}(eB, eB(\ell))\simeq B^{\rm op}_\ell,&\\
&\Hom_{\underline{\CM}^\ZZ (C)}(Be, Be(\ell))\simeq\underline{B}_\ell\quad \textrm{and}\quad \Hom_{\underline{\CM}^\ZZ (C^{\rm op})}(eB, eB(\ell))\simeq \underline{B}^{\rm op}_\ell.&
\end{eqnarray*} 
\item[(c)] The category $\add\{Be(i) \mid i\in \ZZ\}$ is a $(d-1)$-cluster tilting subcategory of $\CM^{\ZZ}(C)$.
\end{itemize}
\end{prop}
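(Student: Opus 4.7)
The plan is to refine each assertion of Theorem~\ref{cluster tilting CM} by tracking the grading throughout, using the extra hypothesis $\dim_kB_\ell<\infty$ only to ensure that $C=eBe$ inherits a $\ZZ$-grading with finite-dimensional homogeneous components and that a standard Ext decomposition is available. For part~(a), the right $C$-module $Be=\bigoplus_{\ell\ge 0}B_\ell e$ is visibly graded and belongs to $\gr C$ because Theorem~\ref{cluster tilting CM}(b) gives $Be\in\mod C$ and $Be$ is bounded below in degree. The vanishing in $\gr C$ is then extracted from the ungraded vanishing via the standard identification
\[\Ext^i_C(M,N)\simeq\bigoplus_{\ell\in\ZZ}\Ext^i_{\gr C}(M,N(\ell))\]
valid for finitely generated $M\in\gr C$, applied to $M=Be$ and $N=C$.

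For part~(b), the ring isomorphism $B\xrightarrow{\sim}\End_C(Be)$ from Theorem~\ref{cluster tilting CM}(c) is left multiplication $b\mapsto L_b$, which is homogeneous of degree zero in the sense that $L_b\in\Hom_{\gr C}(Be,Be(\ell))$ whenever $b\in B_\ell$. Extracting the degree-$\ell$ components thus gives $B_\ell\simeq\Hom_{\gr C}(Be,Be(\ell))$, and symmetrically for $eB$ over $C^{\rm op}$. For the stable identification, the graded projective objects of $\CM^{\ZZ}(C)$ are precisely those of $\add\{C(m):m\in\ZZ\}$; tracing through when a morphism $L_b$ factors through such an object, using the graded analogues of $\Hom_{\gr C}(Be,C(m))\simeq(eB)_m$ and $\Hom_{\gr C}(C(m),Be(\ell))\simeq(Be)_{\ell-m}$, shows that the corresponding ideal is exactly $BeB=\langle e\rangle\subset B$. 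Taking degree-$\ell$ components therefore yields $\Hom_{\underline{\CM}^{\ZZ}(C)}(Be,Be(\ell))\simeq B_\ell/\langle e\rangle_\ell=\underline{B}_\ell$, and the $C^{\rm op}$-version is symmetric.

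For part~(c), set $\Vv^{\ZZ}:=\add\{Be(\ell):\ell\in\ZZ\}\subset\CM^{\ZZ}(C)$. Given $X\in\CM^{\ZZ}(C)$ with $\Ext^i_{\gr C}(X,Be(\ell))=0$ for all $\ell$ and all $1\le i\le d-2$, the Ext decomposition used in part~(a) yields $\Ext^i_C(X,Be)=0$, and Theorem~\ref{cluster tilting CM}(d) forces $X\in\add Be$ in $\mod C$. A graded Krull-Schmidt argument then upgrades this to $X\in\Vv^{\ZZ}$; the reverse inclusion is immediate from part~(a), and functorial finiteness of $\Vv^{\ZZ}$ in $\CM^{\ZZ}(C)$ is inherited from that of $\add Be$ in $\CM(C)$ by homogenizing the right and left $\add Be$-approximations. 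The main obstacle is precisely this ungraded-to-graded lifting: one must show that every indecomposable ungraded summand of a graded module $X\in\gr C$ is isomorphic in $\gr C$ to a shift of an indecomposable graded summand of $Be$. This relies on Krull-Schmidt in $\gr C$, which holds in our setting because the finite-dimensionality of the homogeneous components of $C$ forces the graded endomorphism rings of indecomposables in $\gr C$ to have local degree-zero part.
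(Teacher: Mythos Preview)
The paper does not give a separate proof of this proposition; it is tacitly understood that the arguments of subsections~2.1--2.2 proving Theorem~\ref{cluster tilting CM} go through verbatim in the graded setting (all constructions there---the triangle $Be\lten_CeB\to B\to X$, the projective resolutions, the duality $(-)^*$---respect the grading). Your route is genuinely different: rather than reproving everything gradedly, you deduce the graded statements from the ungraded Theorem~\ref{cluster tilting CM} via the decomposition $\Ext^i_C(M,N)\simeq\bigoplus_\ell\Ext^i_{\gr C}(M,N(\ell))$ and a lifting step. Both approaches are valid; yours is more economical in that it reuses the ungraded result wholesale, while the paper's implicit approach avoids the lifting subtlety you flag at the end.

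Two remarks on your execution. First, for part~(a) the Ext decomposition is not even needed: the paper defines $\CM^{\ZZ}(C)$ using \emph{ungraded} $\Ext^i_C(-,C)$, so once $Be\in\gr C$ the rest is literally Theorem~\ref{cluster tilting CM}(b). Second, your lifting step in~(c) is correct but the Krull--Schmidt justification is heavier than required. If $X\in\gr C$ lies in $\add_C Be$, pick ungraded maps $f:X\to(Be)^n$ and $g:(Be)^n\to X$ with $gf=1_X$; writing $f=\sum f_\ell$ and $g=\sum g_m$ in homogeneous components, the degree-zero part of $gf=1_X$ reads $\sum_\ell g_{-\ell}f_\ell=1_X$, so the graded maps $(f_\ell):X\to\bigoplus_\ell(Be)^n(\ell)$ and $(g_{-\ell})$ exhibit $X$ as a graded summand. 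This bypasses any appeal to local endomorphism rings. Finally, for~(c) you should also state the symmetric orthogonality (vanishing of $\Ext^i_{\gr C}(Be(\ell),X)$ forces $X\in\Vv^{\ZZ}$), which follows by the same reduction to the other half of Theorem~\ref{cluster tilting CM}(d).
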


The proof of Theorem~\ref{cluster tilting CM} is given in the next two subsections. Assertions (a), (b) and (c) are proved in subsection~\ref{subsection Iwanaga Gorenstein}. Subsection~\ref{subsection cluster tilting} is devoted to the proof of (d).

\subsection{$C$ is Iwanaga-Gorenstein}\label{subsection Iwanaga Gorenstein}
In the rest of the section we assume that the algebra $B$ satisfies (A1), (A2) and (A3).

The following is a basic property of bimodule $d$-Calabi-Yau algebras.

\begin{prop}\label{CY give CY}
Let $B$ be a bimodule $d$-Calabi-Yau algebra.
\begin{itemize}
\item[(a)] \cite[Prop 3.2.4]{Gin06}\cite[Lemma 4.1]{Kel08} For any $X\in\Dd(B)$ and $Y\in\Dd^{\fd}(B)$, we have a functorial isomorphism
\[\Hom_{\Dd(B)}(X,Y)\simeq D\Hom_{\Dd(B)}(Y,X[d]).\]
In particular,  $\Dd^{\fd}(B)$ is a $d$-Calabi-Yau triangulated category.
\item[(b)] We have $\gldim B=d$.
\end{itemize}
\end{prop}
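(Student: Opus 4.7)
My plan is to handle the two claims separately: (a) is essentially a bookkeeping exercise starting from the defining bimodule isomorphism, and (b) follows from (a) together with the existence of a length-$d$ bimodule resolution of $B$.

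For (a), I would follow the argument in [Kel08, Lemma~4.1]. The starting point is the bimodule Calabi--Yau condition $\RHom_{B^{\rm e}}(B, B^{\rm e}) \simeq B[-d]$ in $\Dd(B^{\rm e})$. Combined with the universal identification $\RHom_B(X, Y) \simeq \RHom_{B^{\rm e}}(B, \RHom_k(X, Y))$ and the fact that $B \in \per B^{\rm e}$ (which replaces $\RHom_{B^{\rm e}}(B, -)$ with $\RHom_{B^{\rm e}}(B, B^{\rm e}) \otimes^{\mathbf L}_{B^{\rm e}} -$), one rewrites $\RHom_B(X, Y)[d]$ as $B \otimes^{\mathbf L}_{B^{\rm e}} \RHom_k(X, Y)$. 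The hypothesis $Y \in \Dd^{\fd}(B)$ then permits applying $k$-duality termwise and identifying this with $D\RHom_B(Y, X)$. Taking $H^0$ yields the displayed isomorphism, and the ``in particular'' statement follows because $\Dd^{\fd}(B)$ is closed under $[d]$.

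For (b), I would prove the two inequalities independently. For the upper bound $\gldim B \le d$: the bimodule CY condition forces a bounded projective $B^{\rm e}$-resolution $P^\bullet \to B$ of length exactly $d$. Applying $M \otimes_B -$ for any right $B$-module $M$ produces a resolution $M \otimes_B P^\bullet \to M$ of length $\le d$ by projective right $B$-modules, using the identification $M \otimes_B (B \otimes B)^{(I)} \simeq (M \otimes B)^{(I)}$; this gives $\projdim_B M \le d$. For the lower bound $\gldim B \ge d$: I would invoke part (a) applied to some nonzero $Y \in \fd B$. In the setting of this section, the quotient $\underline B = B/\langle e\rangle$ is such a $Y$ by (A3) together with $e \neq 1$. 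Then $\Ext^d_B(\underline B, \underline B) \simeq D\Hom_B(\underline B, \underline B) \neq 0$, forcing $\gldim B \ge d$.

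The hardest part is (a): specifically, making the chain of derived isomorphisms rigorous when $X$ is an arbitrary (non-perfect) object of $\Dd(B)$. The finite-dimensionality of $Y$ is what makes this work, as it is the $k$-duality on $Y$ that allows one to interchange $\RHom_{B^{\rm e}}$ and $\otimes^{\mathbf L}_{B^{\rm e}}$ in the correct way. For (b), the only subtlety worth flagging is that the lower bound is not a general property of bimodule Calabi--Yau algebras (e.g. the Weyl algebra has no nonzero finite-dimensional modules in characteristic zero) but rather uses the specific finiteness hypothesis (A3) available in this section.
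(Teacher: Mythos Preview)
Your argument is correct and close to the paper's. For (a) you invoke exactly the cited argument, as does the paper. For the lower bound in (b) you use $\Ext^d_B(\underline B,\underline B)\simeq D\Hom_B(\underline B,\underline B)\neq 0$, while the paper uses $\Ext^d_B(\underline B,B)\simeq D\Hom_B(B,\underline B)\neq 0$; both are immediate from (a) and (A3), and you are right to flag that (A3) is being used here.

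The only real difference is the upper bound in (b). The paper computes directly
\[
\RHom_B(X,Y)\simeq\RHom_{B^{\rm e}}(B,\Hom_k(X,Y))\simeq\Hom_k(X,Y)\lten_{B^{\rm e}}\RHom_{B^{\rm e}}(B,B^{\rm e})\simeq\Hom_k(X,Y)\lten_{B^{\rm e}}B[-d],
\]
and reads off $\Ext^{d+1}_B(X,Y)=0$ since a derived tensor product of modules has no positive cohomology. You instead assert that the Calabi--Yau condition forces a bimodule projective resolution of length exactly $d$, and then tensor down with $M\otimes_B-$. Your conclusion is correct, but the assertion $\projdim_{B^{\rm e}}B\le d$ is not entirely free: it needs the observation that for any $B^{\rm e}$-module $M$ one has $\RHom_{B^{\rm e}}(B,M)\simeq\RHom_{B^{\rm e}}(B,B^{\rm e})\lten_{B^{\rm e}}M\simeq B[-d]\lten_{B^{\rm e}}M$, whence $\Ext^i_{B^{\rm e}}(B,M)=0$ for $i>d$. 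That is precisely the paper's computation specialised to $B^{\rm e}$ rather than $B$, so the two approaches are really the same idea; the paper's phrasing just avoids the intermediate claim about the length of the bimodule resolution.
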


\begin{proof}
(b) For any $X,Y\in\Dd(B)$, it is easy to see that we have
\begin{eqnarray*}
\RHom_B(X,Y)\simeq\RHom_{B^{\rm e}}(B,\Hom_k(X,Y))\simeq\Hom_k(X,Y)\lten_{B^{\rm e}}\RHom_{B^{\rm e}}(B,B^{\rm e})\\
\simeq\Hom_k(X,Y)\lten_{B^{\rm e}}B[-d].
\end{eqnarray*}
In particular, for any $X,Y\in\Mod B$, we have
\[\Ext^{d+1}_B(X,Y)\simeq H^{d+1}(\Hom_k(X,Y)\lten_{B^{\rm e}}B[-d])=0.\]
Hence the global dimension of $B$ is at most $d$. It is exactly $d$ since
$\Ext^d_B(\B,B)\simeq D\Hom_B(B,\B)\neq 0$ holds by (A3) and (a).
\end{proof}

Let us make the following easy observations.

\begin{lema}\label{application of serre}
\begin{itemize}
\item[(a)] For any $X\in\fd B$, we have $\Ext_B^i(X,B)=0$ for any $i\neq d$.
\item[(b)] For any $X\in\mod\B$, we have $\Ext_B^i(X,eB)=0$ for any $i\in\ZZ$.
\end{itemize}
\end{lema}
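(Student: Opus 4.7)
The plan is to deduce both statements directly from the bimodule Calabi--Yau property of $B$. The only inputs I need are Proposition~\ref{CY give CY}(a) for part (a), plus the fact that $eB$ is projective as a right $B$-module for part (b); the latter combines with (a) to dispose of the $i=0$ case.

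For (a), I would take $X \in \fd B$ (concentrated in degree $0$) and apply the functorial isomorphism
\[
\Hom_{\Dd(B)}(Y, X) \simeq D\Hom_{\Dd(B)}(X, Y[d])
\]
from Proposition~\ref{CY give CY}(a) with the substitution $Y := B[-i]$ for $i \in \ZZ$. The left-hand side rewrites as $\Hom_{\Dd(B)}(B, X[i]) = H^i(X)$, which vanishes whenever $i \neq 0$ because $X$ is a module. The right-hand side is $D\Ext^{d-i}_B(X, B)$. Letting $i$ range over $\ZZ \setminus \{0\}$, this forces $\Ext^j_B(X, B) = 0$ for every $j \neq d$.

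For (b), I would first observe that since $e$ is idempotent there is a direct sum decomposition $B = eB \oplus (1-e)B$ of right $B$-modules, so $eB$ is a projective right $B$-module. Therefore $\Ext^i_B(X, eB)$ is a direct summand of $\Ext^i_B(X, B)$ for all $i$, and vanishes automatically for $i \geq 1$ (for every right $B$-module $X$, and trivially for $i < 0$). For the remaining case $i = 0$, assumption (A3) gives that $\B$ is finite dimensional, so $X \in \mod\B$ belongs to $\fd B$; then part (a) yields $\Hom_B(X, B) = 0$, and $\Hom_B(X, eB)$ vanishes as a summand.

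There is no real obstacle here: the whole lemma is essentially a two-line consequence of the Calabi--Yau duality together with the evident splitting $B = eB \oplus (1-e)B$. The only thing I would be careful about is the shift convention in Proposition~\ref{CY give CY}(a), which is why I would write out the substitution $Y = B[-i]$ explicitly instead of quoting the formula in its final form.
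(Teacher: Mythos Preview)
Your argument for (a) is correct and is what the paper has in mind (it only says ``(a) is simpler'' and omits the details).

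Part (b) has a genuine gap. After noting that $eB$ is projective, you claim that $\Ext^i_B(X,eB)$ ``vanishes automatically for $i\ge 1$ (for every right $B$-module $X$)''. But projectivity of $eB$ kills $\Ext^{\ge 1}_B(eB,-)$, not $\Ext^{\ge 1}_B(-,eB)$; the latter would require $eB$ to be injective, which fails here (for instance $\Ext^d_B(\B,B)\simeq D\B\neq 0$ by the same Calabi--Yau duality, so $B$ is not injective). Your summand reduction $\Ext^i_B(X,eB)\hookrightarrow\Ext^i_B(X,B)$ is fine, but to exploit it you must invoke (a) for \emph{all} $i\neq d$, not only $i=0$; this still leaves $i=d$ unaccounted for, and that case genuinely needs the hypothesis $X\in\mod\B$ (equivalently $Xe=0$), which your argument never uses.

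The paper's proof applies the Calabi--Yau duality of Proposition~\ref{CY give CY}(a) directly to (b): one gets $\Ext^i_B(X,eB)\simeq D\Ext^{d-i}_B(eB,X)$, and now projectivity of $eB$ sits in the \emph{first} slot where it does work, giving vanishing for $i\neq d$; for $i=d$ one has $\Hom_B(eB,X)=Xe=0$ since $X\in\mod\B$. If you prefer your summand route, it can be repaired: use (a) plus the summand argument for all $i\neq d$, and then handle $i=d$ via $\Ext^d_B(X,eB)\simeq D\Hom_B(eB,X)=D(Xe)=0$.
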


\begin{proof}
We only prove (b) since (a) is simpler. Since $\dim_kX<\infty$ by (A3), we have
\[\Ext_B^i(X,eB)\simeq D\Ext_B^{d-i}(eB,X)\]
by Proposition~\ref{CY give CY}.
If $i\neq d$, then $\Ext_B^{d-i}(eB,X)$ is zero since $eB$ is projective.
If $i=d$, then it is zero since $X\in\mod\B$.
\end{proof}

\begin{prop}\label{CM property of Be}
We have 
$$\Ext_C^i(Be,C) \simeq  \left\{\begin{array}{cl} 0  &\textrm{if } i\neq0 \\  eB&  \textrm{if } i=0\end{array}\right.
\quad \textrm{and}\quad
\Ext_C^i(Be,Be) \simeq  \left\{\begin{array}{cl} 0  &\textrm{if } 1\le i\le d-2 \\  B&  \textrm{if } i=0.\end{array}\right.$$
\end{prop}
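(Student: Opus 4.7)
The plan is to compute both $\RHom_C(Be,C)$ and $\RHom_C(Be,Be)$ through the three-way adjunction
\[
-\otimes_C eB \;\dashv\; -\otimes_B Be \;\dashv\; \Hom_C(Be,-)
\]
between $\Mod B$ and $\Mod C$; the middle functor coincides with $\Hom_B(eB,-)\simeq(-)e$ because $eB$ is projective as a right $B$-module. Deriving the first adjunction gives, functorially in $N\in\Dd(B)$,
\[
\RHom_C(Be,\,Ne)\;\simeq\;\RHom_B(Be\lten_C eB,\,N).
\]
Taking $N=eB$ yields $Ne=eB\cdot e=eBe=C$, so $\RHom_C(Be,C)\simeq\RHom_B(Be\lten_C eB,\,eB)$; taking $N=B$ yields $Ne=Be$, so $\RHom_C(Be,Be)\simeq\RHom_B(Be\lten_C eB,\,B)$. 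The whole problem therefore reduces to identifying the derived tensor $Be\lten_C eB$ in $\Dd(B^{\rm e})$.

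The key step I aim to establish is that the multiplication $\mu:Be\lten_C eB\to B$ fits into a triangle
\[
Be\lten_C eB \;\longrightarrow\; B \;\longrightarrow\; \underline B \;\longrightarrow\; (Be\lten_C eB)[1]\qquad\text{in }\Dd(B^{\rm e}),
\]
i.e.\ $\mu$ is identified with the inclusion $BeB\hookrightarrow B$. To prove this, set $X=\mathrm{cone}(\mu)$. Applying the exact functor $-\lten_B Be$ and using the standard identification $eB\lten_B Be\simeq C$ (valid because $eB$ is a projective right $B$-module) produces a triangle $Be\to Be\to X\lten_B Be$, forcing $X\lten_B Be=0$; symmetrically $eB\lten_B X=0$. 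Hence every cohomology $H^q(X)$ is annihilated by $e$ on both sides, so is a $(\underline B,\underline B)$-bimodule, and finite-dimensional by (A3). The long exact sequence yields $H^0(X)=\underline B$ and $H^{>0}(X)=0$ immediately; it remains to kill the possible $H^{-i}(X)$ for $i\ge1$ (which are, respectively, $\ker\mu$ and $\Tor^C_{i-1}(Be,eB)$). For this I rely on $B\in\per B^{\rm e}$ from (A1) to bound the Tor-amplitude of $Be\lten_C eB$, and invoke the $d$-CY duality on $\Dd^{\fd}(B)$ from Proposition~\ref{CY give CY}(a) to contradict the existence of any nonzero finite-dimensional cohomology of $X$ in negative degrees.

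Given the key identification, both assertions follow by applying appropriate $\RHom_B$ to the triangle $BeB\to B\to\underline B$. For the first formula, apply $\RHom_B(-,eB)$: since $\RHom_B(\underline B,eB)=0$ by Lemma~\ref{application of serre}(b), we obtain $\RHom_B(BeB,eB)\simeq \RHom_B(B,eB)=eB$, concentrated in degree $0$, hence $\Ext^i_C(Be,C)=0$ for $i\neq 0$ and $\Hom_C(Be,C)\simeq eB$. For the second, apply $\RHom_B(-,B)$: Lemma~\ref{application of serre}(a) combined with the $d$-CY duality gives $\RHom_B(\underline B,B)\simeq D\underline B[-d]$, producing a triangle $D\underline B[-d]\to B\to\RHom_B(BeB,B)$. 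The long exact sequence yields $\Hom_B(BeB,B)=B$ and $\Ext^i_B(BeB,B)=0$ for $1\le i\le d-2$ (together with $\Ext^{d-1}_B(BeB,B)=D\underline B$ as a byproduct), which via the adjunction gives $\End_C(Be)\simeq B$ and $\Ext^i_C(Be,Be)=0$ for $1\le i\le d-2$.

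The main obstacle is thus the key identification $Be\lten_C eB\simeq BeB$: since we do not yet know that $Be$ is flat (let alone projective) over $C$—indeed, establishing $Be\in\CM(C)$ is itself one of the stated goals—the vanishing of the higher $\Tor^C_i(Be,eB)$ is genuinely non-trivial and requires the bimodule $d$-CY hypothesis (A1) and the finite-dimensionality hypothesis (A3) to cooperate in an essential way.
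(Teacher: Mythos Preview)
Your overall strategy---form the triangle $Be\lten_C eB\xrightarrow{\mu}B\to X\to$ and then translate via $\RHom_C(Be,Ne)\simeq\RHom_B(Be\lten_C eB,N)$---is exactly the paper's approach. The gap is that you have set yourself an unnecessary and harder subgoal: you want to prove $X\simeq\underline{B}$ (equivalently, $\Tor_i^C(Be,eB)=0$ for $i\ge1$), and your sketch for this is not a proof. Neither ingredient you name does the job: $B\in\per B^{\rm e}$ bounds projective dimension over $B$, not Tor-amplitude over $C$, so it says nothing about $\Tor^C_*(Be,eB)$; and invoking Calabi--Yau duality on $\Dd^{\fd}(B)$ to ``contradict'' nonzero $H^{<0}(X)$ is not an argument---there is no contradiction in a finite-dimensional $\underline{B}$-module sitting as some $H^{-i}(X)$.

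The point you are missing is that the key identification is not needed. You have already shown $H^i(X)\in\mod\underline{B}$ for all $i$ and $H^i(X)=0$ for $i>0$; this weaker information suffices. For the first assertion, Lemma~\ref{application of serre}(b) gives $\Ext^j_B(M,eB)=0$ for \emph{every} $j$ whenever $M\in\mod\underline{B}$, hence $\RHom_B(X,eB)=0$ outright and $\RHom_C(Be,C)\simeq\RHom_B(B,eB)=eB$. For the second, since $X$ has cohomology only in non-positive degrees and each $H^i(X)$ is finite-dimensional, Lemma~\ref{application of serre}(a) forces $H^j(\RHom_B(X,B))=0$ for all $j<d$ (by a spectral sequence or d\'evissage along the truncations of $X$). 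Feeding this into the long exact sequence from $\RHom_B(-,B)$ applied to the triangle gives $\Ext^i_C(Be,Be)\simeq\Hom_{\Dd(B)}(B,B[i])$ for $0\le i\le d-2$, which is the second assertion. In short: drop the attempt to pin $X$ down exactly; the two qualitative facts about its cohomology are all you need.
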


\begin{proof}
We consider
the triangle
\[\xymatrix{Be\lten_CeB\ar[r]^-{f} &B\ar[r]& X\ar[r]&Be\lten_CeB[1] \quad \textrm{ in } \Dd(B^{\rm e}),}\]
where $f$ is the composition $Be\lten_CeB\xrightarrow{}
Be\ten_CeB\xrightarrow{{\rm mult.}} B$ of natural maps.
Applying $-\lten_BBe$, we have an isomorphism $f\lten_BBe$.
Thus $X\lten_BBe=0$ holds. This means that
$H^i(X)e=0$ and hence $H^i(X)\in\mod\B$ for any $i\in\ZZ$. 

By Lemma \ref{application of serre}(b), we have $\RHom_B(X,eB)=0$. Applying
$\RHom_B(-,eB)$ to the above triangle, we get
\begin{eqnarray*}
&&eB=\RHom_B(B,eB)\simeq \RHom_B(Be\lten_CeB,eB)\\
&\simeq& \RHom_C(Be,\RHom_B(eB,eB))\simeq\RHom_C(Be,C)\quad \textrm{ in }\Dd(C^{\rm op}\ten B).
\end{eqnarray*} 
Thus the first assertion follows.

Similarly we have
\begin{eqnarray*}
\RHom_B(Be\lten_CeB,B)\simeq\RHom_C(Be,\RHom_B(eB,B))\simeq\RHom_C(Be,Be)\quad \textrm{ in }\Dd(C^{\rm op}\ten B).
\end{eqnarray*} 
Since $Be$ and $eB$ are concentrated in degree $0$,  $H^i(Be\lten_C eB)$ vanishes for $i>0$, and then $H^i(X)=0$ for any $i>0$. Hence we have $H^i(\RHom_B(X,B))=0$ for any $i<d$ again by Lemma \ref{application of serre}(a). 
Applying $\RHom_B(-,B)$ to the above triangle, we have an exact sequence
\[\xymatrix{\Hom_{\Dd(B)}(X,B[i])\ar[r]&\Hom_{\Dd(B)}(B,B[i])\ar[r]&
\Hom_{\Dd(B)}(Be\lten_CeB,B[i])\ar[r]&\Hom_{\Dd(B)}(X,B[i+1]).}\]
In particular, for any $i$ with $0\le i\le d-2$, we have isomorphisms
\[\Ext^i_C(Be,Be)\simeq \Hom_{\Dd(B)}(Be\lten_CeB,B[i])\simeq\Hom_{\Dd(B)}(B,B[i])\]
which show the second assertion.
\end{proof}

Now we are ready to prove Theorem~\ref{cluster tilting CM}(a), (b) and (c).

(i) First we show that $C$ is noetherian.

This follows from (A2) by the following easy argument:
Any right ideal $I$ of $C$ gives a right ideal $\widetilde{I}:=IB$ of $B$ satisfying $\widetilde{I}e=I$.
Thus any strictly ascending chain of right ideals of $C$ gives a strictly ascending chain of right ideals of $B$.
Thus $C$ is right noetherian. Similarly $C$ is left noetherian.

(ii) Next we show that $C$ is an Iwanaga-Gorenstein algebra.

For any $X\in\Mod C$, we shall show $\Ext^{d+1}_C(X,C)=0$.
Let $Y:=X\ten_CeB$ and $P_{\bullet}$ be a projective resolution of the $B$-module $Y$.
Then $P_{\bullet}e$ is a bounded complex in $\add_C(Be)$ 
which is quasi-isomorphic to $Ye\simeq X$.
Since by Proposition~\ref{CM property of Be} $\Ext^i_C(Be,C)$ vanishes for any $i>0$, we have
\[\Ext^{d+1}_C(X,C)\simeq H^{d+1}(\Hom_C(P_{\bullet}e,C)).\]
Since we have isomorphisms
\begin{eqnarray*}
\Hom_C(P_{\bullet}e,C)\simeq\Hom_C(P_{\bullet}\ten_BBe,C)
\simeq\Hom_B(P_{\bullet},\Hom_C(Be,C))\simeq\Hom_B(P_{\bullet},eB),
\end{eqnarray*}
we get
\[\Ext^{d+1}_C(X,C)\simeq H^{d+1}(\Hom_B(P_{\bullet},eB))\simeq\Ext^{d+1}_B(Y,eB)=0\]
by Proposition~\ref{CY give CY}.

(iii) We show that $Be$ is a Cohen-Macaulay $C$-module.

By Proposition~\ref{CM property of Be}, we only have to show that $Be$ is a finitely generated $C$-module.
By (A2), the right ideal $\langle e\rangle=BeB$ of $B$ is finitely generated.
There exists a finite generating set of the $B$-module $BeB$
which is contained in $Be$.
Clearly it gives a finite generating set of the $C$-module $Be$.

(iv) We show Theorem~\ref{cluster tilting CM}(c).

We have $\End_C(Be)\simeq B$ by Proposition~\ref{CM property of Be}.
Hence we have an equivalence
$$\Hom_C(Be,-):\add_C(Be)\to\proj B$$
which sends $C$ to $eB$. Thus we have
$$\End_{\underline{\CM}(C)}(Be)=\End_C(Be)/[C]\simeq \End_B(B)/[eB]\simeq B/BeB=\B.$$
Here we denote by $[C]$ (respectively, $[eB]$)the ideal of $\End_C(Be)$
(respectively, $\End_B(B)$) consisting of morphisms factoring through
$\add C$ (respectively, $\add eB$).

Similarly we have $B^{\rm op}\simeq\End_{C^{\rm op}}(eB)$ and $\B^{\rm op}\simeq \End_{\underline{\CM}(C^{\rm op})}(eB)$.
\qed

\medskip
We end this subsection with the following observation (which will not be used in this paper) asserting that $C$ enjoys the bimodule $d$-Calabi-Yau property except that $C$ may not be perfect as a bimodule over itself. 

\begin{rema}
We have $\RHom_{C^{\rm e}}(C,C^{\rm e})[d]\simeq C$ in $\Dd(C^{\rm e})$.
\end{rema}

\begin{proof}
Let $P_\bullet$ be  a projective resolution of the $B^{\rm e}$-module $B$.
Applying $eB\otimes_B-\otimes_BBe$, we get an isomorphism $eP_\bullet e\simeq C$ in $\Dd(C^{\rm e})$.
By Proposition~\ref{CM property of Be}, we have
\begin{eqnarray*}
\RHom_{C^{\rm e}}(eB\otimes Be,C^{\rm e})&=&
\RHom_{C^{\rm op}}(eB,C)\otimes \RHom_C(Be,C)\\
=\Hom_{C^{\rm op}}(eB,C)\otimes \Hom_C(Be,C)
&=&\Hom_{C^{\rm e}}(eB\otimes Be,C^{\rm e}).
\end{eqnarray*}
Thus each term $eP_ie$ in $eP_\bullet e$ satisfies $\Ext^i_{C^{\rm e}}(eP_ie,C^{\rm e})=0$ for any $i>0$, and we have
\[\RHom_{C^{\rm e}}(C,C^{\rm e})\simeq\Hom_{C^{\rm e}}(eP_\bullet e,C^{\rm e}).\]
Since the functor
\[eB\ten_B-\ten_BBe:\proj B^{\rm e}\to\mod C^{\rm e}\]
is fully faithful by Theorem \ref{cluster tilting CM}(c), we have
\[\Hom_{C^{\rm e}}(eP_ie,C^{\rm e})\simeq\Hom_{B^{\rm e}}(P_i,Be\otimes eB)=e\Hom_{B^{\rm e}}(P_i,B^{\rm e})e\]
Consequently we have
\begin{align*}
\RHom_{C^{\rm e}}(C,C^{\rm e})
&\simeq\Hom_{C^{\rm e}}(eP_\bullet e,C^{\rm e})\\
&\simeq e\Hom_{B^{\rm e}}(P_\bullet,B^{\rm e})e\\
&\simeq e\RHom_{B^{\rm e}}(B,B^{\rm e})e\\
&\simeq e(B[-d])e=C[-d].&&\qedhere
\end{align*}
\end{proof}

\subsection{$Be$ is $(d-1)$-cluster tilting}\label{subsection cluster tilting}
In this subsection we prove Theorem~\ref{cluster tilting CM}(d).

By Proposition~\ref{CM property of Be}, we have $\Ext^i_C(Be,Be)=0$ for any $i$ with $1\le i\le d-2$.
The assertion follows from the following lemmas.

\begin{lema}\label{second syzygy}
For any $X\in\mod C$, we have $\projdim_{B^{\rm op}}\Hom_C(X,Be)\le d-2$.
\end{lema}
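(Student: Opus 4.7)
The plan is to reduce the question to a general statement about $\Hom_B(-,B)$ applied to a right $B$-module, and then bound its projective dimension by a syzygy computation. First I would rewrite the hom: the natural map $\Hom_B(eB,B)\to Be$ sending $f\mapsto f(e)$ is an isomorphism of $(B,C)$-bimodules, and tensor-hom adjunction then yields a natural isomorphism of left $B$-modules
\[\Hom_C(X,Be)\simeq \Hom_B(X\ten_C eB,\,B).\]
Set $Y:=X\ten_C eB$; since $X\in\mod C$ and $B$ is noetherian by (A2), $Y\in\mod B$. The question is therefore reduced to bounding $\projdim_{B^{\rm op}}\Hom_B(Y,B)$.

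Next, I would exploit $\gldim B=d$ (from Proposition~\ref{CY give CY}(b)) to choose a projective resolution $0\to P_d\to\cdots\to P_0\to Y\to 0$ of $Y$ over $B$. Applying $\Hom_B(-,B)$ produces a complex of projective left $B$-modules
\[0\to\Hom_B(Y,B)\to Q^0\to Q^1\to\cdots\to Q^d,\]
with $Q^i:=\Hom_B(P_i,B)$. Its cohomology is $H^0=\Hom_B(Y,B)$ and $H^i=\Ext^i_B(Y,B)$ for $i\ge 1$. Since $B^{\rm op}$ is also bimodule $d$-Calabi-Yau, $\gldim B^{\rm op}=d$, so each $\Ext^i_B(Y,B)$ has projective dimension at most $d$ over $B^{\rm op}$.

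Finally, I would run a downward induction on syzygies. Set $Z^i:=\Ker(Q^i\to Q^{i+1})$ and $I^i:=\Imm(Q^{i-1}\to Q^i)$; these fit into short exact sequences
\[0\to Z^{i-1}\to Q^{i-1}\to I^i\to 0\quad\text{and}\quad 0\to I^i\to Z^i\to \Ext^i_B(Y,B)\to 0.\]
The base case is $Z^d=Q^d$, which is projective. Given $\projdim Z^i\le d-2$, the second sequence yields $\projdim I^i\le\max(\projdim Z^i,\,\projdim\Ext^i_B(Y,B)-1)\le d-1$, and the first then yields $\projdim Z^{i-1}\le\max(\projdim I^i-1,\,0)\le d-2$. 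By downward induction $\projdim Z^i\le d-2$ for all $0\le i\le d-1$, and in particular $\projdim_{B^{\rm op}}\Hom_C(X,Be)=\projdim Z^0\le d-2$.

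The main obstacle is just careful tracking of left versus right module structures in the reduction step; the syzygy induction itself is elementary. Notably the argument only uses $\gldim B=d$, so the same bound applies to $\Hom_B(Y,B)$ for any $Y\in\mod B$, and the bimodule Calabi-Yau hypothesis enters only to guarantee that $B$ has finite global dimension equal to $d$.
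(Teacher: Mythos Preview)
Your argument is correct, but it takes a substantially longer path than the paper's. The paper simply takes a two-term projective presentation $P_1\to P_0\to X\to 0$ in $\mod C$ and applies $\Hom_C(-,Be)$. Since $\Hom_C(C,Be)\simeq Be$ is a projective left $B$-module (because $e$ is an idempotent of $B$), the terms $\Hom_C(P_i,Be)$ are projective over $B^{\rm op}$, and the exact sequence
\[0\to\Hom_C(X,Be)\to\Hom_C(P_0,Be)\to\Hom_C(P_1,Be)\]
exhibits $\Hom_C(X,Be)$ as a second syzygy of a $B^{\rm op}$-module. The bound $\le d-2$ follows at once from $\gldim B^{\rm op}=d$.

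By contrast, you first pass through the adjunction $\Hom_C(X,Be)\simeq\Hom_B(X\ten_C eB,B)$, then resolve $Y=X\ten_C eB$ to full length $d$ over $B$, and finally run a downward syzygy induction that has to absorb the cohomology groups $\Ext^i_B(Y,B)$ at each stage. This is valid, and as you note it yields the slightly more general statement that $\projdim_{B^{\rm op}}\Hom_B(Y,B)\le d-2$ for every $Y\in\mod B$. But the paper's approach makes the ``second syzygy'' structure transparent with no induction and no need to control the higher Ext groups; the only ingredient beyond $\gldim B^{\rm op}=d$ is the elementary observation that $\Hom_C(C,Be)$ is $B^{\rm op}$-projective.
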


\begin{proof}
Let $\xymatrix{P_1\ar[r]&P_0\ar[r]&X\ar[r]&0}$ be a projective presentation of $X$ in $\mod C$.
Applying $\Hom_C(-,Be)$, we have an exact sequence
\[\xymatrix{0\ar[r]&\Hom_C(X,Be)\ar[r]&\Hom_C(P_0,Be)\ar[r]&\Hom_C(P_1,Be)}\]
of $B^{\rm op}$-modules. 
Then $\Hom_C(P_i,Be)$ is a projective $B^{\rm op}$-module for $i=0,1$.
Since $\gldim B^{\rm op}=d$ by Proposition \ref{CY give CY},
we have $\projdim_{B^{\rm op}}\Hom_C(X,Be)\le d-2$
\end{proof}

\begin{lema}\label{cluster lemma 2}
If $X\in\CM(C)$ satisfies $\Ext^i_C(X,Be)=0$ for any $i$ with $1\le i\le d-2$, then we have $X\in\add_C(Be)$.
\end{lema}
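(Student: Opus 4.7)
The strategy is iterative: starting from $Y_0:=X$, I inductively build short exact sequences $0\to Y_k\to T_k\to Y_{k+1}\to 0$ in $\CM(C)$, where $Y_k\to T_k$ is a left $\add_C(Be)$-approximation. Three verifications make the construction work. First, the approximation is injective, since $\CM(C)$ is Frobenius with $\add C\subseteq\add_C(Be)$ being the injective-projectives (as $C$ is a summand of $Be$ as a right $C$-module via the idempotent $e$), so the embedding $Y_k\hookrightarrow C^n$ into an injective envelope factors through the approximation. Second, the cokernel $Y_{k+1}$ remains in $\CM(C)$: the approximation property applied to the summand $C\in\add_C(Be)$ makes $\Hom_C(T_k,C)\to\Hom_C(Y_k,C)$ surjective, so $\Ext^1_C(Y_{k+1},C)=0$, and the higher Ext's vanish by the long exact sequence since $Y_k,T_k\in\CM(C)$. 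Third, Proposition~\ref{CM property of Be} ($\Ext^i_C(T_k,Be)=0$ for $1\le i\le d-2$) together with the surjectivity of $\Hom_C(T_k,Be)\to\Hom_C(Y_k,Be)$ ensures that the hypothesis $\Ext^i_C(Y_k,Be)=0$ ($1\le i\le d-2$) is preserved. A direct LES computation further identifies $\Hom_C(Y_{k+1},Be)$ as a first $B^{\rm op}$-syzygy of $\Hom_C(Y_k,Be)$; since Lemma~\ref{second syzygy} bounds $\projdim_{B^{\rm op}}\Hom_C(X,Be)\le d-2$, after $d-2$ iterations $\Hom_C(Y_{d-2},Be)$ becomes projective in $\mod B^{\rm op}$.

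The climax is then the following. By Theorem~\ref{cluster tilting CM}(c), $\Hom_C(-,Be)$ restricts to an equivalence $\add_C(Be)\simeq\proj B^{\rm op}$, so pick $T\in\add_C(Be)$ with $\Hom_C(T,Be)\simeq\Hom_C(Y_{d-2},Be)$. The Yoneda-type isomorphism $\Hom_C(Y_{d-2},T)\simeq\Hom_{B^{\rm op}}(\Hom_C(T,Be),\Hom_C(Y_{d-2},Be))$---additive in $T\in\add_C(Be)$ and tautological for $T=Be$---lifts this iso to $\alpha\colon Y_{d-2}\to T$ in $\mod C$. Then $\alpha$ is an injective left approximation whose cokernel $Z\in\CM(C)$ satisfies $\Hom_C(Z,Be)=0$; since $C$ is a summand of $Be$ as right $C$-module, also $\Hom_C(Z,C)=0$, and reflexivity of Cohen-Macaulay modules (Proposition~\ref{CM over Iwanaga}(b)) forces $Z=\Hom_{C^{\rm op}}(\Hom_C(Z,C),C)=0$. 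Thus $Y_{d-2}\simeq T\in\add_C(Be)$.

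The main obstacle I foresee is propagating back to conclude $X=Y_0\in\add_C(Be)$: each SES $0\to Y_k\to T_k\to Y_{k+1}\to 0$ with $T_k,Y_{k+1}\in\add_C(Be)$ must split, i.e., the extension class in $\Ext^1_C(Y_{k+1},Y_k)$ must vanish. Since $Y_{k+1}\in\add_C(Be)$, this $\Ext^1$ is a direct sum of copies of $\Ext^1_C(Be,Y_k)$, and by a Calabi-Yau Serre duality on $\underline{\CM}(C)$ --- coming from the bimodule $d$-Calabi-Yau property of $B$ via Theorem~\ref{cluster tilting CM}(c) --- one expects $\Ext^1_C(Be,Y_k)\simeq D\,\Ext^{d-2}_C(Y_k,Be)=0$ by the preserved hypothesis. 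Establishing or invoking this Serre-type duality for the stable category $\underline{\CM}(C)$ is the delicate point on which the whole argument hinges.
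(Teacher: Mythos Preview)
Your approach has a genuine circularity at the back-propagation step. You need $\Ext^1_C(Be,Y_k)\simeq D\,\Ext^{d-2}_C(Y_k,Be)$, i.e.\ that $\underline{\CM}(C)$ is $(d-1)$-Calabi-Yau. But in this paper that fact is \emph{not} available in Section~2: it is only obtained after Theorem~\ref{main diagram} as a consequence of the triangle equivalence $\underline{\CM}(C)\simeq\Cc_{d-1}(\A)$, whose proof uses Theorem~\ref{cluster tilting CM}(d), which in turn rests on the very lemma you are proving. Theorem~\ref{cluster tilting CM}(c) only gives $\End_C(Be)\simeq B$; it does not supply Serre duality on $\underline{\CM}(C)$. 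So your final ``delicate point'' is in fact a gap.

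The paper's argument sidesteps the whole issue by going backwards rather than forwards. Instead of building a $(Be)$-coresolution of $X$ via left approximations and then having to split it, take an ordinary projective resolution
\[
0\to\Omega^{d-2}X\to P_{d-3}\to\cdots\to P_0\to X\to 0
\]
and apply $\Hom_C(-,Be)$. The hypothesis $\Ext^i_C(X,Be)=0$ for $1\le i\le d-2$ makes the resulting sequence exact all the way to $\Hom_C(\Omega^{d-2}X,Be)$. Now $\Hom_C(X,Be)$ sits at the \emph{left} end of a length-$(d-2)$ exact sequence of $B^{\rm op}$-modules whose middle terms are projective and whose right end has $\projdim_{B^{\rm op}}\le d-2$ by Lemma~\ref{second syzygy}. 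Hence $\Hom_C(X,Be)$ is projective over $B^{\rm op}$ directly---no iteration, no splitting needed. From there your own ``climax'' argument (or equivalently: $\Hom_C(X,C)=e\Hom_C(X,Be)\in\add_{C^{\rm op}}(eB)$, then reflexivity and Proposition~\ref{CM property of Be}) finishes immediately. The moral: your approximation sequence computes $B^{\rm op}$-syzygies of $\Hom_C(X,Be)$ in the wrong direction; a projective resolution of $X$ exhibits $\Hom_C(X,Be)$ itself as a high syzygy, which is exactly what you want.
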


\begin{proof}
Let
$$\xymatrix{0\ar[r]&\Omega^{d-2}X\ar[r]&P_{d-3}\ar[r]&\cdots\ar[r]&P_0\ar[r]&X\ar[r]&0}$$
be a projective resolution of the $C$-module $X$.
Applying $\Hom_C(-,Be)$, we get an exact sequence
$$\xymatrix@C=.5cm{0\ar[r]&\Hom_C(X,Be)\ar[r]&\Hom_C(P_0,Be)\ar[r]&\cdots\ar[r]&\Hom_C(P_{d-3},Be)\ar[r]&\Hom_C(\Omega^{d-2}X,Be)\ar[r] & 0}$$
of $B^{\rm op}$-modules, where we used that $\Ext^i_C(X,Be)=0$ for any $i$ with $1\le i\le d-2$. 
By Lemma~\ref{second syzygy}, we have $\projdim_{B^{\rm op}}\Hom_C(\Omega^{d-2}X,Be)\leq d-2$.
Since each $\Hom_C(P_i,Be)$ is a projective $B^{\rm op}$-module, it follows that $\Hom_C(X,Be)$ is a projective $B^{\rm op}$-module.
Thus we have $\Hom_C(X,C)=e\Hom_C(X,Be)\in\add_{C^{\rm op}}(eB)$ and
$$X\simeq\Hom_{C^{\rm op}}(\Hom_C(X,C),C)\in\add_C\Hom_{C^{\rm op}}(eB,C)=\add_C(Be)$$
by Propositions~\ref{CM over Iwanaga} and \ref{CM property of Be}.
\end{proof}

\begin{lema}\label{cluster lemma 3}
If $X\in\CM(C)$ satisfies $\Ext^i_C(Be,X)=0$ for any $1\le i\le d-2$, then we have $X\in\add_C(Be)$.
\end{lema}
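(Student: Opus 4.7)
The plan is to deduce this statement from Lemma \ref{cluster lemma 2} by applying the Cohen-Macaulay duality $(-)^{*}:=\Hom_C(-,C)\colon\CM(C)\to\CM(C^{\rm op})$ of Proposition \ref{CM over Iwanaga}(b). The key point is that this duality is quasi-inverse to $\Hom_{C^{\rm op}}(-,C)$ and preserves extension groups, and moreover $(Be)^{*}=eB$ by Proposition \ref{CM property of Be}. So starting from $X\in\CM(C)$ with $\Ext^i_C(Be,X)=0$ for $1\le i\le d-2$, the duality yields $X^{*}\in\CM(C^{\rm op})$ together with isomorphisms
\[\Ext^i_{C^{\rm op}}(X^{*},eB)\simeq\Ext^i_C(Be,X)=0\quad\textrm{for }1\le i\le d-2.\]

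Next I would invoke the version of Lemma \ref{cluster lemma 2} for the opposite algebra. All of the hypotheses (A1), (A2), (A3) are left/right symmetric: $B^{\rm op}$ is again bimodule $d$-Calabi-Yau (as noted after Definition \ref{definition of CY algebra}), $B^{\rm op}$ is noetherian (step (i) of the proof of Theorem \ref{cluster tilting CM} works on both sides), and $\B^{\rm op}=B^{\rm op}/\langle e\rangle$ is finite dimensional since $\B$ is. Thus Lemma \ref{cluster lemma 2} applied with $B$ replaced by $B^{\rm op}$ and $Be$ replaced by $eB$ says: any $Y\in\CM(C^{\rm op})$ with $\Ext^i_{C^{\rm op}}(Y,eB)=0$ for $1\le i\le d-2$ lies in $\add_{C^{\rm op}}(eB)$. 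Taking $Y=X^{*}$ gives $X^{*}\in\add_{C^{\rm op}}(eB)$.

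Finally I would apply the quasi-inverse duality $\Hom_{C^{\rm op}}(-,C)$. Since $\Hom_{C^{\rm op}}(eB,C)=Be$ (the opposite-side analogue of Proposition \ref{CM property of Be}, which holds by replacing $B$ with $B^{\rm op}$) and $X^{**}\simeq X$ for $X\in\CM(C)$, we get
\[X\simeq\Hom_{C^{\rm op}}(X^{*},C)\in\add_C\Hom_{C^{\rm op}}(eB,C)=\add_C(Be),\]
which is the desired conclusion.

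There is essentially no obstacle here beyond bookkeeping: the entire argument is a formal dualisation of Lemma \ref{cluster lemma 2}, and the only thing to verify is the symmetry of the standing hypotheses, which I have noted above. This is perfectly parallel to how Theorem \ref{cluster tilting CM}(c) handles both $B$ and $B^{\rm op}$ simultaneously.
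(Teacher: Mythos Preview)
Your proposal is correct and follows essentially the same approach as the paper: dualise via $(-)^{*}=\Hom_C(-,C)$, use $(Be)^{*}=eB$ and preservation of $\Ext$-groups to reduce to Lemma~\ref{cluster lemma 2} applied to $(B^{\rm op},C^{\rm op},eB,X^{*})$, and then dualise back. Your additional remarks on the left/right symmetry of (A1)--(A3) simply make explicit what the paper leaves implicit.
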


\begin{proof}

Let $(-)^*:=\Hom_C(-,C):\CM(C)\to\CM(C^{\rm op})$ be the duality in Proposition~\ref{CM over Iwanaga}.
Then we have $(Be)^*=eB$ by Proposition~\ref{CM property of Be}.
Since the duality $(-)^*$ preserves the extension groups, we have $\Ext^i_{C^{\rm op}}(X^*,eB)=0$ for any $i$ with $1\le i\le d-2$.
Applying Lemma~\ref{cluster lemma 2} to $(B,C,Be,X):=(B^{\rm op},C^{\rm op},eB,X^*)$, we have $X^*\in\add_{C^{\rm op}}(eB)$.
Applying $(-)^*$ again, we have $X\in\add_{C}(Be)$.
\end{proof}

Now Theorem \ref{cluster tilting CM}(d) is a direct consequence of Lemmas~\ref{cluster lemma 2} and \ref{cluster lemma 3}.

\section{Graded Calabi-Yau algebras as higher preprojective algebras}

In this section, which is independent of Section 2, we work with a graded algebra $B=\bigoplus_{\ell\geq 0}B_\ell$ such that $\dim_k B_0$ is finite. We show under assumptions of $d$-Calabi-Yau type on $B$,
that $B$ is isomorphic to the $d$-preprojective algebra of $A:=B_0$. 
\subsection{Basic setup and main result}\label{subsection BP}

\begin{dfa}
Let $d\geq 2$. Assume that $B=\bigoplus_{\ell\geq 0}B_\ell$ is a positively $\ZZ$-graded $k$-algebra. We say that $B$
is \emph{bimodule $d$-Calabi-Yau of Gorenstein parameter~$1$} if $B\in\per B^{\rm e}$ and
there exists a graded projective resolution $P_\bullet$ of $B$ as a bimodule and an isomorphism 
\begin{equation}\label{selfduality}
P_\bullet\simeq P^{\vee}_\bullet[d](-1)\quad \textrm{in } \Cc^{\rm b}(\grproj B^{\rm e}),
\end{equation}
where we denote by $(-)^{\vee}=\Hom_{B^{\rm e}}(-,B^{\rm e}):\Cc^{\rm
b}(\grproj B^{\rm e})\to\Cc^{\rm b}(\grproj (B^{\rm e})^{\rm op})\simeq \Cc^{\rm b}(\grproj B^{\rm e})$ the natural duality induced by a canonical isomorphism $(B^{\rm e})^{\rm op}\simeq B^{\rm e}$.
\end{dfa}

\begin{rema} If for any $\ell\in\mathbb{N}$ the homogenous part $B_\ell$ is finite dimensional, then the category $\gr B$ is $\Hom$-finite and Krull-Schmidt. Hence the graded algebra $B$ is bimodule $d$-Calabi-Yau of Gorenstein parameter $1$ if and only if there exists an isomorphism $$\RHom_{B^{\rm e}}(B,B^{\rm e})[d](-1)\simeq B \quad\textrm{in } \Dd(\Gr B^{\rm e}).$$ In this case, the minimal projective resolution $P_\bullet$ of $B$ as a $B$-bimodule satisfies \eqref{selfduality}
\end{rema}

Throughout this section we assume
\medskip
\begin{equation*}\label{(A1*)}\tag{A1*} B \textrm { is bimodule d-Calabi-Yau of Gorenstein parameter 1}.\end{equation*}

\medskip 
The aim of this section is to prove the following.

\begin{thma}\label{thm_preproj_A}
Let $B$ be as above with $A:=B_0$ finite dimensional. Then we have the following.
\begin{itemize}
\item[(a)] $A$ is a finite dimensional $k$-algebra with $\gldim A\leq d-1$.
\item[(b)] The derived $d$-preprojective algebra $\BPi_d(A)$ is concentrated in degree zero.
\item[(c)] There exists an isomorphism $\Pi_d(A)\simeq B$ of $\ZZ$-graded algebras,
where $\Pi_d(A)$ is the $d$-preprojective algebra of $A$.
\end{itemize}
\end{thma}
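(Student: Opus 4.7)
The plan is to exploit a minimal graded projective $B^{\rm e}$-resolution $P_\bullet\to B$ of length $d$, together with the self-duality $P_\bullet\simeq P_\bullet^\vee[d](-1)$. By minimality, the positivity of the grading, and $\dim_k A<\infty$, each term should decompose as $P_i\simeq B\otimes_A U_i\otimes_A B$ (in a suitable graded bimodule sense) for a graded $A$-bimodule $U_i\simeq\Tor_i^B(A,A)$, with $U_i$ concentrated in strictly positive internal degrees for $i\ge1$; the boundary term $P_0$ carries $U_0\simeq A$ in internal degree $0$, while the Gorenstein parameter $1$ condition forces $U_d\simeq A(-1)$ in internal degree $1$. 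Transferring the $B^{\rm e}$-duality down to the $U_\bullet$ layer should yield isomorphisms of graded $A$-bimodules
\[U_i\simeq\RHom_{A^{\rm e}}(U_{d-i},A^{\rm e})(-1),\]
so that each $U_i$ is pinned down by its mirror $U_{d-i}$.

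Statement~(a) would then follow by applying $-\otimes_B^{\mathbf{L}} A$ to $P_\bullet$ and extracting from it a projective $A^{\rm e}$-resolution of $A$ of length at most $d-1$; the internal shift $(-1)$ in the duality absorbs one cohomological degree of the length-$d$ bimodule resolution of $B$. An equivalent route is to use Proposition~\ref{CY give CY}(a) to convert Ext-vanishing for $A$-modules, viewed as $B$-modules via $B\twoheadrightarrow A$, into the corresponding Ext-vanishing over $A$. For~(b), the extremal step of the self-duality should identify $U_{d-1}\simeq DA$ in internal degree $1$; combined with $\gldim A\le d-1$ from~(a), this gives $\Ext^i_A(DA,A)=0$ for all $i\ne d-1$, so that $\Theta_{d-1}(A)$ reduces to a single $A$-bimodule in cohomological degree zero and the derived preprojective $\mathbf{\Pi}_d(A)$ is concentrated there as well.

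For~(c), the degree-$1$ component $B_1=U_1$ is identified with $\Ext^{d-1}_A(DA,A)$ via the $A^{\rm e}$-dualization of $U_{d-1}\simeq DA$, and this produces a graded algebra morphism $\Pi_d(A)=T_A\Ext^{d-1}_A(DA,A)\to B$ extending the identity on $A=B_0$ and the inclusion $B_1\hookrightarrow B$. To prove bijectivity I would compare the two graded algebras through their natural bar-type bimodule resolutions, or equivalently by matching Hilbert series of the $U_i$'s against the expected graded pieces of $\Pi_d(A)$. The main obstacle is precisely the transfer of the $B^{\rm e}$-level self-duality down to the $A^{\rm e}$-level on the $U_i$'s: the functor $B\otimes_A(-)\otimes_A B$ does not carry $A^{\rm e}$-projectives to $B^{\rm e}$-projectives unless $A$ has finite global dimension, so the identification
\[\RHom_{B^{\rm e}}(B\otimes_A U\otimes_A B,\,B^{\rm e})\simeq B\otimes_A\RHom_{A^{\rm e}}(U,A^{\rm e})\otimes_A B\]
powering the whole argument depends on (a), which we also want to prove. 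The argument must therefore be arranged in bootstrap fashion: first extract $\gldim A\le d-1$ from $P_\bullet$ by a more elementary argument (e.g.\ directly from the truncation of $P_\bullet$ in low internal degrees), and only then invoke the refined $A^{\rm e}$-duality on the $U_i$ to pin down $B_1$ and to conclude that $B$ equals the tensor algebra $\Pi_d(A)$.
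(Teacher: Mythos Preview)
Your proposal has a genuine error at its starting point. You claim that in the minimal graded $B^{\rm e}$-resolution $P_\bullet\to B$, the ``coefficient'' bimodules $U_i\simeq\Tor_i^B(A,A)$ are concentrated in strictly positive internal degrees for $i\ge1$. This is false. What the self-duality $P_\bullet\simeq P_\bullet^\vee[d](-1)$ actually forces is that each $P_i$ is generated in degrees $0$ \emph{and} $1$: if $P_i$ has a generator in degree $a\ge0$, then $P_{d-i}$ has one in degree $1-a\ge0$, so $a\in\{0,1\}$. For $1\le i\le d-1$ both degree-$0$ and degree-$1$ generators typically occur; only $P_0$ is purely in degree $0$ and $P_d$ purely in degree $1$. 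Consequently your identifications $U_{d-1}\simeq DA$ and $U_1\simeq B_1$ do not hold on the nose, and the argument for (b) via ``$\Ext^i_A(DA,A)=0$ for $i\ne d-1$'' breaks down.

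The paper's key move is precisely to exploit this two-degree structure: one splits $P_\bullet$ as the mapping cone of a morphism $f:R_\bullet(-1)\to Q_\bullet$, where $Q_\bullet$ collects the degree-$0$ generators and $R_\bullet(-1)$ the degree-$1$ generators. The self-duality then gives $R_\bullet\simeq Q_\bullet^\vee[d-1]$. Taking degree-$0$ parts, $(Q_\bullet)_0$ is a projective $A^{\rm e}$-resolution of $A$ of length $\le d-1$, proving (a). Dualizing, $A\otimes_BR_\bullet\otimes_BA\simeq\RHom_{A^{\rm e}}(A,A^{\rm e})[d-1]=\Theta$, and the cone triangle yields $\Theta\otimes_AB(-1)\to B\to A\to$ in $\Dd(\Gr(A^{\rm op}\otimes B))$. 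Iterating gives isomorphisms $\Theta^\ell\simeq B_\ell$ in $\Dd(A^{\rm e})$ for all $\ell\ge0$, which proves (b) directly and, after checking multiplicativity, gives the graded algebra isomorphism $\Pi_d(A)\simeq B$ in (c). Your ``Hilbert series or bar resolution'' suggestion for (c) is too vague to stand on its own; the actual content is the compatibility of these $\Theta^\ell\simeq B_\ell$ with composition, which comes essentially for free from the iterated triangle.
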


Note that as a consequence of this Theorem, we obtain that $\dim_kB_\ell$ is finite for all $\ell\geq 0$ since $B_\ell\simeq \underbrace{\Ext^{d-1}_A(DA,A)\otimes_A \cdots \otimes_A \Ext^{d-1}_A(DA,A)}_{\ell \textrm{ times}}$.

The main step of the proof consists of the following intermediate result.

\begin{prop}\label{proptriangle}
Let $B$ be as above, $A:=B_0$ and $\Theta=\Theta_{d-1}(A)$ be a projective resolution of $\RHom_{A^{\rm e}}(A, A^{\rm e})[d-1]$ in $\Dd(A^{\rm e})$. Then there exists a triangle
$$\xymatrix{\Theta\ten_A B(-1)\ar[r]^-{\alpha} & B\ar[r]^{a} & A\ar[r] & \Theta\ten_A B(-1)[1]}\quad \textrm{in } \Dd(\Gr (A^{\rm op}\ten B))$$
where $a:B\to A$ is the natural surjection.
\end{prop}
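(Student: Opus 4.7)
The plan is to upgrade the obvious short exact sequence
\[ 0\to B_{\ge 1}\to B\xrightarrow{a} A\to 0\]
of graded $A^{\rm op}\ten B$-modules into the claimed triangle. This sequence already yields a triangle in $\Dd(\Gr(A^{\rm op}\ten B))$; the substantive content is to identify $B_{\ge 1}\simeq\Theta\ten_A B(-1)$ in this derived category, compatibly with the inclusion into $B$.

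To construct this identification, I would first produce a triangle in the larger bimodule category $\Dd(\Gr B^{\rm e})$ of the form
\[ B\lten_A\Theta\lten_A B(-1)\xrightarrow{\phi} B\ten_A B\xrightarrow{\mu} B \to B\lten_A\Theta\lten_A B(-1)[1], \]
where $\mu$ is the multiplication map. Applying the reduction functor $A\lten_B-:\Dd(\Gr B^{\rm e})\to\Dd(\Gr(A^{\rm op}\ten B))$ to this triangle will then give the desired one, since $A\lten_B B\simeq A$, $A\lten_B(B\ten_A B)\simeq A\ten_A B=B$ with $A\lten_B\mu$ becoming $a$, and $A\lten_B(B\lten_A\Theta\lten_A B(-1))\simeq\Theta\ten_A B(-1)$.

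To build the bimodule triangle, I would let $F$ be the fiber of $\mu$ in $\Dd(\Gr B^{\rm e})$ and identify $F\simeq B\lten_A\Theta\lten_A B(-1)$ using hypothesis (A1*). The strategy is to apply $\RHom_{B^{\rm e}}(-, B^{\rm e})$ to the triangle $F\to B\ten_A B\xrightarrow{\mu}B$ and to combine two ingredients: the Calabi-Yau of Gorenstein parameter $1$ identity $\RHom_{B^{\rm e}}(B, B^{\rm e})\simeq B[-d](1)$, and the tensor-hom adjunction
\[ \RHom_{B^{\rm e}}(B\lten_A M\lten_A B,\, -)\simeq\RHom_{A^{\rm e}}(M,\, -|_{A^{\rm e}})\]
applied to $M=A$ (using $B\ten_A B\simeq B\lten_A A\lten_A B$), which together with the definition of $\Theta$ expresses $\RHom_{B^{\rm e}}(B\ten_A B,B^{\rm e})$ in terms of $\Theta$ twisted by $B$ on both sides and shifted by $[-(d-1)]$. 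These two computations pin down $\RHom_{B^{\rm e}}(F, B^{\rm e})$, and dualizing back recovers $F$.

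The hardest step will be this last identification: one must carefully track both the cohomological shift $[d]$ and the graded twist $(1)$ coming from the Calabi-Yau duality, and confirm that the various adjunctions and reflexivity properties hold in the graded derived category. The shift $[d-1]$ appearing in $\Theta=\Theta_{d-1}(A)$ should emerge as $[d]-[1]$, where the $[1]$ compensates for completing the sequence $F\to B\ten_A B\to B$ into a triangle, while the Gorenstein parameter $1$ is precisely what produces the single grading shift $(-1)$ on the $\Theta$ side. Once this matching of shifts is verified, the proposition follows by applying $A\lten_B-$ as described above.
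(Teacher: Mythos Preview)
Your overall architecture is right, and the bimodule triangle you aim for is essentially the one the paper constructs (the closing remark of that subsection identifies $Q_\bullet\simeq B\lten_A B$ and $R_\bullet\simeq B\ten_A\Theta\ten_A B$). The reduction via $A\lten_B-$ also works as you describe. The gap is in your identification of the fiber $F$ of $\mu$. After the adjunction step you obtain $\RHom_{B^{\rm e}}(B\lten_A B,B^{\rm e})\simeq\RHom_{A^{\rm e}}(A,B^{\rm e})$, and you then want to rewrite this as $B\lten_A\RHom_{A^{\rm e}}(A,A^{\rm e})\lten_A B$. That base-change step requires $A\in\per A^{\rm e}$, i.e.\ $\gldim A<\infty$; likewise, ``dualizing back'' needs $F\in\per B^{\rm e}$, hence $B\lten_A B\in\per B^{\rm e}$, which again reduces to $A\in\per A^{\rm e}$. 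But finiteness of $\gldim A$ is not among the hypotheses --- in the paper it is a \emph{consequence} of this circle of ideas (Proposition~\ref{global dimension of A}) --- so your argument is circular at this point.

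The paper breaks the circle by a direct argument on the minimal graded projective $B^{\rm e}$-resolution $P_\bullet$ of $B$: the self-duality $P_\bullet\simeq P_\bullet^\vee[d](-1)$ forces every generator of each $P_i$ to lie in degree $0$ or $1$ (a generator in degree $a\ge0$ has a dual partner in degree $1-a\ge0$). This yields an explicit decomposition of $P_\bullet$ as the mapping cone of a map $R_\bullet(-1)\to Q_\bullet$ with $Q_\bullet$ and $R_\bullet$ generated in degree $0$, and the duality pairs them as $R_\bullet\simeq Q_\bullet^\vee[d-1]$. The degree-zero part $(Q_\bullet)_0$ is then automatically a finite complex of \emph{free} $A^{\rm e}$-modules resolving $A$ --- this is where $\gldim A\le d-1$ comes from --- and $(R_\bullet)_0$ is identified with $\Theta$ via the pairing. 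Applying $A\ten_B-$ to the mapping-cone triangle gives the proposition. The idea missing from your approach is precisely this ``generators live in degrees $0$ and $1$'' observation, which is the concrete content of Gorenstein parameter~$1$.
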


Before proving Proposition~\ref{proptriangle} and Theorem~\ref{thm_preproj_A}, let us give an application.

\begin{dfa}\cite{HIO}
Let $n$ be a positive integer.
A finite dimensional algebra $A$ is called \emph{$n$-representation infinite} if $\gldim A\le n$ and $\SSS_n^{-i}A$ belongs to $\mod A$ for any $i\ge0$.
\end{dfa}

Clearly an algebra $A$ with $\gldim A\le n$ is $n$-representation infinite if and only if $\BPi_{n+1}(A)$ is concentrated in degree zero.
Thus we have the following immediate consequence. 

\begin{cora}\label{from CY to nrepinfin}
Let $B$ be a graded algebra which is bimodule $d$-Calabi-Yau of Gorenstein parameter $1$, with $\dim_k B_0<\infty$. Then $B_0$ is $(d-1)$-representation infinite.
\end{cora}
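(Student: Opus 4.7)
The plan is to derive this essentially as an immediate consequence of Theorem~\ref{thm_preproj_A}, together with the characterization of $(d-1)$-representation infinite algebras mentioned in the paragraph preceding the corollary. In fact there is essentially nothing to prove beyond assembling these pieces, so the main task is to explain clearly why the two items supplied by Theorem~\ref{thm_preproj_A} are exactly what the definition requires.

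First, I would apply Theorem~\ref{thm_preproj_A} directly. Since $B$ is assumed to be bimodule $d$-Calabi-Yau of Gorenstein parameter $1$ with $A:=B_0$ finite dimensional, part (a) of that theorem provides $\gldim A \leq d-1$, and part (b) asserts that the derived $d$-preprojective algebra $\BPi_d(A)$ is concentrated in degree zero (i.e.\ has vanishing cohomology outside degree $0$).

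Second, I would invoke the characterization stated in the paragraph immediately preceding the corollary: with $n=d-1$, an algebra of global dimension at most $n$ is $n$-representation infinite precisely when $\BPi_{n+1}(A)$ is concentrated in degree zero. The heuristic here is that $\BPi_d(A) = \Talg_A(\Theta)$ with $\Theta \simeq \SSS_{d-1}^{-1}A$ in $\Dd(A^{\rm e})$, so its $\ell$-th graded piece is quasi-isomorphic to $\SSS_{d-1}^{-\ell}A$; concentration in cohomological degree $0$ for every~$\ell$ is exactly the condition $\SSS_{d-1}^{-\ell}A \in \mod A$ for all $\ell \ge 0$ in the definition of $(d-1)$-representation infinite. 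Combining this equivalence with the two outputs of Theorem~\ref{thm_preproj_A} yields that $B_0$ is $(d-1)$-representation infinite, completing the proof.

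Since all the substantive work is deferred to Theorem~\ref{thm_preproj_A}, the only potential obstacle is making sure the reader sees why "$\BPi_d(A)$ concentrated in degree zero" matches the module-theoretic formulation in the definition; I would therefore include the short sentence identifying the $\ell$-th tensor power of $\Theta$ with $\SSS_{d-1}^{-\ell}A$ rather than merely citing the preceding remark.
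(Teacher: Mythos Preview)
Your proposal is correct and matches the paper's approach exactly: the paper also derives the corollary as an immediate consequence of Theorem~\ref{thm_preproj_A} together with the observation that $\gldim A\le n$ and $\BPi_{n+1}(A)$ concentrated in degree zero is equivalent to $A$ being $n$-representation infinite. Your added sentence identifying $\Theta^\ell$ with $\SSS_{d-1}^{-\ell}A$ to justify this equivalence is a helpful elaboration, but the paper simply states the equivalence as ``clear'' and gives no further proof.
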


The $n$-representation infinite algebras are also called \emph{extremely quasi $n$-Fano} and studied from the viewpoint of non-commutative algebraic geometry in \cite{MM10}.
In particular, Corollary~\ref{from CY to nrepinfin} was proved in \cite[Thm 4.12]{MM10} using
quite different methods. We note that combining with Keller's result
\cite[Thm 4.8]{Kel09}, we have a bijection between bimodule $d$-Calabi-Yau
algebras of Gorenstein parameter $1$ and $(d-1)$-representation
infinite algebras (see \cite[Thm 4.35]{HIO}).

\subsection{Splitting the graded projective resolution}

Let us start with the following observation.

\begin{lema}\label{clear}
Let $B$ be a positively graded algebra, and $A=B_0$. 
Let $Q_\bullet$ be a complex in $\Cc^{\rm b}(\grproj B^{\rm e})$ such that each term is generated in degree zero.
\begin{itemize}
\item[(a)] The degree zero part $(Q_\bullet)_0$ is isomorphic to $A\otimes_BQ_\bullet\otimes_BA$ in $\Cc^{\rm b}(\proj A^{\rm e})$.
\item[(b)] We have isomorphisms $B\otimes_AA\otimes_BQ_\bullet\simeq Q_\bullet\simeq Q_\bullet\otimes_BA\otimes_AB$ in $\Cc^{\rm b}(\grproj B^{\rm e})$.
\end{itemize}
\end{lema}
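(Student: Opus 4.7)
The plan is to reduce both parts to a direct verification on the single graded free $B^{\rm e}$-module $F=B\otimes B$, which is graded projective and generated in degree zero by $1\otimes 1$, and then let additivity and functoriality handle the general case. First I would observe that any graded projective $B^{\rm e}$-module $Q$ generated in degree zero is a direct summand of some $F^{(I)}$: indeed, the multiplication map $B\otimes Q_0\otimes B\to Q$ is a graded $B^{\rm e}$-surjection whenever $Q$ is generated by $Q_0$, and since $Q$ is projective this surjection splits. All four constructions appearing in (a) and (b)---namely $(-)_0$, $A\otimes_B-\otimes_BA$, $B\otimes_AA\otimes_B-$, and $-\otimes_BA\otimes_AB$---are additive functors, so natural isomorphisms verified on $F$ propagate automatically to direct sums, to their direct summands, and termwise to bounded complexes.

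For (a), the only ingredient I need is the basic identification $A\otimes_BB\simeq A$ as $(A,B)$-bimodules, where $A=B/B_{>0}$ is viewed as a $B$-bimodule via the augmentation. Applying this twice gives $A\otimes_BF\otimes_BA\simeq A\otimes A=F_0$, and the canonical map $F_0\to A\otimes_BF\otimes_BA$ sending $x\mapsto 1\otimes_Bx\otimes_B1$ is the identity under this identification.

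For (b), I would combine $A\otimes_BB\simeq A$ with the symmetric identity $B\otimes_AA\simeq B$ (this time viewing $B$ as a right $A$-module via the inclusion $A\subset B$) to chain
\[
B\otimes_AA\otimes_BF\simeq B\otimes_A(A\otimes B)\simeq B\otimes B=F,
\]
and symmetrically $F\otimes_BA\otimes_AB\simeq F$. Both isomorphisms are natural in $F$ and $B^{\rm e}$-linear, so they extend to the full statement.

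The main point requiring care will be that $A$ plays two genuinely different roles throughout: as the subring $B_0\subset B$, which provides the right $A$-action on $B$, and as the quotient $B/B_{>0}$, which provides the right $B$-action on $A$. I must keep these straight so that every tensor product is well-defined and each displayed isomorphism is linear for the correct module structures---for instance, $B\otimes_AA$ and $B$ agree as $(B,A)$-bimodules but not as $(B,B)$-bimodules. Once this bookkeeping is settled, nothing substantive remains; the lemma is a formal consequence of $A\otimes_BB\simeq A$ and $B\otimes_AA\simeq B$ combined with additivity and splitting of projectives.
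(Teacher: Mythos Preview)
Your proof is correct. The paper does not provide a proof for this lemma; it is stated as a self-evident observation (the internal label is literally ``clear''). Your reduction to the free bimodule $B\otimes B$ via the splitting of the surjection $B\otimes Q_0\otimes B\to Q$, followed by the elementary identities $A\otimes_BB\simeq A$ and $B\otimes_AA\simeq B$, is precisely the natural way to spell out the details the authors left implicit.
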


Let $B$, $P_\bullet$, and $A=B_0$ be as in subsection \ref{subsection BP}. The following observation is crucial.

\begin{lema}\label{lemmasplitting}
In the setup above, the following assertions hold.
\begin{itemize}
\item[(a)] There exist complexes
\begin{eqnarray*}
&\xymatrix{Q_\bullet=(Q_{d-1}\ar[r] & \cdots\ar[r] & Q_1\ar[r] & Q_0)}&\textrm{ and }\\
&\xymatrix{R_\bullet=(R_{d-1}\ar[r] & \cdots\ar[r] & R_1\ar[r] & R_0)}&\textrm{ in }\Cc^{\rm b}(\grproj B^{\rm e})
\end{eqnarray*}
and a morphism $\xymatrix{f:R_\bullet(-1)\ar[r] & Q_\bullet}$ in $\Cc^{\rm b}(\grproj B^{\rm e})$ such
that $P_\bullet$ is the mapping cone of $f$ and each $Q_i$ and $R_i$ are generated in degree zero. 
\item[(b)] We have $R_\bullet\simeq Q^{\vee}_\bullet[d-1]$ and
$Q_\bullet\simeq R^{\vee}_\bullet[d-1]$ in $\Cc^{\rm b}(\grproj B^{\rm e})$.
\end{itemize}
\end{lema}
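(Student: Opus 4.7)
Taking $P_\bullet$ to be the minimal graded projective resolution of $B$ as in the remark preceding the lemma, my plan is to use the self-duality~\eqref{selfduality} to pin down the internal degrees of the minimal generators of each $P_i$, and then read the decomposition directly from the degree-$0$ versus degree-$1$ summands. Writing $P_i=\bigoplus_j B^{\rm e}(-a_{i,j})e_{i,j}$ for primitive idempotents $e_{i,j}$ and integers $a_{i,j}$, the surjection $P_0\twoheadrightarrow B$ together with $B$ being generated in degree $0$ allows one to take $a_{0,j}=0$, and an induction on $i$ using minimality (the differential lands in the graded radical, and each syzygy $\Omega^i B$ sits in non-negative degrees) gives $a_{i,j}\ge 0$ throughout. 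The self-duality provides a term-wise isomorphism $P_i\simeq(P_{d-i})^{\vee}(-1)$, which at the level of multisets of generator degrees reads $\{a_{i,j}\}_j=\{1-a_{d-i,j}\}_j$; combined with $a_{d-i,j}\ge 0$, this forces $a_{i,j}\in\{0,1\}$ for every $i,j$.

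For each $i$, I would let $Q_i\subseteq P_i$ be the sum of summands generated in degree $0$ and $R_{i-1}(-1)\subseteq P_i$ the sum of those generated in degree $1$, so that $R_{i-1}$ itself is generated in degree $0$. For a degree-$0$ generator $g\in Q_i$, the image $d(g)\in P_{i-1}$ lies in degree $0$; but $R_{i-2}(-1)$, being generated in degree $1$ over the non-negatively graded algebra $B^{\rm e}$, has vanishing degree-$0$ component. Hence $d(Q_i)\subseteq Q_{i-1}$ by $B^{\rm e}$-linearity, so the differential of $P_\bullet$ has block form $\bigl(\bsm d_Q & f\\ 0 & d_R\esm\bigr)$, identifying $P_\bullet$ with the mapping cone of a chain map $f\colon R_\bullet(-1)\to Q_\bullet$ between complexes in $\Cc^{\rm b}(\grproj B^{\rm e})$. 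This yields~(a).

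For~(b), I would apply $(-)^{\vee}[d](-1)$ to the distinguished triangle $R_\bullet(-1)\to Q_\bullet\to P_\bullet\to R_\bullet(-1)[1]$ in $\Kk^{\rm b}(\grproj B^{\rm e})$. Using $(R_\bullet(-1))^{\vee}=R_\bullet^{\vee}(1)$ together with the standard identity $\mathrm{cone}(f)^{\vee}\simeq\mathrm{cone}(f^{\vee})[-1]$, and substituting the self-duality $P_\bullet\simeq P_\bullet^{\vee}[d](-1)$, one realises $P_\bullet$ a second time as the mapping cone of a morphism $Q_\bullet^{\vee}(-1)[d-1]\to R_\bullet^{\vee}[d-1]$, whose target is generated in degree $0$ and whose source in degree $1$. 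Matching this with the presentation from the previous paragraph, and invoking the canonicity of the decomposition of $P_\bullet$ by generator degree, yields $Q_\bullet\simeq R_\bullet^{\vee}[d-1]$; dualising gives the companion isomorphism $R_\bullet\simeq Q_\bullet^{\vee}[d-1]$. The main obstacle I expect is precisely this last matching: one must ensure that the two cone presentations of $P_\bullet$ agree as chain complexes rather than merely as graded bimodules, which requires careful use of the filtration by generator degree and the Krull--Schmidt property of $\grproj B^{\rm e}$ to promote the term-wise identification of degree-$0$ summands into an isomorphism of complexes respecting the differentials.
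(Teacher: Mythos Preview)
Your argument for (a) is exactly the paper's: minimality gives non-negative generator degrees, self-duality forces them into $\{0,1\}$, and the block-triangular differential follows because degree-$0$ generators cannot map nontrivially into a summand generated in degree $1$. For (b) the paper also dualises the cone, but dispatches your ``main obstacle'' by staying with short exact sequences of complexes rather than triangles: from $0\to Q_\bullet\to P_\bullet\to R_\bullet(-1)[1]\to 0$ and its dual $0\to R_\bullet^{\vee}[d-1]\to P_\bullet\to Q_\bullet^{\vee}(-1)[d]\to 0$ in $\Cc^{\rm b}(\grproj B^{\rm e})$, one observes $\Hom_{\Cc^{\rm b}}(Q_\bullet,Q_\bullet^{\vee}(-1)[d])=0$ and $\Hom_{\Cc^{\rm b}}(R_\bullet^{\vee}[d-1],R_\bullet(-1)[1])=0$ (sources generated in degree $0$, targets with vanishing degree-$0$ part), so the identity on $P_\bullet$ lifts and descends to the desired chain-level isomorphisms. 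This Hom-vanishing is precisely the formalisation of your ``canonicity of the degree decomposition'', so your approach is correct and essentially the same---you just need to phrase the matching as a lifting problem along these two exact sequences rather than as a comparison of cone presentations in $\Kk^{\rm b}$.
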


\begin{proof}
(a) Since the resolution $P_\bullet$ of $B$ is minimal, and since $B_i=0$ for any $i<0$,
each $P_i$ is generated in non-negative degrees. If $P_i$ has a generator in degree $a\ge0$, then
by the isomorphism~\eqref{selfduality} $P_{d-i}$ has a generator in degree $1-a$, which implies $1-a\ge0$.
Therefore $a$ has to be $0$ or $1$, and each $P_i$ is generated in degree $0$ or $1$.

For each $i=0,\ldots ,d$ we write $P_i:=P^0_i\oplus P^1_i(-1)$,
where all the indecomposable summands of $P^0_i$ and $P^1_i$ are generated in degree zero. 
By the isomorphism~\eqref{selfduality}, we have $P^1_i\simeq (P^0_{d-i})^\vee$ for any $i\in\ZZ$.
Since the $B^{\rm e}$-module $B$ is generated in degree zero, we have
$P^1_0=0$ and so $P^0_d=0$. Then  the map $d_i:P_i\rightarrow P_{i-1}$ can be written
$$\xymatrix{d_i:P_i^0\oplus P_i^1(-1)\ar[rr]^{\begin{bmatrix}a_i& b_i\\0&-c_i\end{bmatrix}}& & P^0_{i-1}\oplus P^1_{i-1}(-1)}$$
Therefore we have
$$ \xymatrix{P_\bullet= (P_d\ar[r] & P_{d-1}\ar[r] & \ldots \ar[r] & P_2\ar[r]^{d_2} & P_1\ar[r]^{d_1} & P_0) \\
Q_\bullet:= (0\ar[r] & P^0_{d-1}\ar[r]\ar[u] & \ldots\ar[r] & P^0_2\ar[r]^{a_2}\ar[u] & P^0_1\ar[r]^{a_1}\ar[u] & P^0_0)\ar[u] \\
R_\bullet(-1):= (0\ar[r] & P^1_{d}(-1)\ar[r]\ar[u] & \ldots\ar[r] &
P^1_3(-1)\ar[r]^{c_3}\ar[u]^{b_3} &
P^1_2(-1)\ar[r]^{c_2}\ar[u]^{b_2} & P^1_1(-1))\ar[u]^{b_1}}$$ Hence
$P_\bullet$ is the mapping cone of the morphism
$f:R_\bullet(-1)\rightarrow Q_\bullet.$ 

(b) We have an exact sequence
$$\xymatrix{0\ar[r]&Q_\bullet\ar[r]&P_\bullet\ar[r]&R_\bullet(-1)[1]\ar[r]&0}\quad \textrm{in } \Cc^{\rm b}(\grproj B^{\rm e}).$$
Applying $(-)^{\vee}(-1)[d]$ and using the isomorphism~\eqref{selfduality}, we have
an exact sequence
$$\xymatrix{0\ar[r]&R_\bullet^{\vee}[d-1]\ar[r]&P_\bullet\ar[r]&Q_\bullet^{\vee}(-1)[d]\ar[r]&0}\quad \textrm{in } \Cc^{\rm b}(\grproj B^{\rm e}).$$
Since $Q_\bullet$ is generated in degree zero and the degree zero part of $Q_\bullet^{\vee}(-1)[d]$ is zero, we have
$\Hom_{\Cc^{\rm b}(\grproj B^{\rm e})}(Q_\bullet,Q_\bullet^{\vee}(-1)[d])=0$.
Similarly $\Hom_{\Cc^{\rm b}(\grproj B^{\rm e})}(R_\bullet^{\vee}[d-1],R_\bullet(-1)[1])=0$ holds.
Thus we have a commutative diagram
$$\xymatrix{
0\ar[r]&Q_\bullet\ar[r]\ar@<.15em>[d]&P_\bullet\ar[r]\ar@{=}[d]&R_\bullet(-1)[1]\ar[r]\ar@<.15em>[d]&0\\
0\ar[r]&R_\bullet^{\vee}[d-1]\ar[r]\ar@<.15em>[u]&P_\bullet\ar[r]&Q_\bullet^{\vee}(-1)[d]\ar[r]\ar@<.15em>[u]&0}
$$
which implies $Q_\bullet\simeq R^{\vee}_\bullet[d-1]$ and $R_\bullet\simeq Q^{\vee}_\bullet[d-1]$.
\end{proof}

\begin{lema}\label{degree 0}
Let $Q_\bullet$ be as defined in Lemma \ref{lemmasplitting}. We have the following isomorphisms.
\begin{itemize}
\item[(a)] $A\otimes_BQ_\bullet\otimes_BA\simeq A$ in $\Dd(A^{\rm e})$.
\item[(b)] $A\otimes_BQ_\bullet\simeq B$ in $\Dd(\Gr A^{\rm op}\otimes B)$.
\end{itemize}
\end{lema}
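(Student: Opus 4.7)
The plan is to derive both parts from the termwise-split short exact sequence of complexes of graded projective $B^{\rm e}$-modules
\[0\to Q_\bullet\to P_\bullet\to R_\bullet(-1)[1]\to 0\]
supplied by Lemma~\ref{lemmasplitting}, combined with the reconstruction formulas from Lemma~\ref{clear}. The key observation is that taking graded degree zero annihilates the shifted factor: since $R_\bullet$ is generated in graded degree $0$, the complex $R_\bullet(-1)$ has graded components only in degrees $\ge 1$, so $(R_\bullet(-1))_0=0$ and hence $(Q_\bullet)_0=(P_\bullet)_0$ as complexes of $A^{\rm e}$-modules.

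For (a), I will show that this common complex is a projective $A^{\rm e}$-resolution of $A$. Each $Q_i$, being a projective graded $B^{\rm e}$-module generated in degree zero, is a direct summand of a finite sum of copies of $B\otimes B$; taking graded degree zero yields a summand of $(A\otimes A)^n=(A^{\rm e})^n$, so $(Q_i)_0$ is a projective $A^{\rm e}$-module. Since $(-)_0$ is exact, cohomology commutes with it, giving $H^i((P_\bullet)_0)\simeq (H^i(P_\bullet))_0$; as $P_\bullet$ resolves $B$, this is $B_0=A$ for $i=0$ and vanishes otherwise. Hence $(Q_\bullet)_0$ is a projective $A^{\rm e}$-resolution of $A$, and Lemma~\ref{clear}(a) identifies it with $A\otimes_B Q_\bullet\otimes_B A$, proving~(a).

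For (b), I will compute $A\otimes_B Q_\bullet$ via reconstruction. Lemma~\ref{clear}(b) gives $Q_\bullet\simeq (Q_\bullet\otimes_B A)\otimes_A B$, so applying $A\otimes_B-$ and then using Lemma~\ref{clear}(a) yields
\[A\otimes_B Q_\bullet\simeq (A\otimes_B Q_\bullet\otimes_B A)\otimes_A B=V_\bullet\otimes_A B,\]
where $V_\bullet:=(Q_\bullet)_0$. Each $V_i$ is projective, hence flat, as a right $A$-module, being a summand of $(A\otimes A)^n$ which is free over $A$ on the right. Therefore $-\otimes_A B$ preserves the quasi-isomorphism $V_\bullet\simeq A$ established in~(a), giving $V_\bullet\otimes_A B\simeq A\otimes_A B=B$. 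The grading is preserved since each $V_i$ is concentrated in graded degree zero and $V_\bullet\otimes_A B$ inherits its grading from $B$; moreover each $V_i\otimes_A B$ is a summand of $(A\otimes B)^n$, hence projective over $A^{\rm op}\otimes B$. This proves~(b).

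No single step is a sharp obstacle; the central conceptual point is that stripping off the Gorenstein-parameter shift $R_\bullet(-1)$ from the bimodule resolution $P_\bullet$ by taking graded degree zero leaves exactly a projective $A^{\rm e}$-resolution of $A$, after which both parts follow mechanically from Lemma~\ref{clear}.
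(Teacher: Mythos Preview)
Your proof is correct and follows essentially the same approach as the paper: you identify $(Q_\bullet)_0$ with $(P_\bullet)_0$ by observing that $R_\bullet(-1)$ vanishes in degree zero, recognize this as a projective $A^{\rm e}$-resolution of $A$, and then deduce (b) via the reconstruction isomorphism $A\otimes_B Q_\bullet\simeq (A\otimes_B Q_\bullet\otimes_B A)\otimes_A B$ from Lemma~\ref{clear}. The paper compresses your flatness argument in (b) into the single line $(A\otimes_B Q_\bullet\otimes_B A)\otimes_A B\simeq A\lten_A B\simeq B$, but the content is the same.
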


\begin{proof}
(a) Since $P_\bullet$ is isomorphic to the mapping cone of $f:R_\bullet(-1)\rightarrow Q_\bullet$, we have an isomorphism
$$(P_\bullet)_0\simeq {\sf Cone}((R_\bullet)_{-1}\rightarrow(Q_\bullet)_0)\quad
\textrm{in }\Cc^{\rm b}(\proj A^{\rm e})$$
where $(X)_\ell$ is the degree $\ell$ part of the complex
$X\in \Cc^{\rm b}(\grproj B^{\rm e})$. Since $B$ is only in non-negative
degrees, then so is $R_\bullet$. Hence we have
$$(P_\bullet)_0\simeq (Q_\bullet)_0\quad \textrm{in }\Cc^{\rm b}(\proj A^{\rm e}).$$

Since $P_\bullet\simeq B$ in $\Dd(\Gr B^{\rm e})$, we have $(P_\bullet)_0\simeq B_0=A$ in $\Dd(A^{\rm e})$.
Therefore we get $A\ten_BQ_\bullet\ten_B A\simeq (Q_\bullet)_0\simeq A$ in $\Dd(A^{\rm e})$ by Lemma \ref{clear}.

(b) We have the following isomorphisms in $\Dd(\Gr (A^{\rm op}\ten B))$:
\begin{align*}
A\ten_B Q_\bullet&\simeq (A\ten_B Q_\bullet\ten_B A)\ten_AB&\textrm{by Lemma}~\ref{clear}\\
&\simeq A\lten_AB\simeq B&\textrm{by (a)}.&\qedhere
\end{align*}
\end{proof}

\begin{prop}\label{global dimension of A}
We have $\gldim A\le d-1$.
\end{prop}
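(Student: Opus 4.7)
The plan is to exhibit an explicit projective resolution of $A$ as an $A^{\rm e}$-module of length at most $d-1$, using the complex $Q_\bullet$ constructed in Lemma \ref{lemmasplitting}.

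First, I would recall that $Q_\bullet$ is a complex in $\Cc^{\rm b}(\grproj B^{\rm e})$ of the form $(Q_{d-1}\to\cdots\to Q_1\to Q_0)$, concentrated in homological degrees $0,\ldots,d-1$, and each $Q_i$ is generated in internal degree zero. Applying the functor $A\otimes_B-\otimes_BA$ (which is exact on complexes of summands of $B^{\rm e}$-modules generated in degree zero, since such modules become projective $A^{\rm e}$-modules) produces a bounded complex of projective $A^{\rm e}$-modules of length at most $d-1$. By Lemma \ref{clear}(a), this complex is exactly the degree-zero part $(Q_\bullet)_0$ of $Q_\bullet$.

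Next, Lemma \ref{degree 0}(a) tells us that $A\otimes_BQ_\bullet\otimes_BA\simeq A$ in $\Dd(A^{\rm e})$. Combining these two observations, $(Q_\bullet)_0$ is a projective resolution of the $A^{\rm e}$-module $A$ of length at most $d-1$. Hence $\projdim_{A^{\rm e}}A\leq d-1$, which gives $\gldim A\leq d-1$ by the standard fact that the global dimension of a finite-dimensional algebra equals the projective dimension of the diagonal bimodule.

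There is essentially no obstacle here: the key structural work has already been done in Lemmas \ref{lemmasplitting} and \ref{degree 0}. The only thing to be careful about is the fact that each $Q_i$, being a graded projective $B^{\rm e}$-module generated in degree zero, is a summand of a direct sum of copies of $B\otimes B$, so that $A\otimes_BQ_i\otimes_BA$ is a summand of $A\otimes A$ and hence projective as an $A^{\rm e}$-module; this also shows that $A\otimes_B-\otimes_BA$ computes the derived tensor product when applied to $Q_\bullet$, justifying the quasi-isomorphism with $A\lten_BQ_\bullet\lten_BA$ needed to invoke Lemma \ref{degree 0}(a).
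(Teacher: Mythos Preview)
Your proof is correct and follows essentially the same approach as the paper: both use that $A\otimes_BQ_\bullet\otimes_BA$ is a projective resolution of the $A^{\rm e}$-module $A$ of length at most $d-1$ (via Lemma~\ref{degree 0}), and then conclude $\gldim A\le\projdim_{A^{\rm e}}A\le d-1$. Your write-up simply spells out the justifications (projectivity of $A\otimes_BQ_i\otimes_BA$, agreement of the ordinary and derived tensor products) that the paper leaves implicit.
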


\begin{proof}
By Lemma~\ref{degree 0} , $A\ten_B Q_\bullet\ten_B A$ is a projective resolution of the $A^{\rm e}$-module $A$. 
Thus we have $\gldim A\le\projdim_{A^{\rm e}}A\le d-1$. 
\end{proof}

\begin{lema}\label{degree 02}
Let $R_\bullet$ be as defined in Lemma \ref{lemmasplitting}. Then we have the following isomorphisms.
\begin{itemize}
\item[(a)] $A\otimes_BR_\bullet\otimes_BA\simeq\Theta$ in $\Dd(A^{\rm e})$.
\item[(b)] $A\otimes_BR_\bullet\simeq\Theta\ten_AB$ in $\Dd(\Gr A^{\rm op}\otimes B)$.
\end{itemize}
\end{lema}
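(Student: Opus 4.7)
The plan is to deduce both statements from the self-duality $R_\bullet\simeq Q_\bullet^\vee[d-1]$ of Lemma~\ref{lemmasplitting}(b), combined with the fact that $A\otimes_BQ_\bullet\otimes_BA$ already provides a projective resolution of $A$ as an $A^{\rm e}$-module (Lemma~\ref{degree 0}(a)).

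The main technical ingredient is the following compatibility between the duality $(-)^\vee=\Hom_{B^{\rm e}}(-,B^{\rm e})$ on $\grproj B^{\rm e}$ (restricted to modules generated in degree zero) and the duality $(-)^{\vee_A}=\Hom_{A^{\rm e}}(-,A^{\rm e})$ on $\proj A^{\rm e}$. Any $P\in\grproj B^{\rm e}$ generated in degree zero has the form $P\simeq B\otimes_AP_0\otimes_AB$ with $P_0$ a finitely generated projective $A^{\rm e}$-module; by tensor-hom adjunction one then obtains a natural isomorphism
\[A\otimes_BP^\vee\otimes_BA\;\simeq\;\Hom_{A^{\rm e}}(P_0,A^{\rm e})\;=\;P_0^{\vee_A},\]
and the functors $A\otimes_B-\otimes_BA$ and $B\otimes_A-\otimes_AB$ are mutually inverse equivalences between the subcategory of objects of $\grproj B^{\rm e}$ generated in degree zero and $\proj A^{\rm e}$ (this is the content of Lemma~\ref{clear}). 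Since each term of $Q_\bullet$ is generated in degree zero by construction, this produces an isomorphism of complexes
\[A\otimes_B Q_\bullet^\vee\otimes_B A\;\simeq\;(A\otimes_B Q_\bullet\otimes_B A)^{\vee_A}\quad\text{in }\Cc^{\rm b}(\proj A^{\rm e}).\]

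For (a), I apply the isomorphism $R_\bullet\simeq Q_\bullet^\vee[d-1]$ of Lemma~\ref{lemmasplitting}(b) and the display above to obtain
\[A\otimes_B R_\bullet\otimes_B A\;\simeq\;(A\otimes_B Q_\bullet\otimes_B A)^{\vee_A}[d-1]\;\simeq\;\RHom_{A^{\rm e}}(A,A^{\rm e})[d-1]\;\simeq\;\Theta\]
in $\Dd(A^{\rm e})$, where the middle step uses Lemma~\ref{degree 0}(a) together with the fact that $A\otimes_BQ_\bullet\otimes_BA$ is a bounded complex of finitely generated projective $A^{\rm e}$-modules, so that the naive dual computes $\RHom_{A^{\rm e}}(A,A^{\rm e})$. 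For (b), since each term of $R_\bullet$ is generated in degree zero, Lemma~\ref{clear}(b) gives
\[A\otimes_B R_\bullet\;\simeq\;(A\otimes_B R_\bullet\otimes_B A)\otimes_A B\;\simeq\;\Theta\otimes_A B\]
in $\Dd(\Gr A^{\rm op}\otimes B)$, the second isomorphism being (a).

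The main obstacle is purely bookkeeping: keeping track of which side of the bimodule structure is being used when one moves between $B^{\rm e}$-duality and $A^{\rm e}$-duality, and checking that all identifications are compatible with the $A^{\rm op}\otimes B$-action needed in (b). No further homological input is required beyond what has already been established.
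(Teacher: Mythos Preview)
Your proposal is correct and follows essentially the same approach as the paper. The paper's proof of (a) writes out the chain $A\otimes_B Q_\bullet^\vee\otimes_BA\simeq\Hom_{B^{\rm e}}(Q_\bullet,A^{\rm e})\simeq\Hom_{A^{\rm e}}(A\otimes_BQ_\bullet\otimes_BA,A^{\rm e})$ explicitly (using Lemma~\ref{clear}(b) for the second step), which is exactly your duality-compatibility observation unpacked; part (b) is identical to yours.
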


\begin{proof}
(a) We have the following isomorphisms in $\Dd(A^{\rm e})$:
\begin{align*}
A\ten_B R_\bullet \ten_B A [1-d]& \simeq A\ten_B Q^{\vee}_\bullet\ten_B A& \textrm{by Lemma}~\ref{lemmasplitting}\\
&\simeq  A\ten_B \Hom_{B^{\rm e}}(Q_\bullet, B^{\rm e})\ten_B A \\
&\simeq  \Hom_{B^{\rm e}}(Q_\bullet, A^{\rm e}) \\
&\simeq  \Hom_{B^{\rm e}}(B\ten_AA\ten_BQ_\bullet\ten_BA\ten_AB, A^{\rm e})&\textrm{by Lemma}~\ref{clear}\\
&\simeq  \Hom_{A^{\rm e}}(A\ten_BQ_\bullet\ten_BA, A^{\rm e})\\
&\simeq  \RHom_{A^{\rm e}}(A, A^{\rm e})&\textrm{by Lemma}~\ref{degree 0}.
\end{align*}

(b) We get the following isomorphisms in $\Dd(\Gr (A^{\rm op}\ten B))$:
\begin{align*}
A\ten_B R_\bullet&\simeq (A\ten_B R_\bullet\ten_BA)\ten_A B&\textrm{by Lemma}~\ref{clear}\\
&\simeq \Theta\ten_AB&\textrm{by (a)}.&\qedhere
\end{align*}
\end{proof}

Now we are ready to prove Proposition~\ref{proptriangle}.

By Lemma~\ref{lemmasplitting} there exists a triangle
$\xymatrix{R_\bullet(-1)\ar[r] & Q_\bullet \ar[r] & P_\bullet \ar[r]
& R_\bullet(-1)[1]}$ in $\Dd(\Gr B^{\rm e}).$ Applying the
functor $A\lten_B-$ to this triangle we get the triangle
$$\xymatrix{A\ten_B R_\bullet( -1 )\ar[r] & A\ten_B Q_\bullet\ar[r]
& A\ten_B P_\bullet\ar[r] & A\ten_B R_\bullet(-1)[1]}\quad
\textrm{in }\Dd(\Gr (A^{\rm op}\ten B)).$$

By Lemmas~\ref{degree 0} and~\ref{degree 02}, we get a commutative diagram
$$\xymatrix@C=2em{A\ten_B R_\bullet(-1)\ar[r]\ar[d]^\wr & A\ten_B
Q_\bullet\ar[r]\ar[d]^\wr & A\ten_B P_\bullet\ar[r]\ar[d]^\wr & A\ten_B
R_\bullet[1](-1)\ar[d]^\wr\\
\Theta\ten_AB(-1)\ar[r] & B\ar[r]^a & A\ar[r] & \Theta\ten_A B(-1)[1]}$$
in $\Dd(\Gr (A^{\rm op}\ten B))$ with the natural surjection $a$.
\qed

\medskip
We end this subsection with recording the following observation, which is not used in this paper and follows easily from Lemmas ~\ref{degree 0} and~\ref{degree 02}.

\begin{rema}
We have isomorphisms $Q_\bullet\simeq B\lten_AB$ and $R_\bullet\simeq B\ten_A\Theta\ten_AB$ in $\Dd(A^{\rm e})$.
\end{rema}

\subsection{Proof of Theorem \ref{thm_preproj_A}}\label{subsection proof}

From Proposition~\ref{proptriangle}, we have a triangle
$$\xymatrix{\Theta\ten_A B(-1)\ar[r]^-{\alpha} & B\ar[r]^a & A\ar[r] & \Theta\ten_A B(-1)[1]}\quad \textrm{in }
\Dd(\Gr (A^{\rm op}\ten B)).$$ 
Since $a$ is the natural surjection, $\alpha$ is an isomorphism except for the degree zero part.

For any $\ell\geq 1$ we use the following notation:
\[\Theta^\ell:=\underbrace{\Theta\ten_A\Theta\ten_A\cdots\ten_A\Theta}_{\ell\textrm{ times}}\in\Dd(A^{\rm e}).\]

\begin{dfa}\label{alpha and beta}
Let $\alpha_\ell:\Theta^\ell\ten_A B\to B(\ell)$ be a morphism in $\Dd(\Gr(A^{\rm op}\ten B))$ defined as the composition 
$$\xymatrix{\alpha_\ell:\Theta^\ell\ten_A B\ar[rr]^-{1_{\Theta^{\ell-1}}\ten_A \alpha(1)} &&
\Theta^{\ell-1}\ten_A B(1)\ar[rr]^-{1_{\Theta^{\ell-2}}\ten_A \alpha(2)} && \cdots\ar[r] & \Theta\ten_A B(\ell-1)\ar[r]^-{\alpha(\ell)}  & B(\ell).}$$ 
For any $\ell\ge0$, the degree zero part of $\alpha_\ell$ is an isomorphism in $\Dd(A^{\rm e})$:
\[\xymatrix{(\alpha_\ell)_0:(\Theta^\ell\ten_AB)_0=\Theta^\ell\ar[r]^-\sim& B(\ell)_0=B_\ell}.\]
Applying $H^0$, we have an isomorphism in $\Mod (A^{\rm e})$:

$$\xymatrix{\beta_\ell:=H^0(\alpha_\ell)_0:H^0(\Theta^\ell)\ar[r]^-\sim & B_\ell.}$$
\end{dfa}

Now we are ready to prove Theorem~\ref{thm_preproj_A}.

(a) This is already shown in Proposition~\ref{global dimension of A}.

(b)
Since we have an isomorphism $(\alpha_\ell)_0:\Theta^\ell\to B_\ell$ in $\Dd(A^{\rm e})$ for any $\ell\ge0$, we have that $\BPi_d(A)=\Talg_\Lambda\Theta$ is concentrated in degree zero.

(c) Consider the following diagram for any $\ell,m\in\ZZ$:
$$\xymatrix@C=4em{
H^0(\Theta^\ell)\ten_AH^0(\Theta^m)\ar[r]_-\sim^-{1_{H^0(\Theta^\ell)}\ten_A\beta_m}\ar[d]_{\wr}&
H^0(\Theta^\ell)\ten_AB_m\ar[r]_-\sim^-{\beta_\ell\ten_A1_{B_m}}\ar[dr]_{H^0(\alpha_\ell)_m}&
B_\ell\ten_AB_m\ar[d]^{\rm mult.}\\
H^0(\Theta^{\ell+m})\ar[rr]^{\beta_{\ell+m}}_\sim && B_{\ell+m}
}$$
The left square commutes since $\alpha_{\ell+m}=\alpha_\ell(m)\circ(1_{\Theta^\ell}\ten_A\alpha_m)$ holds,
and the right triangle commutes since $H^0(\alpha_\ell):H^0(\Theta^\ell)\ten_AB\to B(\ell)$ is a morphism of right $B$-modules.
In particular, the $k$-linear isomorphism
$$\xymatrix{\bigoplus_{\ell\ge0}\beta_\ell:\Pi_d(A)=\bigoplus_{\ell\ge0}H^0(\Theta^\ell)\ar[r]^-\sim&B=\bigoplus_{\ell\ge0}B_\ell}$$
is compatible with the multiplication.
\qed

\medskip
The next lemma, which we will use later, follows immediately from the definitions of $\alpha_\ell$ and $\beta_\ell$.

\begin{lema}\label{commutative1}
\begin{itemize}
\item[(a)] The following diagram is commutative:
\[\xymatrix{H^0(\Theta^\ell)\ar[r]^-\sim\ar[dd]^{\beta_\ell}_{\wr} & \Hom_{\Dd(A)}(A,\Theta^\ell)\ar[d]^{-\lten_AB}\\
& \Hom_{\Dd(\Gr B)}(B, \Theta^\ell\ten_A B)\ar[d]^{\alpha_\ell\cdot}\\ B_\ell\ar[r]^-\sim &\Hom_{\Dd(\Gr B)}(B,B(\ell))
}\]
\item[(b)] $\beta_\ell$ is equal to the composition
$$\xymatrix{\beta_\ell:H^0(\Theta^\ell)\ar[r]_-\sim & H^0(\Theta)\ten_A\cdots\ten_AH^0(\Theta)\ar[rr]^-{\beta_1\ten_A\cdots\ten_A\beta_1}_-\sim && B_1\ten_A \ldots \ten_A B_1\ar[r]^-{\rm mult.} & B_\ell.}$$
\end{itemize}
\end{lema}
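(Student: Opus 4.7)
The authors signal that this follows immediately from the definitions of $\alpha_\ell$ and $\beta_\ell$ in Definition~\ref{alpha and beta}, so my plan is simply to unwind these definitions carefully, treating the two parts separately.

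For part (a), I would chase an arbitrary element $x \in H^0(\Theta^\ell)$ through both paths of the diagram. Under the top horizontal isomorphism, $x$ corresponds to a morphism $\tilde x : A \to \Theta^\ell$ in $\Dd(A)$ (sending $1_A$ to $x$). The right vertical arrow produces $\tilde x \lten_A B : B \to \Theta^\ell \ten_A B$, and post-composition with $\alpha_\ell$ yields a morphism $g : B \to B(\ell)$ in $\Dd(\Gr B)$. The bottom horizontal isomorphism $B_\ell \simeq \Hom_{\Dd(\Gr B)}(B, B(\ell))$ identifies $g$ with its value on $1 \in B_0$, which by construction is $(\alpha_\ell)_0(\tilde x(1_A))$. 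Since $\tilde x(1_A)$ represents $x$ in $H^0(\Theta^\ell)$ and by Definition~\ref{alpha and beta} we have $H^0((\alpha_\ell)_0) = \beta_\ell$, the result is $\beta_\ell(x)$, matching the image along the left-then-bottom path.

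For part (b), I would exploit the recursive structure of $\alpha_\ell$: the defining composition can be rewritten as $\alpha_\ell = \alpha(\ell) \circ (1_\Theta \ten_A \alpha_{\ell-1})$. Restricting to degree zero gives $(\alpha_\ell)_0 = (\alpha(\ell))_0 \circ (1_\Theta \ten_A (\alpha_{\ell-1})_0)$, where $(\alpha(\ell))_0$ is the degree-$\ell$ component of the original $\alpha$, i.e.\ a map $\Theta \ten_A B_{\ell-1} \to B_\ell$. After identifying $H^0(\Theta) \simeq B_1$ via $\beta_1$, this outer factor becomes the multiplication $B_1 \ten_A B_{\ell-1} \to B_\ell$ of $B$, since $\alpha$ is the map arising from the triangle of Proposition~\ref{proptriangle} that encodes this multiplication. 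By induction, the inner factor becomes $1_{B_1} \ten \beta_{\ell-1}$. Iterating peels off one copy of $\beta_1$ at each step and accumulates the iterated multiplications in $B$, giving precisely the factorization claimed.

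I do not expect any serious obstacle; the only minor technical check is that $H^0$ interacts correctly with the tensor products $\ten_A$ appearing above, namely that $H^0(\Theta^\ell) \simeq H^0(\Theta)^{\ten_A \ell}$ and that $H^0(\Theta \ten_A B_{\ell-1}) \simeq H^0(\Theta) \ten_A B_{\ell-1}$. This is automatic because $(\alpha_\ell)_0$ is already known to be an isomorphism in $\Dd(A^{\rm e})$ by Definition~\ref{alpha and beta}, so each $\Theta^\ell$ is quasi-isomorphic to its $H^0$ and no higher Tor contributions arise. The lemma is in essence a bookkeeping statement reflecting the facts that $\alpha_\ell$ was built as an iterated version of $\alpha$ and that $\beta_\ell$ is defined as $H^0$ of its degree zero part.
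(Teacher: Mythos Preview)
Your proposal is correct and aligns with the paper's approach: the paper does not give a proof beyond the remark that the lemma follows immediately from the definitions of $\alpha_\ell$ and $\beta_\ell$, and your argument is precisely an unwinding of those definitions. One minor imprecision: your recursive decomposition in (b) should carry a degree shift, i.e.\ $\alpha_\ell=\alpha(\ell)\circ(1_\Theta\ten_A\alpha_{\ell-1}(1))$ rather than $\alpha(\ell)\circ(1_\Theta\ten_A\alpha_{\ell-1})$, but this does not affect the degree-zero part you extract, so the argument goes through unchanged.
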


\section{Main results}

Let $B=\bigoplus_{\ell\geq 0}B_\ell$ be a positively $\ZZ$-graded algebra such that $\dim_k B_0<\infty$. Let $A:=B_0$ and let $e\in A$ be an idempotent. 
Assume that the conditions (A1*), (A2) and (A3) are satisfied, and in addition
\medskip

\hspace{.5cm}(A4) $eA(1-e)=0$.

\noindent
That is, we have an isomorphism of algebras $A\simeq \begin{bmatrix} eAe & 0\\ (1-e)Ae&\A\end{bmatrix}$. 
Combining Proposition~\ref{global dimension of A} and $(A4)$ we immediately get that $\gldim \A\leq d-1$.
Moreover recall from Section~2 that $C:=eBe$ is also noetherian and  that we have $Be\in\CM(C)$ and $eB\in\CM(C^{\rm op})$.

The aim of this section is to prove the following result.

\begin{thma}\label{main diagram}
Under assumptions (A1*), (A2), (A3) and (A4), we have the following.
\begin{itemize}
\item[(a)] The functor $\xymatrix{F:\Db(\A)\ar[r]^-{\rm Res.} & \Db(A)\ar[rr]^{-\lten_ABe} && \Db(\gr C)\ar[r] & \underline{\CM}^\ZZ(C)}$ is a triangle equivalence.
Moreover $Be$ is a tilting object in $\underline{\CM}^\ZZ(C)$.
\item[(b)] There exists a triangle equivalence $G:\Cc_{d-1}(\A)\to \underline{\CM}(C)$ making the diagram
$$\xymatrix{\Db(\A)\ar[rr]^{F}_\sim\ar[d]^{\pi} && \underline{\CM}^\ZZ(C)\ar^{\rm nat.}[d]\\ \Cc_{d-1}(\A) \ar[rr]^{G}_\sim && \underline{\CM}(C)}$$
commutative, where $\Cc_{d-1}(\A)$ is the generalized $(d-1)$-cluster category of $\A$.
\end{itemize}
\end{thma}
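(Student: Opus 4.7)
The plan is to prove part~(a) by establishing $Be$ as a tilting object in $\underline{\CM}^\ZZ(C)$ and identifying the corresponding tilting equivalence with $F$, then derive part~(b) via the universal property of the generalized cluster category.

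For~(a), I verify three tilting conditions for $Be$. The endomorphism identity $\End_{\underline{\CM}^\ZZ(C)}(Be)\simeq\underline{A}$ follows from Proposition~\ref{cluster tilting graded CM}(b) with $\ell=0$. The vanishing $\Hom_{\underline{\CM}^\ZZ(C)}(Be,Be[i])=0$ for $1\leq i\leq d-2$ follows from the $(d-1)$-cluster tilting property (Proposition~\ref{cluster tilting graded CM}(c)); for the remaining $i$, I invoke Serre duality on $\underline{\CM}^\ZZ(C)$, whose Serre functor combines $[d-1]$ with a grading twist inherited from the Gorenstein-parameter-$1$ structure of $B$. Together with Proposition~\ref{cluster tilting graded CM}(b) and the positivity of $B$, this reduces the required vanishings to $\underline{B}_\ell=0$ for appropriate negative $\ell$. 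For generation, Proposition~\ref{cluster tilting graded CM}(c) shows that $\add\{Be(n)\mid n\in\ZZ\}$ generates $\underline{\CM}^\ZZ(C)$, and each $Be(n)$ lies in $\thick Be$ by iterating the triangle of Proposition~\ref{proptriangle}: the cone terms $Ae(n)$ become perfect in $\underline{\CM}^\ZZ(C)$ because $Ae$ has finite projective dimension over $eAe$, which follows from $\gldim A\le d-1$ combined with the triangular structure $A\simeq\begin{bmatrix}eAe&0\\(1-e)Ae&\underline{A}\end{bmatrix}$ provided by~(A4).

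To identify the resulting tilting equivalence with $F$, I compute $F(\underline{A})=\underline{A}\lten_A Be$ using the short exact sequence $0\to AeA\to A\to\underline{A}\to 0$ of $A$-bimodules and the canonical isomorphism $AeA\simeq Ae\otimes_{eAe}eA$: one obtains
$$AeA\lten_A Be\simeq Ae\lten_{eAe}(eA\lten_A Be)\simeq Ae\lten_{eAe}C,$$
where the last identification uses $eA\lten_A Be=eBe=C$ (valid because $eA$ is right $A$-projective). The same finite-projective-dimension argument as above shows $Ae\lten_{eAe}C$ is perfect in $\gr C$, whence $F(\underline{A})\simeq Be$ in $\underline{\CM}^\ZZ(C)$. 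Keller's version of Rickard's tilting theorem then identifies $F$ with the tilting equivalence, completing~(a).

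For~(b), the key observation is that under $F$ the grading shift $(1)$ on $\underline{\CM}^\ZZ(C)$ corresponds to the inverse cluster shift $\SSS_{d-1}^{-1}$ on $\Db(\underline{A})$, as witnessed by the triangle of Proposition~\ref{proptriangle} combined with Theorem~\ref{thm_preproj_A} (which identifies $B\simeq\Pi_d(A)$ and so realizes $B_\ell$ as an iterated tensor power of $\Theta_{d-1}(A)$). Composing $F$ with the forgetful functor $\underline{\CM}^\ZZ(C)\to\underline{\CM}(C)$ thus factors through the projection $\pi:\Db(\underline{A})\to\Cc_{d-1}(\underline{A})$, defining $G$ and making the desired square commute. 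To conclude that $G$ is a triangle equivalence, I apply Proposition~\ref{criterion for equivalence}: both $\Cc_{d-1}(\underline{A})$ (by Theorem~\ref{cluster category}(b)) and $\underline{\CM}(C)$ (by Theorem~\ref{cluster tilting CM}(d)) admit $(d-1)$-cluster tilting objects, and the restriction of $G$ to them is an equivalence because the endomorphism algebras match via $\End_{\Cc_{d-1}(\underline{A})}(\pi(\underline{A}))\simeq\Pi_d(\underline{A})\simeq\underline{B}\simeq\End_{\underline{\CM}(C)}(Be)$, using Theorem~\ref{thm_preproj_A}(c) and Theorem~\ref{cluster tilting CM}(c). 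The main obstacles are extracting the correct Serre duality on $\underline{\CM}^\ZZ(C)$ (which drives the Hom vanishings in the tilting step), and precisely matching the grading shift with $\SSS_{d-1}$ under $F$ for part~(b); both rely on a careful use of the CY-Gorenstein-parameter-$1$ structure of $B$ together with the triangle of Proposition~\ref{proptriangle}.
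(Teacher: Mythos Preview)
Your approach is genuinely different from the paper's and has real gaps.

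\textbf{Part (a), Ext vanishing.} You invoke Serre duality on $\underline{\CM}^\ZZ(C)$ to dispose of $\Hom(Be,Be[i])$ for $i\geq d-1$ and $i<0$. But no Serre functor on $\underline{\CM}^\ZZ(C)$ is established in the paper; indeed the paper only deduces that $\underline{\CM}(C)$ is $(d-1)$-Calabi-Yau \emph{as a consequence} of Theorem~\ref{main diagram}. To get graded Serre duality of the form $(a)[d-1]$ you would need to know the graded $a$-invariant of $C$, which is not available at this point. The paper sidesteps this entirely by never proving $Be$ is tilting.

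\textbf{Part (a), generation.} Your argument applies $-\ten_B Be$ to the triangle of Proposition~\ref{proptriangle} and asserts the cone $Ae$ is perfect over $C$ because $\projdim_{eAe}Ae<\infty$. But the right $C$-action on $A\ten_B Be=Ae$ is via the projection $C\twoheadrightarrow C_0=eAe$ (restriction, not induction), and finite projective dimension over $eAe$ does \emph{not} transfer to finite projective dimension over $C$: for a non-regular Gorenstein $C$, finite-length modules typically have infinite projective dimension, so $Ae\notin\per C$. The paper instead proves (Lemma~\ref{lemM}, Proposition~\ref{prop cone perfect}) that a \emph{different} cone---that of $\A\lten_A Be(1)\to\underline{\Theta}\lten_A Be$---is perfect, precisely because it lies in $\thick(eA)\lten_A Be=\per C$. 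Your identification $AeA\simeq Ae\ten_{eAe}eA$ and the computation $AeA\lten_A Be\simeq Ae\lten_{eAe}C\in\per C$ are correct and do justify $F(\A)\simeq Be$, but that is induction along $eAe\hookrightarrow C$, not the restriction that appears in your generation step.

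\textbf{Part (b).} You conclude that $G$ restricts to an equivalence on the cluster tilting objects because the endomorphism algebras $\Pi_d(\A)$ and $\underline{B}$ are abstractly isomorphic. This is insufficient: Proposition~\ref{criterion for equivalence} requires that $G$ \emph{induce} the equivalence, i.e.\ that $G_{\pi\A,\pi\A}$ be an isomorphism. The paper devotes subsection~\ref{subsec3} (Lemmas~\ref{1st commutativity}--\ref{3rd commutativity} and Proposition~\ref{big commutative diagram}) to exactly this: showing via careful diagram chases that $F_{\underline{\Theta}^m,\underline{\Theta}^\ell}$ is bijective, which then descends to $G$. Also, $\Pi_d(\A)\simeq\underline{B}$ is not Theorem~\ref{thm_preproj_A}(c) (that gives $\Pi_d(A)\simeq B$); the passage to $\A$ and $\underline{B}$ is Lemma~\ref{commutativity B E} and its corollary.

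\textbf{What the paper does instead.} Rather than establishing $Be$ as a tilting object, the paper shows directly that $F$ restricts to an equivalence between the $(d-1)$-cluster tilting subcategories $\add\{\underline{\Theta}^\ell\}\subset\Db(\A)$ and $\add\{Be(\ell)\}\subset\underline{\CM}^\ZZ(C)$ (density via Proposition~\ref{compatibility degree theta}, full faithfulness via Proposition~\ref{big commutative diagram}), and then invokes Proposition~\ref{criterion for equivalence}. This avoids both the Serre duality and the generation issues.
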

As a consequence we obtain that $\underline{\CM}(C)$ is $(d-1)$-Calabi-Yau.

\subsection{Notations and plan of the proof}

Let us start with some notations which we use in the proof. 

We denote as before by $\Theta=\Theta_{d-1}(A)$ a projective resolution of $\RHom_{A^{\rm e}}(A,A^{\rm e})[d-1]$ in $\Dd (A^{\rm e})$, and by $\underline{\Theta}=\Theta_{d-1}(\A)$ a projective resolution of $\RHom_{\A^{\rm e}}(\A,\A^{\rm e})[d-1]$ in $\Dd(\A^{\rm e})$.
For $\ell\geq 1$ we put
\[\Theta^\ell:=\underbrace{\Theta\ten_A\Theta\ten_A\cdots\ten_A\Theta}_{\ell\textrm{ times}}\in\Dd(A^{\rm e})\quad \textrm{and}\quad\underline{\Theta}^\ell:=\underbrace{\underline{\Theta}\ten_{\A}\underline{\Theta}\ten_{\A}\cdots\ten_{\A}\underline{\Theta}}_{\ell\textrm{ times}}\in\Dd(\underline{A}^{\rm e}).\]
We denote by $\Theta^{-1}$ a projective resolution of $DA[1-d]$ in $\Dd (A^{\rm e})$,
and by $\underline{\Theta}^{-1}$ a projective resolution of $D\A[1-d]$ in $\Dd (\A^{\rm e})$.
For $\ell\geq 1$ we put
\[\Theta^{-\ell}=\underbrace{\Theta^{-1}\ten_A\ldots\ten_A\Theta^{-1}}_{\ell \textrm{ times}}\in \Dd(\A^{\rm e})\ \mbox{ and }\ 
\underline{\Theta}^{-\ell}=\underbrace{\underline{\Theta}^{-1}\ten_{\A}\ldots\ten_{\A}\underline{\Theta}^{-1}}_{\ell \textrm{ times}}\in \Dd(\A^{\rm e}).\]
Then for any $\ell,m\in \ZZ$ we have isomorphisms $\Theta^{\ell}\ten_{A}\Theta^{m}\simeq \Theta^{\ell+m}$ in $\Dd(A^{\rm e})$ and
$\underline{\Theta}^{\ell}\ten_{\A}\underline{\Theta}^{m}\simeq \underline{\Theta}^{\ell+m}$ in $\Dd(\A^{\rm e})$.

\bigskip

The proof of Theorem~\ref{main diagram} is given in the next subsections. It consists of several steps which we outline here for the convenience of the reader. 

In subsection \ref{subsec1}, we construct for all $\ell\geq 0$  an isomorphism 
\begin{equation}\label{iso1} \Hom_{\Dd(\A)}(\A,\underline{\Theta}^\ell)\simeq \B_\ell \quad \textrm{(Lemma \ref{commutativity B E})}\end{equation} compatible with composition in $\Dd(\A)$ and product in $\B$.

\medskip

In subsection \ref{subsec2} we construct a map $\A\lten_A Be(1)\to \underline{\Theta}\lten_A Be$ in $\Dd(\Gr(\A^{\rm op}\ten C))$ whose cone is perfect as an object in $\Dd(\Gr C)$ (Proposition~\ref{prop cone perfect}).
With $F$ as in Theorem \ref{main diagram}(a), it gives us a commutative square for any $\ell\in\ZZ$
\begin{equation*}\label{commutative square}
\xymatrix{\Db(\A)\ar[rr]^{F} \ar[d]_{-\ten_{\A}\underline{\Theta}^\ell} && \underline{\CM}^\ZZ(C)\ar[d]^{(\ell)}\\ \Db(\A)\ar[rr]^{F} && \underline{\CM}^\ZZ(C) } \quad \textrm{(Proposition~\ref{compatibility degree theta})}\end{equation*}
and an isomorphism
\begin{equation}\label{iso2} F(\underline{\Theta}^\ell)\simeq Be(\ell) \quad \textrm{(Proposition~\ref{compatibility degree theta}).}\end{equation}
Moreover we can use this to show that $F$ induces a triangle functor $G:\Cc_{d-1}(\A)\to \underline{\CM}(C)$ (Proposition~\ref{existence of F}).
\medskip

In subsection \ref{subsec3} we show that the isomorphisms (\ref{iso1}) and (\ref{iso2}) are compatible with the map $F_{\A,\underline{\Theta}^\ell}$ for any $\ell\geq 0$, that is, there is a commutative diagram
\begin{equation*}\label{iso3}\xymatrix{\Hom_{\Db(\A)}(\A,\underline{\Theta}^\ell)\ar[rr]^-{F_{\A,\underline{\Theta}^\ell}} \ar[d]^\wr_{(\ref{iso1})}&& \Hom_{\underline{\CM}^\ZZ(C)}(F(\A),F(\underline{\Theta}^\ell))\ar[d]^\wr_{(\ref{iso2})} \\ \underline{B}_\ell \ar[rr]^-\sim_-{\rm Prop. \ref{cluster tilting graded CM}(b)}&& \Hom_{\underline{\CM}^\ZZ(C)}(Be,Be(\ell)) }.\end{equation*}
It implies that the map $F_{\A,\underline{\Theta}^\ell}$ is an isomorphism (Proposition \ref{big commutative diagram}).

\medskip

The last step of the proof consists of using $(d-1)$-cluster tilting subcategories in the categories $\Db(\A)$ and $\underline{\CM}^\ZZ(C)$, (resp. $\Cc_{d-1}(\A)$ and $\underline{\CM}(C)$) and Proposition~\ref{criterion for equivalence} to show that $F:\Dd(\A)\to \underline{\CM}^\ZZ(C)$ (resp. $G:\Cc_{d-1}(\A)\to \underline{\CM}(C)$) is a triangle equivalence.

\subsection{Preprojective algebras}\label{subsec1}

Using the following observation, we identify $\A\ten_A\Theta\ten_A\A$ and $\underline{\Theta}$ in the rest of this section.

\begin{lema}\label{lem3}
We have an isomorphism $\A\ten_A\Theta\ten_A\A\xrightarrow{}\underline{\Theta}$ in $\Dd(\A^{\rm e})$.
\end{lema}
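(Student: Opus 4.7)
I would prove the lemma by reinterpreting both sides via the formulas $\Theta\simeq\RHom_{A^{\rm e}}(A,A^{\rm e})[d-1]$ and $\underline{\Theta}\simeq\RHom_{\A^{\rm e}}(\A,\A^{\rm e})[d-1]$, and then exchanging tensor and Hom through the adjunction induced by the algebra surjection $\pi:A\twoheadrightarrow\A$. Because $\Theta$ is a bounded complex of projective $A^{\rm e}$-modules, its terms are left and right $A$-projective, so $\A\ten_A\Theta\ten_A\A$ already represents the derived object $\A\lten_A\Theta\lten_A\A$; thus there is no harm in passing to the derived category from the start.

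The first ingredient I would set up is smoothness of $A$: since $\gldim A\le d-1$ by Proposition~\ref{global dimension of A} and $A$ is finite dimensional, $A\in\per A^{\rm e}$, which gives the standard identity
\[\RHom_{A^{\rm e}}(A,A^{\rm e})\lten_{A^{\rm e}}M\simeq\RHom_{A^{\rm e}}(A,M)\quad\text{for all }M\in\Dd(A^{\rm e}).\]
The second ingredient is that, since $\pi:A\to\A$ induces an algebra surjection $A^{\rm e}\twoheadrightarrow\A^{\rm e}$ and the restriction $\Dd(\A^{\rm e})\hookrightarrow\Dd(A^{\rm e})$ is fully faithful, the left and right adjoints of restriction are computed by
\[\A\lten_A M\lten_A\A\simeq\A^{\rm e}\lten_{A^{\rm e}}M\quad\text{and}\quad\RHom_{A^{\rm e}}(M,N)\simeq\RHom_{\A^{\rm e}}(\A\lten_AM\lten_A\A,N),\]
for $M\in\Dd(A^{\rm e})$ and $N\in\Dd(\A^{\rm e})$, where $\A^{\rm e}$ is regarded as an $A^{\rm e}$-module through $\pi\otimes\pi$ and $N$ is viewed in $\Dd(A^{\rm e})$ via restriction.

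With these in hand the lemma follows from the chain of isomorphisms
\begin{align*}
\A\lten_A\Theta\lten_A\A
&\simeq \RHom_{A^{\rm e}}(A,A^{\rm e})[d-1]\lten_{A^{\rm e}}\A^{\rm e}\\
&\simeq \RHom_{A^{\rm e}}(A,\A^{\rm e})[d-1]\\
&\simeq \RHom_{\A^{\rm e}}(\A\lten_AA\lten_A\A,\A^{\rm e})[d-1]\\
&\simeq \RHom_{\A^{\rm e}}(\A,\A^{\rm e})[d-1]\simeq\underline{\Theta}
\end{align*}
in $\Dd(\A^{\rm e})$, using $\A\lten_AA\lten_A\A\simeq\A$ in the last step. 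The main thing to verify is the identification $\A\lten_A-\lten_A\A\simeq\A^{\rm e}\lten_{A^{\rm e}}-$ in the derived category, which relies on $\A$ having finite projective dimension as both a left and a right $A$-module (guaranteed by $\gldim A<\infty$); once this and the left/right bimodule conventions are pinned down, every remaining step is routine.
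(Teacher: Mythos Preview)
Your overall strategy---passing through $\RHom_{A^{\rm e}}(A,-)$ and the adjunction for $A^{\rm e}\twoheadrightarrow\A^{\rm e}$---is sound, but there is a genuine gap at the step $\A\lten_AA\lten_A\A\simeq\A$, which is the same as $\A\lten_A\A\simeq\A$. You treat the full faithfulness of restriction $\Dd(\A^{\rm e})\to\Dd(A^{\rm e})$ as if it were automatic from the surjectivity of $A\to\A$; it is not. At the abelian level $\Mod\A\hookrightarrow\Mod A$ is always fully faithful, but at the derived level one needs $\Tor^A_i(\A,\A)=0$ for $i>0$, equivalently that $AeA$ is a stratifying ideal. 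For an arbitrary idempotent in a finite dimensional algebra of finite global dimension this can fail (the condition is $Ae\lten_{eAe}eA\simeq AeA$, not automatic). In the setting of Section~4 it is exactly hypothesis (A4) that supplies this: from $eA(1-e)=0$ one gets $eA=eAe$, hence $AeA=Ae$, so $\A=A/Ae\simeq A(1-e)$ as \emph{left} $A$-modules; thus $\A$ is left $A$-projective and $\A\lten_A\A=\A\otimes_A\A=\A$. Once you insert this use of (A4), your chain of isomorphisms is correct.

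The paper's argument is different on the surface but rests on the same fact. It works one side at a time via $\Theta\simeq\RHom_A(DA,A)[d-1]$, takes an injective resolution $I_\bullet$ of $\A$ over $\A^{\rm e}$, and uses (A4) to say that $I_\bullet$ is also an injective resolution of $\A$ as a right $A$-module---which is precisely the statement that restriction preserves injectives, equivalent to $\A$ being flat as a left $A$-module. So both proofs hinge on the same consequence of (A4); the paper invokes it explicitly while you left it implicit. Your bimodule/adjunction packaging is arguably cleaner, but you must cite (A4) at the crucial point.
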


\begin{proof}
We have the following isomorphism $$\A\ten_A \Theta\simeq \RHom_{A}(DA,\A)[d-1]\quad \textrm{in }\Dd(\A^{\rm op}\ten A).$$
Let $I_\bullet$ be an injective resolution of $\A$ as an
$\A^{\rm e}$-module. It follows from (A4) that $I_\bullet$ is also an injective
resolution of $\A$ as an $A$-module.
Hence we have the following isomorphisms in $\Dd(\A^{\rm e})$:
\begin{align*}
\A\ten_A\Theta\ten_A\A[1-d]& \simeq \RHom_A(DA,\A)\lten_A \A\\
 &\simeq  \Hom_A(DA,I_\bullet)\ten_A\A \\
 & \simeq  \Hom_{A^{\rm op}}(D I_\bullet,A)\ten_A \A \\
 & \simeq  \Hom_{ A^{\rm op}}(DI_\bullet, \A)\\
 &\simeq  \Hom_{\A^{\rm op}}(DI_\bullet, \A)\\
 & \simeq  \Hom_{\A}(D\A,I_\bullet) \simeq\underline{\Theta}[1-d].&&\qedhere
\end{align*}
\end{proof}

Denote by $p_0:A\to\A$ the natural projection in $\Mod (A^{\rm e})$.  For $\ell\geq 1$ we define the map $p_\ell:\Theta^\ell\to \underline{\Theta}^\ell$ in $\Dd(A^{\rm e})$ as the following composition: 
$$\xymatrix{\Theta^\ell\simeq A\ten_A \Theta\ten_A A\ten_A \Theta\ten_A\cdots\ten_A \Theta\ten_A A\ar[d]^-{p_0\ten_A 1_\Theta\ten_A p_0\ten_A\cdots \ten_A p_0}  \\  \A\ten_A \Theta\ten_A \A\ten_A \Theta\ten_A\cdots\ten_A \Theta\ten_A \A\ar[d]^{\wr}   \\ 
(\A\ten_A \Theta\ten_A \A)\ten_{\A}(\A\ten_A  \Theta\ten_A\cdots \ten_{\A}(\A\ten_A \Theta\ten_A \A)\simeq \underline{\Theta}^\ell.} $$  

\begin{lema}\label{commutativity B E}
Let $\beta_\ell:H^0(\Theta^\ell)\xrightarrow{\sim} B_\ell$ be as in Definition \ref{alpha and beta}.
Then there exists an isomorphism $H^0(\underline{\Theta}^\ell)\xrightarrow{\sim} \B_\ell$ making the following diagram commutative.
$$\xymatrix{H^0(\Theta^\ell)\ar[rr]_-\sim^-{\beta_\ell}\ar[d]^{H^0(p_\ell)} && B_\ell\ar[d]^{\rm nat.} \\ H^0(\underline{\Theta}^\ell)\ar[rr]_-\sim && \B_\ell.}$$
\end{lema}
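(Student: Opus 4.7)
The plan is to identify $H^0(\underline{\Theta}^\ell)$ with $\B_\ell$ by first expressing $\underline{\Theta}^\ell$ as an alternating derived tensor via Lemma~\ref{lem3}, then computing its $H^0$ using the fact that $\Theta^\ell$ is a projective bimodule resolution of $B_\ell$ (Theorem~\ref{thm_preproj_A}), and finally matching the result with $\B_\ell=B_\ell/(BeB)_\ell$ via the multiplication isomorphism of Lemma~\ref{commutative1}(b).

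More concretely, I would begin by applying Lemma~\ref{lem3} iteratively, combined with $\A\lten_{\A}\A\simeq\A$, to obtain an isomorphism in $\Dd(\A^{\rm e})$
\[\underline{\Theta}^\ell\simeq\A\lten_A\Theta\lten_A\A\lten_A\Theta\lten_A\cdots\lten_A\Theta\lten_A\A\]
with $\ell$ copies of $\Theta$ alternating with $\ell+1$ copies of $\A$. By Theorem~\ref{thm_preproj_A}(b), $\BPi_d(A)$ is concentrated in degree zero, so $\Theta$ sits in non-positive cohomological degrees with $H^0(\Theta)\simeq B_1$, and each term $\Theta_i$ is projective as $A^{\rm e}$-module, hence flat on each side. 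Therefore the naive alternating tensor of these complexes realizes this derived tensor product. Taking $H^0$ and iteratively using right-exactness of $\A\otimes_A-$ and $-\otimes_A\A$ to replace each $\Theta$ by its cokernel presentation of $B_1$, I obtain
\[H^0(\underline{\Theta}^\ell)\simeq\A\otimes_AB_1\otimes_A\A\otimes_A\cdots\otimes_AB_1\otimes_A\A.\]
By Lemma~\ref{commutative1}(b) the multiplication map $B_1^{\otimes_A\ell}\to B_\ell$ is an isomorphism, and quotienting by $AeA$ at each of the $\ell+1$ positions kills exactly $\sum_{i=0}^\ell B_ieB_{\ell-i}=(BeB)_\ell$. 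This yields $H^0(\underline{\Theta}^\ell)\simeq B_\ell/(BeB)_\ell=\B_\ell$, and the commutativity of the square is automatic since both vertical maps are induced by the same quotient $A\to\A$.

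The main technical obstacle is the second step, namely justifying that $H^0$ of the alternating derived tensor product agrees with the classical alternating tensor product of the $B_1$'s. Since $\A$ is generally not flat over $A$, higher $\Tor$-contributions could in principle obstruct this collapse. The key ingredients for handling this are the concentration of $\Theta$'s cohomology in degree zero and the one-sided flatness of its terms: together these ensure that the only contribution to $H^0$ of the alternating tensor comes from replacing each $\Theta$ by $H^0(\Theta)$, a process controlled by right-exactness alone. I would make this rigorous by induction on $\ell$, using at each stage the triangle $AeA\to A\to\A\to AeA[1]$ to check that the potentially problematic $\Tor$-contributions live in cohomological degrees different from zero.
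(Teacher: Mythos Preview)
Your approach is correct and arrives at the same computation as the paper, but the paper's organization is cleaner and sidesteps precisely the technical worry you raise.

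The paper works directly with $\underline{\Theta}$ as a bounded complex of projective $\A^{\rm e}$-modules in non-positive degrees. This immediately gives $H^0(\underline{\Theta}^\ell)\simeq\underline{E}^\ell$ where $\underline{E}:=H^0(\underline{\Theta})$, with no flatness concerns: the tensor products are over $\A$, and each $\underline{\Theta}$ has $\A^{\rm e}$-projective terms. The paper then computes $\underline{E}\simeq\A\ten_A E\ten_A\A\simeq E/(AeE+EeA)\simeq\B_1$ using Lemma~\ref{lem3}, and for higher $\ell$ observes that since $M\ten_AN\simeq M\ten_{\A}N$ for $\A^{\rm e}$-modules, one has $\underline{E}^\ell\simeq E^\ell/\sum_iE^ieE^{\ell-i}\simeq(\Talg_AE/(e))_\ell\simeq\B_\ell$ via the graded algebra isomorphism $\Talg_AE\simeq B$ of Theorem~\ref{thm_preproj_A}. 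The commutativity is then checked (step (iii)) by unfolding $H^0(p_\ell)$ as $H^0(p_1)^{\ten_A\ell}$ and invoking Lemma~\ref{commutative1}(b).

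Your route through the alternating derived tensor $\A\lten_A\Theta\lten_A\A\lten_A\cdots$ is equivalent but forces you to confront the non-flatness of $\A$ over $A$. Your resolution via one-sided projectivity of the terms of $\Theta$ is sound: since each $\Theta_i$ is $A^{\rm e}$-projective, tensoring with $\A$ on one side preserves right-$A$-projectivity, so the next $-\lten_A\Theta$ is again an ordinary tensor, and inductively the whole alternating tensor is computed termwise; $H^0$ then collapses by right-exactness exactly as you say. This works, but it is extra bookkeeping that the paper avoids by staying over $\A$ throughout. What your approach buys is a more explicit tracking of the map $p_\ell$ itself, which makes the commutativity claim slightly more transparent; what the paper's approach buys is a shorter argument with no $\Tor$-issues to dispel.
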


\begin{proof}
Let $E:=H^0(\Theta)$, $\underline{E}:=H^0(\underline{\Theta})$ and for $\ell\geq 1$ 
$$E^\ell:=\underbrace{E\ten_A E\ten_A\ldots \ten_A E}_{\ell \textrm{ times}}\quad \textrm{and} \quad \underline{E}^\ell:=\underbrace{\underline{E}\ten_{\A} \underline{E}\ten_{\A}\ldots \ten_{\A} \underline{E}.}_{\ell \textrm{ times}}$$
Then we have isomorphisms $E^\ell\simeq H^0(\Theta^\ell)$ and $\underline{E}^\ell\simeq H^0(\underline{\Theta}^\ell)$.

(i) We show that $\beta_1:E\xrightarrow{\sim} B_1$ induces an isomorphism $\underline{E}\xrightarrow{\sim}\B_1$.

Taking $H^0$ of the isomorphism $\underline{\Theta}\simeq\A\ten_A\Theta\ten_A \A$ constructed in Lemma~\ref{lem3}, we obtain isomorphisms 
$$\underline{E}\simeq \underline{A}\ten_A E\ten_A \A\simeq \dfrac{E}{AeE+EeA}\simeq\dfrac{B_1}{AeB_1+B_1eA}\simeq\underline{B}_1\quad \textrm{in } \Mod(\A^{\rm e}).$$

(ii) We show that $\underline{E}\xrightarrow{\sim}\B_1$ in (i) induces an isomorphism $\underline{E}^\ell\xrightarrow{\sim}\B_\ell$ for any $\ell\ge1$.

Note that for $M$ and $N$ in $\Mod(\A^{\rm e})$ we have a canonical isomorphism $M\ten_A N\simeq M\ten_{\A}N$. Thus we have the following isomorphisms
$$\underline{E}^\ell\simeq \dfrac{E}{AeE+EeA}\ten_A\ldots\ten_A \dfrac{E}{AeE+EeA}\simeq \dfrac{E^\ell}{\sum_{i=0}^\ell E^ieE^{\ell-i}}\simeq \left( \dfrac{T_A E}{(e)}\right)_\ell.$$
Using the isomorphism of $\ZZ$-graded algebras $T_AE\simeq B$ in Theorem~\ref{thm_preproj_A}, we obtain $$\underline{E}^\ell\simeq \left( \dfrac{T_A E}{(e)}\right)_\ell\simeq \left(\dfrac{B}{(e)}\right)_{\ell}\simeq \B_\ell.$$

(iii) We show that the natural map
$$\xymatrix{{\rm nat.}:E^\ell\ar[rrr]^-{H^0(p_1)\ten_A\ldots \ten_A H^0(p_1)} &&&\underline{E}\ten_A\ldots \ldots \ten_A\underline{E}\simeq\underline{E}\ten_{\A}\ldots \ldots \ten_{\A}\underline{E} =\underline{E}^\ell}$$ 
makes the following diagram commutative:
$$\xymatrix{
H^0(\Theta^\ell)\ar[d]^{H^0(p_\ell)}\ar[rr]_-\sim&&E^\ell\ar[d]^{\rm nat.}\ar[rr]^-{\beta_1\ten_A\cdots\ten_A\beta_1}_-\sim&& B_1\ten_{A} \ldots \ten_{A} B_1\ar[rr]^-{\rm mult.} &&B_\ell\ar[d]^{\rm nat.}\\
H^0(\underline{\Theta}^\ell)\ar[rr]_-\sim&&\underline{E}^\ell\ar[rrrr]^-{\rm (ii)}_-\sim &&&& \underline{B}_\ell.}$$

The right pentagon is clearly commutative since both horizontal maps are induced by the isomorphism of $\ZZ$-graded algebras $T_AE\simeq B$.

We then show that the left square is commutative.
Since the square
$$\xymatrix{A\ten_A A\ar[rr]^{p_0\ten_A p_0}\ar[d]^{\wr} && \A\ten_A \A\ar[r]^\sim & \A\ten_{\A}\A\ar[d]^{\wr}\\ A\ar[rrr]^{p_0}&&& \A}$$
is clearly commutative, we have the assertion from the following isomorphisms:
$$\begin{array}{rcl}(H^0(p_1))^{\ten_A\ell}& \simeq &(H^0(p_0\ten_{A}1_{\Theta}\ten_A p_0))^{\ten_A\ell}\\& \simeq & H^0(p_0)\ten_A(1_{H^0(\Theta)}\ten_A H^0(p_0\ten_Ap_0))^{\ten_A\ell -1} \ten_A1_{H^0(\Theta)} \ten_AH^0(p_0)\\& \simeq & H^0(p_0)\ten_A(1_{H^0(\Theta)}\ten_A H^0(p_0))^{\ten_A\ell -1} \ten_A1_{H^0(\Theta)} \ten_AH^0(p_0)\\ 
&\simeq & H^0(p_\ell).\end{array}$$

(iv) Now the assertion follows from the commutative diagram in (iii) since the upper horizontal map is $\beta_\ell$ by Lemma~\ref{commutative1}.
\end{proof}

From Lemma \ref{commutativity B E}, we immediately get the following consequence.
\begin{cora}
We have an isomorphism $\Pi_d(\A)\simeq\B$ of $\ZZ$-graded algebras.
\end{cora}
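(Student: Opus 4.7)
The plan is to upgrade the family of vector space isomorphisms $H^0(\underline{\Theta}^\ell)\xrightarrow{\sim}\B_\ell$ supplied by Lemma~\ref{commutativity B E} into an isomorphism of graded algebras. By definition of the derived $d$-preprojective algebra, $\Pi_d(\A)_\ell = H^0(\underline{\Theta}^\ell)$, so taking the direct sum over $\ell\ge 0$ of the maps $\gamma_\ell$ coming from Lemma~\ref{commutativity B E} already yields an isomorphism $\gamma := \bigoplus_\ell \gamma_\ell : \Pi_d(\A)\to \B$ of graded $k$-vector spaces. What has to be checked is compatibility with multiplication.

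For this I would compare $\gamma$ to the graded algebra isomorphism $\bigoplus_\ell \beta_\ell : \Pi_d(A)\to B$ obtained in Theorem~\ref{thm_preproj_A}(c). Fixing $\ell, m\ge 0$, I would consider the rectangle whose top row is the multiplication $H^0(\Theta^\ell)\otimes_A H^0(\Theta^m)\to H^0(\Theta^{\ell+m})$ followed by $\beta_{\ell+m}:H^0(\Theta^{\ell+m})\xrightarrow{\sim} B_{\ell+m}$, and whose bottom row is the analogous composite built from $\underline{\Theta}$ and $\gamma_{\ell+m}$. The right-hand square (involving $H^0(p_{\ell+m})$ and the natural surjection $B_{\ell+m}\to \B_{\ell+m}$) commutes by Lemma~\ref{commutativity B E}. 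The left-hand square commutes because for any $\A^{\rm e}$-modules $M,N$ the canonical identification $M\otimes_A N\simeq M\otimes_{\A} N$ intertwines the tensor products of the projections $p_\ell\otimes p_m$ with $p_{\ell+m}$—this is precisely the observation invoked in step (ii) of Lemma~\ref{commutativity B E} to identify $\underline{E}^\ell$ with $E^\ell/\sum_i E^ieE^{\ell-i}$. Since the top composite is multiplicative by Theorem~\ref{thm_preproj_A}(c) and the vertical arrows are surjective algebra maps, the bottom composite is multiplicative as well, i.e.\ $\gamma_{\ell+m}(xy) = \gamma_\ell(x)\cdot\gamma_m(y)$ in $\B$.

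An equivalent—and perhaps cleaner—formulation of the argument is simply to observe that Lemma~\ref{commutativity B E}(ii) already constructed a chain of \emph{graded-algebra} isomorphisms
\[
\Pi_d(\A) \;=\; T_{\A}\,\underline{E} \;\simeq\; T_A E/(e) \;\simeq\; B/\langle e\rangle \;=\; \B,
\]
where the middle isomorphism uses Theorem~\ref{thm_preproj_A}(c) and the identification $\underline{E}\simeq\A\otimes_A E\otimes_A \A$ from Lemma~\ref{lem3}. The only point requiring care is that the ideal $(e)\subset T_A E$ is two-sided, so the quotient tensor algebra inherits a well-defined multiplication, but this is automatic.

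I do not anticipate a serious obstacle; the corollary is essentially a bookkeeping consequence of the work already done in Theorem~\ref{thm_preproj_A} and Lemma~\ref{commutativity B E}, and the only step requiring a moment's thought is the verification that the natural quotient $\Pi_d(A)\twoheadrightarrow \Pi_d(\A)$ (induced by $p_\bullet$) is an algebra homomorphism—which it is, by the standard compatibility of base change $A\to\A$ with tensor algebras.
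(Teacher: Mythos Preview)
Your proposal is correct and, in its second formulation, is exactly the paper's approach: the paper offers no separate proof and simply notes that the corollary is immediate from Lemma~\ref{commutativity B E}, whose step~(ii) already exhibits the chain of graded-algebra isomorphisms $\Pi_d(\A)=T_{\A}\underline{E}\simeq T_AE/(e)\simeq B/\langle e\rangle=\B$. One small caution: you use the symbol $\gamma_\ell$ for the isomorphisms $H^0(\underline{\Theta}^\ell)\xrightarrow{\sim}\B_\ell$, but in this paper $\gamma_\ell$ already denotes $\alpha_\ell\lten_B 1_{Be}$ (see subsection~\ref{subsec2}), so choose a different letter to avoid a clash.
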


By hypothesis $(A3)$, the algebra $\B$ is finite dimensional. Therefore we get the following consequence of Theorem~\ref{cluster category}.

\begin{cora}\label{cor existence cluster tilting}
Let $\Cc_{d-1}(\A)$ be the generalized $(d-1)$-cluster category associated to $\A$. Then the following hold.
\begin{itemize}
\item[(a)] $\Cc_{d-1}(\A)$ is a $(d-1)$-Calabi-Yau triangulated category.
\item[(b)] The object $\pi(\A)$ is a $(d-1)$-cluster tilting object in $\Cc_{d-1}(\A)$.
\item[(c)] The category $\add\{\underline{\Theta}^\ell \mid \ell\in \ZZ\}\subset \Db(\A)$ is a $(d-1)$-cluster tilting subcategory of $\Db(\A)$.
\end{itemize}
\end{cora}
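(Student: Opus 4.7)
The plan is to obtain this corollary as a direct application of Theorem~\ref{cluster category} with $\Lambda := \A$ and $n := d-1$. For this I must verify the two hypotheses of that theorem on $\A$: first, $\gldim \A \leq d-1$, and second, that the $d$-preprojective algebra $\Pi_d(\A)$ is finite dimensional.

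The first hypothesis is already recorded in the paragraph following the statement of Theorem~\ref{main diagram}: combining Proposition~\ref{global dimension of A} (applied to $B$) with assumption (A4), which gives $A$ the block-triangular shape $A \simeq \begin{bmatrix} eAe & 0 \\ (1-e)Ae & \A \end{bmatrix}$ and hence exhibits $\A$ as a corner ring of $A$, one obtains $\gldim \A \leq \gldim A \leq d-1$. The second hypothesis follows from the corollary proved immediately before our statement, which gives an isomorphism $\Pi_d(\A) \simeq \B$ of $\ZZ$-graded algebras, together with assumption (A3) guaranteeing that $\B$ is finite dimensional.

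With the two hypotheses in hand, parts (a) and (b) of the corollary are simply restatements of Theorem~\ref{cluster category}(b), which provides both the $(d-1)$-Calabi-Yau property of $\Cc_{d-1}(\A)$ and the $(d-1)$-cluster tilting property of $\pi(\A)$ (with endomorphism algebra $\Pi_d(\A) \simeq \B$, as a bonus consistency check). For part (c), Theorem~\ref{cluster category}(a) yields that $\add\{\SSS_{d-1}^i \A \mid i \in \ZZ\}$ is a $(d-1)$-cluster tilting subcategory of $\Db(\A)$, and what remains is to match the indexing. Recalling from the discussion following the definition of $\Cc_n$ that $\SSS_{d-1}^{-1} \simeq -\lten_\A \underline{\Theta}$ as autoequivalences of $\Db(\A)$, iteration gives $\SSS_{d-1}^{-i} \A \simeq \underline{\Theta}^i$ in $\Db(\A)$ for every $i \in \ZZ$, so the two subcategories coincide up to isomorphism. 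I do not expect any substantive obstacle here: the corollary essentially repackages the structural results of Theorem~\ref{cluster category} in the notation of this section, the only mildly nontrivial bookkeeping being the identification of $\SSS_{d-1}$-orbits with tensor powers of $\underline{\Theta}$.
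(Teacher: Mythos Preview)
Your proposal is correct and follows essentially the same approach as the paper: the corollary is deduced from Theorem~\ref{cluster category} once one knows $\gldim\A\le d-1$ and that $\Pi_d(\A)\simeq\B$ is finite dimensional by (A3). Your added remarks on the global dimension bound and on the identification $\SSS_{d-1}^{-i}\A\simeq\underline{\Theta}^i$ for part~(c) are exactly the routine unpackings the paper leaves implicit.
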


\subsection{Compatibility of gradings}\label{subsec2}

Using the isomorphism $\A\ten_A \Theta\ten_A \A\simeq \underline{\Theta}$ in Lemma~\ref{lem3}, we prove the following.

\begin{lema}\label{lemM}
For any $M\in \mathcal{D}^{\rm b}(\A)$, the cone of the map 
$$\xymatrix{M\ten_A\Theta\lten_A Be\ar[rrr]^-{1_{M\ten_A\Theta}\ten_A p_0\lten_A 1_{Be}} &&& M\ten_A \Theta\ten_A \A\lten_A Be\simeq  M\ten_{\A}\underline{\Theta}\lten_A Be}$$ is perfect as an object in $\Dd(\Gr C)$.
\end{lema}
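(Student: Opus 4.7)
My plan proceeds in three steps.

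First, I identify the cone. By hypothesis (A4), $\langle e\rangle = AeA = Ae$, so the natural surjection $p_0 \colon A \twoheadrightarrow \A$ fits in a short exact sequence $0 \to Ae \to A \to \A \to 0$ in $\Mod A^{\rm e}$ whose cone in $\Dd(A^{\rm e})$ is $Ae[1]$. Applying the triangulated functor $(M \ten_A \Theta \ten_A -)\lten_A Be$ identifies the cone of the map in the statement with $(M \ten_A \Theta \ten_A Ae) \lten_A Be \,[1]$ in $\Dd(\Gr C)$; it therefore suffices to prove this object is perfect.

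Second, I invoke Proposition~\ref{proptriangle}. Tensoring its triangle $\Theta \ten_A B(-1) \to B \to A$ with $-\lten_B Be$ — an underived operation since $Be$ is projective as a left $B$-module — and using the identification $A \ten_B Be = Ae$ (the degree-zero part of $Be$), yields a triangle
\[
(\ast)\quad \Theta \ten_A Be(-1) \to Be \to Ae \to \Theta \ten_A Be(-1)[1]
\]
in $\Dd(\Gr(A^{\rm op} \ten C))$, which resolves $Ae$ in terms of $Be$ and $\Theta\ten_A Be$.

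Third, I use a change-of-rings argument. Assumption (A4) gives $eA = eAe$, hence the decomposition $A = eAe \oplus (1-e)A$ of right $A$-modules; so $eAe$ is projective as a right $A$-module and $eAe \lten_A Be = eBe = C$ underived. Because the natural right $A$-action on $Ae$ factors through the algebra map $A \twoheadrightarrow eAe$, the functor $(-)\lten_A Be$ restricted to right $A$-modules factoring through $eAe$ coincides with $(-) \lten_{eAe} C$. Since each term of $\Theta$ is a summand of $A \ten A$, the complex $M \ten_A \Theta \ten_A Ae$ is a bounded complex of right $eAe$-modules, each a direct sum of copies of $Ae$. Combining this with the triangle $(\ast)$ — reconciled with the expression via the observation that the natural right $eAe$-action on $Ae$ and the right $C$-action on $Ae = A \lten_B Be$ both factor through and coincide on $C_0 = eAe$ — exhibits $(M \ten_A \Theta \ten_A Ae) \lten_A Be$ as a finite iterated extension of objects built from $C(i)$, and thus perfect in $\Dd(\Gr C)$.

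The hardest part will be the bookkeeping in the third step: one must rigorously justify the derived change-of-rings identification $(-)\lten_A Be \simeq (-)\lten_{eAe} C$ on the appropriate class of modules, carefully reconcile the two $(A,C)$-bimodule structures on $Ae$ (the natural one as an $A$-bimodule versus the one coming from $A \lten_B Be$), and verify that substituting $(\ast)$ terminates in finitely many steps — this termination is ensured by the boundedness of $\Theta$ (a projective resolution of length at most $d$) and of $M \in \Db(\A)$.
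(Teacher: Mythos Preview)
Your Step~1 is correct and matches the paper (which writes $AeA$ rather than $Ae$, but by (A4) these coincide). However, your Steps~2 and~3 take a substantial detour that does not close, while the paper's argument is a two-line application of thickness.

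The gap is in Step~3. You propose to combine the change-of-rings identification $(-)\lten_A Be \simeq (-)\lten_{eAe}C$ with the triangle $(\ast)$ to conclude perfection. But these two ingredients do not fit together. The triangle $(\ast)$ expresses $Ae$, viewed as the graded $C$-module $A\ten_B Be$ concentrated in degree~$0$, as an extension of $Be$ by $\Theta\ten_A Be(-1)$; neither $Be$ nor $\Theta\ten_A Be$ is perfect over $C$ (indeed $Be$ is the cluster tilting object in $\underline{\CM}(C)$, not a perfect complex), so $(\ast)$ cannot be iterated to reach $\per C$. Meanwhile, the object you actually need to analyse is $Ae\lten_A Be \simeq Ae\ten_{eAe}C$, which is \emph{not} concentrated in degree~$0$ and is not the $Ae$ appearing in $(\ast)$; your ``reconciliation'' of the two $C$-module structures on $Ae$ is therefore a comparison of genuinely different objects. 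The change-of-rings route alone could be salvaged, but only after proving that $Ae$ (or every finite-dimensional $eAe$-module) lies in $\per(eAe)$, i.e.\ that $\gldim(eAe)<\infty$; you neither state nor prove this.

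The paper bypasses all of this. Since $\gldim A<\infty$, one has $M\ten_A\Theta\in\per A$, hence $M\ten_A\Theta\ten_A AeA\in\thick_A(AeA)$. By (A4) one checks directly that $AeA=Ae$ lies in $\thick_A(eA)$ (any projective cover of a module $N$ with $N(1-e)=0$ is a sum of copies of $eA$, and the kernel again satisfies $K(1-e)=0$, so the finite projective resolution of $Ae$ consists entirely of sums of $eA$). Applying $-\lten_A Be$ then sends $\thick_A(eA)$ into $\thick_C(eBe)=\per C$. Proposition~\ref{proptriangle} is not needed.
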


\begin{proof} From the triangle $\xymatrix{AeA\ar[r] & A\ar[r]^{p_0} & \A\ar[r] & AeA[1]}$ in $\mathcal{D}(A^{\rm e})$ we deduce that the cone of $(1_{M\ten_A\Theta})\ten_Ap_0\lten_A 1_{Be}$ is $(M\ten_A\Theta\ten_A AeA)\lten_A Be$. Since $A$ has finite global dimension, the object $M\ten_A\Theta$ is in $\per A$. 
So the object $M\ten_A\Theta\ten_A AeA$ is in $\thick(AeA)$, which 
is contained in $\thick(eA)$ by hypothesis (A4). Thus $(M\ten_A\Theta\ten_A AeA)\lten_A Be\in \thick (eBe)=\per C$. 
\end{proof}

For $\ell\geq 1$ we consider the map 
\[\gamma_\ell:=\alpha_\ell\lten_{B}1_{Be}:\Theta^\ell\ten_ABe\to Be(\ell)\quad \textrm{ in }\Dd(\Gr(A^{\rm op}\ten C)).\]

\begin{lema}\label{lem1}
The morphism $1_{\A}\lten_A\gamma_1:\A\ten_A\Theta\ten_ABe\to\A\lten_A Be(1)$ is an isomorphism in $\Dd(\Gr (\A^{\rm op}\ten C))$.
\end{lema}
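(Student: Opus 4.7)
The natural strategy is to realize $1_{\A}\lten_A\gamma_1$ as one edge of a triangle whose third term vanishes. To this end, I would start from the triangle of Proposition~\ref{proptriangle}, shift it by $(1)$, and derived-tensor on the right with $Be$ over $B$. Because $Be$ is a direct summand of $B$ as a left $B$-module (hence projective), this derived tensor is an ordinary tensor, and by the very definition $\gamma_1=\alpha(1)\lten_B 1_{Be}$ from Definition~\ref{alpha and beta}, the resulting triangle in $\Dd(\Gr(A^{\rm op}\ten C))$ takes the form
$$\Theta\ten_A Be \xrightarrow{\gamma_1} Be(1) \longrightarrow A(1)\ten_B Be \longrightarrow \Theta\ten_A Be[1].$$

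The next step is to identify the third term. The right $B$-module structure on $A=B/B_{\geq 1}$ factors through the projection $B\to A$, so the ordinary tensor $A\ten_B Be$ is easily computed as $Ae$, placed in degree $0$; hence the third term is $Ae(1)$, seen as an $(A,C)$-bimodule.

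Finally I would apply $\A\lten_A-$ to the triangle. The crux is that $\A\lten_A Ae$ vanishes: indeed, $Ae$ is a direct summand of $A$ as a left $A$-module, so the derived tensor reduces to the ordinary one $\A\ten_A Ae$, which is the direct summand of $\A\ten_A A\simeq\A$ obtained by right multiplication by $e$. But by the definition $\A=A/\langle e\rangle$ the image of $e$ in $\A$ is zero, so $\A\cdot e=0$. The rotated triangle then exhibits $1_{\A}\lten_A\gamma_1$ as an isomorphism in $\Dd(\Gr(\A^{\rm op}\ten C))$, as desired. Apart from careful bookkeeping of the various bimodule structures and degree shifts, the argument is routine, and I do not foresee any serious obstacle.
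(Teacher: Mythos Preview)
Your proposal is correct and follows exactly the paper's approach: the paper's one-line proof computes the cone of $1_{\A}\lten_A\gamma_1$ as $\A\ten_A A(1)\ten_B Be=\A\ten_B Be(1)=\A e(1)=0$, which is precisely the computation you unpack by first tensoring the triangle of Proposition~\ref{proptriangle} with $Be$ over $B$ and then with $\A$ over $A$. Your version simply makes the provenance of the cone and each simplification explicit.
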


\begin{proof}
The cone of this morphism is $\A\ten_A A(1)\ten_B Be=\A\ten_B Be(1)=\A e(1)=0$, so we have the assertion.
\end{proof}

From Lemmas~\ref{lemM} and~\ref{lem1} we get the following fundamental consequences.

\begin{prop}\label{prop cone perfect}
The cone of the composition map
$$\xymatrix@C=1.5cm{\A\lten_A Be(1)\ar[r]^-{(1_{\A}\lten_A \gamma_1)^{-1}}& \A\ten_A \Theta\ten_A Be\ar[rr]^-{(1_{\A\ten_A\Theta})\ten_Ap_0\lten_A 1_{Be}} && \A\ten_A\Theta\ten_A\A\lten_ABe\simeq
\underline{\Theta}\lten_A Be}$$
in $\Dd(\Gr(\A^{\rm op}\ten C))$ is perfect as an object in $\Dd(\Gr C)$.
\end{prop}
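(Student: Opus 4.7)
The proof plan is essentially to recognize that the composition displayed in the statement is a composition of two maps already analyzed, and to invoke the previous two lemmas in sequence.

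First I would note that the first map $(1_{\A}\lten_A \gamma_1)^{-1}\colon \A\lten_A Be(1)\to \A\ten_A \Theta\ten_A Be$ is an isomorphism in $\Dd(\Gr(\A^{\rm op}\ten C))$; this is precisely the content of Lemma~\ref{lem1}, which computed its cone to be $\A\ten_B Be(1)=\A e(1)=0$ using the hypothesis (A4). Since this map is an isomorphism, its cone vanishes, and hence the cone of the composition in the statement is isomorphic (via the octahedral axiom, or trivially) to the cone of the second map
\[
1_{\A\ten_A\Theta}\ten_A p_0\lten_A 1_{Be}\colon \A\ten_A\Theta\ten_A Be \longrightarrow \A\ten_A\Theta\ten_A\A\lten_A Be\simeq \underline{\Theta}\lten_A Be.
\]

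Second, I would specialize Lemma~\ref{lemM} to the case $M=\A\in\Db(\A)$. That lemma tells us that the cone of exactly this second map is perfect as an object in $\Dd(\Gr C)$. Concretely, the argument there used the triangle $AeA\to A\to \A\to AeA[1]$ in $\Dd(A^{\rm e})$ together with $\gldim A\le d-1$ (Proposition~\ref{global dimension of A}) to conclude that the cone is of the form $(\A\ten_A\Theta\ten_A AeA)\lten_A Be$, which lies in $\thick(eA)\lten_A Be=\thick(eBe)=\per C$ because $AeA\in\thick_{A^{\rm e}}(eA)$ by hypothesis (A4).

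Combining these two observations finishes the proof: the cone of the composition equals the cone of the second map up to isomorphism in $\Dd(\Gr(\A^{\rm op}\ten C))$, and forgetting the $\A^{\rm op}$-action this object is perfect in $\Dd(\Gr C)$. There is no serious obstacle here; all the work was done in Lemmas~\ref{lemM} and~\ref{lem1}, and this proposition is the bookkeeping that packages them into the form needed for the subsequent application of Proposition~\ref{universal property} and the construction of the functor $G\colon \Cc_{d-1}(\A)\to \underline{\CM}(C)$.
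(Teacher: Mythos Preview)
Your proposal is correct and follows exactly the paper's approach: the paper introduces Proposition~\ref{prop cone perfect} with the sentence ``From Lemmas~\ref{lemM} and~\ref{lem1} we get the following fundamental consequences'' and gives no further proof, so your unpacking of those two lemmas (the first map is an isomorphism by Lemma~\ref{lem1}, the second has perfect cone by Lemma~\ref{lemM} with $M=\A$) is precisely the intended argument.
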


\begin{prop}\label{compatibility degree theta}
The functor $\xymatrix{F:\Db(\A)\ar[r]^-{\rm Res.} & \Db(A)\ar[rr]^{-\lten_ABe} && \Db(\gr C)\ar[r] & \underline{\CM}^\ZZ(C)}$ make the following diagrams commute up to isomorphism:
$$\xymatrix{\Db(\A)\ar[rr]^{F} \ar[d]_{-\ten_{\A}\underline{\Theta}} && \underline{\CM}^\ZZ(C)\ar[d]^{(1)}&&\Db(\A)\ar[rr]^{F} \ar[d]_{-\ten_{\A}\underline{\Theta}^{-1}} && \underline{\CM}^\ZZ(C)\ar[d]^{(-1)}\\ \Db(\A)\ar[rr]^{F} && \underline{\CM}^\ZZ(C) &&\Db(\A)\ar[rr]^{F} && \underline{\CM}^\ZZ(C).}$$
In particular, for any $\ell\in \ZZ$ we have $F(\underline{\Theta}^\ell)\simeq Be(\ell)$ in $\underline{\CM}^\ZZ(C)$.
\end{prop}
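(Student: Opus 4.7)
The plan is to apply the derived tensor functor $M\lten_{\A}-$ to the morphism of Proposition~\ref{prop cone perfect} for each $M\in\Db(\A)$, and to verify that the resulting triangle in $\Dd(\Gr C)$ induces the first commutative square after passage to $\underline{\CM}^{\ZZ}(C)$; the second square and the final assertion will then follow by formal manipulations.

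Concretely, given $M\in\Db(\A)$, applying $M\lten_{\A}-$ to the triangle of Proposition~\ref{prop cone perfect} produces a triangle
\[M\lten_{\A}\A\lten_A Be(1)\longrightarrow M\lten_{\A}\underline{\Theta}\lten_A Be\longrightarrow M\lten_{\A}P\]
in $\Dd(\Gr C)$, where $P$ denotes the cone from that proposition. Using the canonical isomorphism $M\lten_{\A}\A\simeq M$ in $\Dd(\A)$ together with the associativity of the derived tensor product, the source identifies with $F(M)(1)$ and the target with $F(M\lten_{\A}\underline{\Theta})$. The crucial point is to show that $M\lten_{\A}P$ is perfect as an object of $\Dd(\Gr C)$: by Proposition~\ref{global dimension of A}, $\gldim\A\leq d-1$, so $\Db(\A)=\per\A$ and $M$ is quasi-isomorphic to a bounded complex of finitely generated projective $\A$-modules; tensoring such a representative with $P$ over $\A$ yields a bounded complex whose terms are summands of finite direct sums of copies of $P$, and is therefore perfect over $C$. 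Since perfect complexes vanish in $\underline{\CM}^{\ZZ}(C)$, the morphism becomes an isomorphism
\[F(M)(1)\stackrel{\sim}{\longrightarrow}F(M\lten_{\A}\underline{\Theta})\]
functorial in $M$, which is the first commutative square.

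The second square follows formally: $-\lten_{\A}\underline{\Theta}$ is an autoequivalence of $\Db(\A)$ with quasi-inverse $-\lten_{\A}\underline{\Theta}^{-1}$ and $(1)$ is an autoequivalence of $\underline{\CM}^{\ZZ}(C)$ with quasi-inverse $(-1)$, so one composes both sides of the isomorphism with these inverses. For the final assertion, first establish the base case $F(\A)\simeq Be$ in $\underline{\CM}^{\ZZ}(C)$ by applying $-\lten_A Be$ to the triangle $AeA\to A\to\A$ in $\Dd(A^{\rm e})$: this gives a triangle $AeA\lten_A Be\to Be\to F(\A)$, and the left term lies in $\per C$ because $AeA\in\thick(eA)$ as a right $A$-module (by (A4), as in the proof of Lemma~\ref{lemM}) and $eA\lten_A Be=eBe=C$. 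Induction on $|\ell|$ using the two commutative diagrams then yields
\[F(\underline{\Theta}^\ell)\simeq F(\A)(\ell)\simeq Be(\ell).\]

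The main technical obstacle is precisely the perfectness of $M\lten_{\A}P$ over $C$ in the graded setting; it crucially relies on the finite global dimension of $\A$ to replace $M$ by a perfect $\A$-complex, after which the computation is routine. Once this is in place, the proposition reduces to manipulations of triangles and natural transformations.
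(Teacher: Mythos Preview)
Your proposal is correct and follows essentially the same route as the paper. The paper's proof is a single line: it observes that $(1)\circ F$ and $F\circ(-\ten_{\A}\underline{\Theta})$ are both given by $-\lten_{\A}X$ for bimodule complexes $X=\A\lten_A Be(1)$ and $X=\underline{\Theta}\lten_A Be$ respectively, and that Proposition~\ref{prop cone perfect} provides a map between these with perfect $C$-cone; you simply unpack this by applying $M\lten_{\A}-$ and spelling out why $M\lten_{\A}P$ is perfect (using $\Db(\A)=\per\A$). Your explicit verification of the base case $F(\A)\simeq Be$ is also correct and appears in the paper immediately after this proposition (in the remark that $q_0$ is an isomorphism, for the same reason you give).
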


\begin{proof}
Since Proposition \ref{prop cone perfect} implies
\[(1)\circ F=(-\otimes_{\A}(\A\lten_A Be(1)))\simeq(-\otimes_{\A}(\underline{\Theta}\lten_A Be))=F\circ(-\ten_{\A}\underline{\Theta}),\]
we have the left diagram. The right diagram is an immediate consequence.
\end{proof}

Combining Proposition \ref{prop cone perfect}  with the universal property of the generalized cluster category (Proposition \ref{universal property}), we get the following consequence.

\begin{prop}\label{existence of F}
There  exists a triangle functor $G:\Cc_{d-1}(\A)\to \underline{\CM}(C)$ such that we have a commutative diagram
$$\xymatrix{\Db(\A)\ar[rr]^{F} \ar[d]_{\pi}&& \underline{\CM}^\ZZ(C)\ar[d]^{\rm nat.}\\ \Cc_{d-1}(\A)\ar[rr]^{G} && \underline{\CM}(C).}$$
\end{prop}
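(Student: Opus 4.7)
The plan is to apply the universal property of the generalized cluster category (Proposition~\ref{universal property}) with $\Lambda = \A$, $n = d-1$, the Iwanaga-Gorenstein algebra $C = eBe$ (provided by Theorem~\ref{cluster tilting CM}(a)), and the bimodule $T := \A \lten_A Be$ viewed as an object of $\Db(\A^{\rm op}\otimes C)$ without grading. The required bound $\gldim \A \leq d-1$ follows from Proposition~\ref{global dimension of A} together with hypothesis (A4), so all prerequisites of Proposition~\ref{universal property} are in place.

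The reason this is the right $T$ to choose is that the functor $-\lten_{\A} T: \Db(\A) \to \Db(C)$ is canonically isomorphic to the composition $\Db(\A)\xrightarrow{\mathrm{Res.}}\Db(A)\xrightarrow{-\lten_A Be}\Db(C)$, since $-\lten_{\A}(\A\lten_A Be) \simeq -\lten_A Be$ as functors on $\Db(\A)$. Consequently, after postcomposing with the natural functor $\Db(C) \to \underline{\CM}(C)$, this functor agrees with the composition $\underline{\CM}^\ZZ(C) \to \underline{\CM}(C)$ applied to $F$. Hence the commutative diagram Proposition~\ref{universal property} produces will translate directly into the one we want, with $F$ appearing along the top.

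The remaining ingredient demanded by Proposition~\ref{universal property} is a morphism $T \to \underline{\Theta}\otimes_{\A} T$ in $\Db(\A^{\rm op}\otimes C)$ whose cone is perfect in $\Db(C)$. This is obtained by forgetting the $\ZZ$-grading in Proposition~\ref{prop cone perfect}: the graded morphism $\A\lten_A Be(1) \to \underline{\Theta}\lten_A Be$ has cone which is perfect in $\Dd(\Gr C)$, and forgetting the grading identifies $Be(1)$ with $Be$, so this descends to a morphism $T \to \underline{\Theta}\otimes_{\A} T$ whose cone remains perfect in $\Db(C)$ (since the forgetful functor $\Dd(\Gr C) \to \Dd(C)$ sends $\per^\ZZ C$ into $\per C$).

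With these data assembled, Proposition~\ref{universal property} immediately yields the desired triangle functor $G:\Cc_{d-1}(\A) \to \underline{\CM}(C)$ together with the commutative square. I do not expect a genuine obstacle here; the whole argument is a direct invocation of the universal property, and the only thing to check with care is the graded/ungraded bookkeeping — namely, that forgetting the grading really does intertwine $F$ with $-\lten_{\A} T$ (so that the two commutative squares glue correctly) and that perfectness in $\Dd(\Gr C)$ descends to perfectness in $\Db(C)$.
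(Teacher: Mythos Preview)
Your proposal is correct and follows essentially the same approach as the paper: define $T=\A\lten_A Be$, forget the grading in Proposition~\ref{prop cone perfect} to obtain the morphism $T\to\underline{\Theta}\ten_{\A}T$ with perfect cone, and invoke Proposition~\ref{universal property}. You are in fact more explicit than the paper about the graded/ungraded bookkeeping needed to identify the top row of the resulting square with $F$ followed by the natural functor $\underline{\CM}^\ZZ(C)\to\underline{\CM}(C)$.
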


\begin{proof}
Let $T:=\A\lten_A Be$. Then Proposition \ref{prop cone perfect} gives a map $T\to\underline{\Theta}\lten_{\A}T$ in $\Dd(\A^{\rm op}\ten C)$ whose cone is perfect as an object in $\Dd(C)$.
Thus the assertion follows from Proposition \ref{universal property}.
\end{proof}

For any $\ell\geq 0$, we consider the map
$$q_\ell:=p_\ell\lten_A 1_{Be}:\Theta^\ell\ten_A Be\to \underline{\Theta}^\ell\lten_A Be\quad \textrm{ in } \underline{\CM}^\ZZ(C).$$
This is an isomorphism for $\ell=0$ since we have $AeA\in\thick(eA)$ and $eA\lten_ABe=C$.

The following isomorphism in $\underline{\CM}^{\ZZ}(C)$ plays an important role.

\begin{prop}\label{first step of induction}
The morphism in Proposition~\ref{prop cone perfect} gives an isomorphism
\[\xymatrix{\delta:F(\underline{\Theta})=\underline{\Theta}\lten_ABe\ar[r]^-\sim& \A\lten_ABe(1)=F(\A)(1)}\quad \textrm{ in }\underline{\CM}^{\ZZ}(C)\]
such that the following diagram commutes:
\begin{equation*}
\xymatrix{  
\Theta\ten_A Be\ar[rr]^{q_1}\ar[d]^{\gamma_1} && \underline{\Theta}\lten_ABe\ar[d]^-\delta\\
A\ten_A Be(1)\ar[rr]^{q_0(1)} && \A\lten_ABe(1)}
\end{equation*}
\end{prop}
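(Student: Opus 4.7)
The plan is to define $\delta$ as the inverse in $\underline{\CM}^{\ZZ}(C)$ of the composition from Proposition~\ref{prop cone perfect}, and then verify commutativity of the square by combining a naturality argument with the definition of $p_1$.

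For existence, set $\Psi := 1_{\A \ten_A \Theta} \ten_A p_0 \lten_A 1_{Be}$ and
$$\phi := \Psi \circ (1_{\A} \lten_A \gamma_1)^{-1}: \A \lten_A Be(1) \longrightarrow \underline{\Theta} \lten_A Be.$$
By Proposition~\ref{prop cone perfect}, the cone of $\phi$ is perfect as an object in $\Dd(\Gr C)$. Using the graded analog of Proposition~\ref{CM over Iwanaga}(c), namely $\underline{\CM}^{\ZZ}(C) \simeq \Db(\gr C)/\gr\per C$, any morphism whose cone is perfect becomes invertible under the Verdier localization. Hence $\phi$ induces an isomorphism in $\underline{\CM}^{\ZZ}(C)$, and we define $\delta := \phi^{-1}$.

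For the commutativity, it suffices to check $q_1 = \phi \circ q_0(1) \circ \gamma_1$. Since $\gamma_1$ is a morphism in $\Dd(\Gr(A^{\rm op} \ten C))$, tensoring on the left with the $A$-bimodule morphism $p_0: A \to \A$ yields a naturality square
$$\xymatrix@C=3em{\Theta \ten_A Be \ar[r]^-{\gamma_1} \ar[d]_-{p_0 \ten 1} & Be(1) \ar[d]^-{q_0(1)} \\ \A \ten_A \Theta \ten_A Be \ar[r]^-{1_{\A} \lten_A \gamma_1} & \A \lten_A Be(1).}$$
Combining this with the definition of $\phi$ gives $\phi \circ q_0(1) \circ \gamma_1 = \Psi \circ (p_0 \ten 1_{\Theta \ten_A Be})$. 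By the definition of $p_1$ just before Lemma~\ref{commutativity B E}, we have $p_1 = p_0 \ten 1_{\Theta} \ten p_0$ under the canonical identification $\Theta \simeq A \ten_A \Theta \ten_A A$; since $p_0$ acts independently on the left and right factors, these two actions commute, and $q_1 = p_1 \lten_A 1_{Be}$ factors exactly as $\Psi \circ (p_0 \ten 1_{\Theta \ten_A Be})$, yielding the desired equality.

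The only delicate point is bookkeeping the identifications $\underline{\Theta} \simeq \A \ten_A \Theta \ten_A \A$ coming from Lemma~\ref{lem3}, and distinguishing the derived and underived tensor products. Because $\Theta$ is $A^{\rm e}$-projective, $\Theta \lten_A Be = \Theta \ten_A Be$ and the compositions above live genuinely in the homotopy category, so no homotopy subtleties arise and all the diagrams commute on the nose.
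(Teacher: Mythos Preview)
Your proof is correct and follows essentially the same approach as the paper. The paper packages the argument into a single commutative diagram with intermediate nodes $\A\ten_A\Theta\ten_A Be$ and $\A\ten_A\Theta\ten_A\A\lten_A Be$, checking that the upper square commutes by the definition of $q_1$, the right triangle by the definition of $\delta$, and the left square because both routes equal $p_0\lten_A\gamma_1$; your naturality square and your factorization $q_1 = \Psi\circ(p_0\ten 1_{\Theta\ten_A Be})$ are precisely these same pieces written out as composites rather than drawn.
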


\begin{proof}
Consider the following diagram: 
\[\xymatrix{
\Theta\ten_ABe\ar^-{p_0\ten_A(1_{\Theta\ten_ABe})}[rd]\ar^{\gamma_1}[dd]\ar[rrrrr]^{q_1}&&&&&
\underline{\Theta}\lten_ABe\ar^{\delta}[dd]\\
&\A\ten_A\Theta\ten_ABe\ar^-{(1_{\A\ten_A\Theta})\ten_Ap_0\lten_A 1_{Be}}[rrr]\ar^(.7){1_{\A}\lten_A\gamma_1}[drrrr]&&&\A\ten_A\Theta\ten_A\A\lten_ABe\ar[ru]_-\sim&\\
A\ten_ABe(1)\ar^{q_0(1)=p_0\lten_A1_{Be(1)}}[rrrrr]&&&&&\A\lten_ABe(1)
}\]
The upper square is commutative by definition of $q_1$, and the right square is commutative by definition of $\delta$.
The left square is commutative since both compositions are $p_0\lten\gamma_1$. Thus the assertion follows.
\end{proof}

For any $\ell\ge 1$, let $\delta_\ell:\underline{\Theta}^\ell\lten_A Be\to\A\lten_ABe(\ell)$ be an isomorphism in $\underline{\CM}^{\ZZ}(C)$ defined as the composition 
$$\xymatrix{\delta_\ell:\underline{\Theta}^\ell\lten_A Be\ar[rr]^-{1_{\underline{\Theta}^{\ell-1}}\lten_A \delta} &&
\underline{\Theta}^{\ell-1}\lten_A Be(1)\ar[rr]^-{1_{\underline{\Theta}^{\ell-2}}\lten_A \delta(1)} && \cdots\ar[r] & \underline{\Theta}\lten_A Be(\ell-1)\ar[r]^-{\delta(\ell-1)}  & Be(\ell).}$$
Then $\delta_\ell$ gives the isomorphism $F(\underline{\Theta}^\ell)=\underline{\Theta}^\ell\lten_A Be\to Be(\ell)$ in $\underline{\CM}^\ZZ(C)$ given in Proposition~\ref{compatibility degree theta}.

\subsection{$F$ and $G$ are triangle equivalences}\label{subsec3} 

The following result is the key step for proving that the triangle functors $F$ and $G$ are triangle equivalences. 

\begin{prop}\label{big commutative diagram}
The map $$F_{\underline{\Theta}^m,\underline{\Theta}^\ell}:\Hom_{\Dd(\A)}(\underline{\Theta}^m, \underline{\Theta}^\ell)\to \Hom_{\underline{\CM}^\ZZ(C)}(\underline{\Theta}^m\lten_A Be,\underline{\Theta}^\ell\lten_A Be)$$ is an isomorphism for any $m,\ell\in \ZZ$. 
\end{prop}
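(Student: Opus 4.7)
The strategy is to reduce to the case $m=0$ via the twist-compatibility of $F$, and then to settle the cases $n\geq 0$ and $n<0$ separately. Iterating the two squares in Proposition~\ref{compatibility degree theta} gives, for every $m\in\ZZ$, a natural isomorphism $F\circ(-\ten_{\A}\underline{\Theta}^{-m})\simeq (-m)\circ F$ of functors $\Db(\A)\to\underline{\CM}^{\ZZ}(C)$. Since $-\ten_{\A}\underline{\Theta}^{-m}$ is an auto-equivalence of $\Db(\A)$ with quasi-inverse $-\ten_{\A}\underline{\Theta}^{m}$, and $(-m)$ is an auto-equivalence of $\underline{\CM}^{\ZZ}(C)$, this yields a commutative square
$$\xymatrix{\Hom_{\Db(\A)}(\underline{\Theta}^m,\underline{\Theta}^\ell)\ar[d]^{\wr}\ar[rr]^-{F_{\underline{\Theta}^m,\underline{\Theta}^\ell}} && \Hom_{\underline{\CM}^{\ZZ}(C)}(Be(m),Be(\ell))\ar[d]^{\wr}\\ \Hom_{\Db(\A)}(\A,\underline{\Theta}^{\ell-m})\ar[rr]^-{F_{\A,\underline{\Theta}^{\ell-m}}} && \Hom_{\underline{\CM}^{\ZZ}(C)}(Be,Be(\ell-m))}$$
with invertible vertical arrows, reducing the problem to showing that $F_{\A,\underline{\Theta}^n}$ is an isomorphism for every $n\in\ZZ$.

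For $n<0$ I would show that both sides vanish. The right-hand side is $\underline{B}_n=0$ by Proposition~\ref{cluster tilting graded CM}(b), since $B$ (hence $\underline{B}$) is positively graded. For the left-hand side, the identification $\underline{\Theta}^{-1}\simeq D\A[1-d]$ in $\Dd(\A^{\mathrm{e}})$ together with associativity of the derived tensor product gives $\underline{\Theta}^n\simeq (D\A)^{\lten_{\A}(-n)}[n(1-d)]$. Since any iterated derived tensor power $(D\A)^{\lten_{\A}k}$ of the module $D\A$ has cohomology concentrated in non-positive degrees, the shifted complex $\underline{\Theta}^n$ has cohomology supported in degrees $\leq n(1-d)$, which is strictly negative for $n<0$ and $d\geq 2$. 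Hence $\Hom_{\Db(\A)}(\A,\underline{\Theta}^n)=H^0(\underline{\Theta}^n)=0$.

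For $n\geq 0$ I would prove the claim via the commutative square
$$\xymatrix{\Hom_{\Db(\A)}(\A,\underline{\Theta}^n)\ar[rr]^-{F_{\A,\underline{\Theta}^n}}\ar[d]^{\wr} && \Hom_{\underline{\CM}^{\ZZ}(C)}(Be,Be(n))\ar[d]^{\wr}\\ \underline{B}_n\ar@{=}[rr] && \underline{B}_n}$$
in which the left vertical is the composition of $\Hom_{\Db(\A)}(\A,\underline{\Theta}^n)=H^0(\underline{\Theta}^n)$ with the isomorphism from Lemma~\ref{commutativity B E}, and the right vertical comes from Proposition~\ref{cluster tilting graded CM}(b). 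Since the two verticals and the bottom are isomorphisms, so is the top. The main obstacle is the commutativity of this square: an element $\bar b\in\underline{B}_n$ with preimage $f:\A\to\underline{\Theta}^n$ is sent by $F$ to $f\lten_{A}1_{Be}:\A\lten_{A}Be\to\underline{\Theta}^n\lten_{A}Be$, which must then be composed with the natural isomorphism $\A\lten_{A}Be\simeq Be$ (coming from the triangle $AeA\to A\to\A\to AeA[1]$ and the fact that $AeA\lten_{A}Be\in\per C$) and with $\delta_n:\underline{\Theta}^n\lten_{A}Be\xrightarrow{\sim}Be(n)$ from Proposition~\ref{compatibility degree theta}, and the resulting morphism in $\Hom_{\underline{\CM}^{\ZZ}(C)}(Be,Be(n))$ must equal left-multiplication by a lift of $\bar b$ under the identification of Proposition~\ref{cluster tilting graded CM}(b). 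I would verify this by induction on $n$, using the inductive definitions of $\beta_n$ and $\delta_n$ as iterated compositions of $\beta_1$ and $\delta$, and reducing to the base case $n=1$ which follows from Proposition~\ref{first step of induction} combined with the graded compatibility of Lemma~\ref{commutative1}(a) pushed down modulo the ideal generated by $e$.
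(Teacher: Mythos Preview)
Your approach is essentially the same as the paper's: reduce to $m=0$ via the twist-compatibility of Proposition~\ref{compatibility degree theta}, treat $\ell<0$ by showing both sides vanish, and for $\ell\ge0$ establish the commutativity of a square linking $F_{\A,\underline{\Theta}^\ell}$ to the identification $H^0(\underline{\Theta}^\ell)\simeq\B_\ell$ of Lemma~\ref{commutativity B E} on one side and Proposition~\ref{cluster tilting graded CM}(b) on the other.

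Two remarks. First, in your $n<0$ argument there is a sign slip: shifting a complex with cohomology in degrees $\le0$ by $[n(1-d)]$ lands the cohomology in degrees $\le -n(1-d)=n(d-1)$, not $\le n(1-d)$; it is $n(d-1)$ that is strictly negative for $n<0$ and $d\ge2$. The paper phrases the same vanishing more directly via $\gldim\A\le d-1$. Second, for $n\ge0$ the paper does not leave the commutativity check as a single induction but decomposes it into three lemmas: Lemma~\ref{1st commutativity} (compatibility of $\beta_\ell$ with $\gamma_\ell$ through $B_\ell\to\B_\ell$), Lemma~\ref{2nd commutativity} (compatibility of $H^0(p_\ell)$ with the conjugation $q_\ell\cdot(-)\cdot q_0^{-1}$), and Lemma~\ref{3rd commutativity} (the inductive compatibility of $\gamma_\ell,q_\ell,\delta_\ell$, whose base case is Proposition~\ref{first step of induction}). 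Your sketch collapses these into ``induction on $n$ reducing to Proposition~\ref{first step of induction} and Lemma~\ref{commutative1}(a)'', which captures the third lemma but leaves implicit the passage from $\Theta^\ell$ to $\underline{\Theta}^\ell$ (the role of $p_\ell$ and $q_\ell$) handled by the first two. The missing ingredient is not deep, but without making the $q_0$-conjugation explicit one cannot cleanly match $F_{\A,\underline{\Theta}^\ell}$ (which lands in $\Hom(\A\lten_ABe,\underline{\Theta}^\ell\lten_ABe)$) against the identification of Proposition~\ref{cluster tilting graded CM}(b) (which concerns $\Hom(Be,Be(\ell))$).
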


In order to prove this we need the following intermediate lemmas.

\begin{lema}\label{1st commutativity}
The isomorphism $\B_\ell\simeq\Hom_{\underline{\CM}^{\ZZ}(C)}(Be,Be(\ell))$ of Proposition~\ref{cluster tilting graded CM}(b) makes the following diagram commutative:
\[\xymatrix{
H^0(\Theta^\ell)\ar^-{\sim}[r]\ar[d]_\wr^{\beta_\ell}&\Hom_{\Dd(A)}(A,\Theta^\ell)\ar^{-\lten_ABe}[d]\\ B_\ell\ar^{{\rm nat.}}[d]
&\Hom_{\underline{\CM}^{\ZZ}(C)}(Be,\Theta^\ell\ten_ABe)\ar^{\gamma_\ell\cdot}[d]\\
\B_\ell\ar^-{\sim}[r]&\Hom_{\underline{\CM}^{\ZZ}(C)}(Be,Be(\ell))
}\]
\end{lema}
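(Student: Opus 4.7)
The strategy is to deduce the lemma from the already established Lemma~\ref{commutative1}(a) by tensoring with $Be$ over $B$. The point is that Lemma~\ref{commutative1}(a) expresses the compatibility of $\beta_\ell$ and $\alpha_\ell$ in the graded $B$-module setting, and our desired diagram is the image of that diagram under the functor $-\lten_B Be:\Dd(\Gr B)\to\Dd(\Gr C)$, followed by the natural projection to $\underline{\CM}^\ZZ(C)$.

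First, I would write down, for a class $x\in H^0(\Theta^\ell)$, the images obtained along the two paths in the diagram. The upper-right composition sends $x$, viewed as $x\colon A\to\Theta^\ell$ in $\Dd(A)$, to the morphism
\[
\gamma_\ell\circ (x\lten_A Be)\colon Be\longrightarrow Be(\ell)\quad \text{in }\underline{\CM}^\ZZ(C),
\]
using $\Theta^\ell\lten_A Be\simeq\Theta^\ell\ten_A Be$ (which is valid since $Be$ is flat enough on the relevant terms, or simply because we replace $\Theta^\ell$ by a projective resolution). On the other hand, the lower-left composition sends $x$ to the image of $\beta_\ell(x)\in B_\ell$ in $\B_\ell$, and then through the isomorphism of Proposition~\ref{cluster tilting graded CM}(b).

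Second, I would invoke Lemma~\ref{commutative1}(a), which asserts that $\beta_\ell(x)\in B_\ell$ corresponds under $B_\ell\simeq\Hom_{\Dd(\Gr B)}(B,B(\ell))$ to the morphism $\alpha_\ell\circ(x\lten_A B)\colon B\to B(\ell)$. Applying the triangle functor $-\lten_B Be\colon \Dd(\Gr B)\to\Dd(\Gr C)$ and using the identities $B\lten_B Be=Be$ and $\alpha_\ell\lten_B Be=\gamma_\ell$, this becomes precisely $\gamma_\ell\circ(x\lten_A Be)$, which matches the image along the upper-right path.

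Third, I would verify the bottom square, i.e. the compatibility of the identifications
\[
B_\ell\simeq\Hom_{\Dd(\Gr B)}(B,B(\ell))\quad\text{and}\quad B_\ell\simeq\Hom_{\Gr C}(Be,Be(\ell))
\]
with the functor $-\lten_B Be$. Both isomorphisms send $b\in B_\ell$ to ``left multiplication by $b$'', so applying $-\lten_B Be$ to the former yields the latter, giving the desired commutativity. Finally, to pass from $\Hom_{\Gr C}(Be,Be(\ell))$ to $\Hom_{\underline{\CM}^\ZZ(C)}(Be,Be(\ell))$, one uses that $Be,Be(\ell)\in\CM^\ZZ(C)$ so the Hom in the derived category coincides with the Hom in $\gr C$, and the natural quotient modulo morphisms factoring through $\add C$ corresponds, by Proposition~\ref{cluster tilting graded CM}(b), to the natural projection $B_\ell\to\B_\ell$.

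The main obstacle is bookkeeping across the three settings $\Dd(\Gr B)$, $\Dd(\Gr C)$ and $\underline{\CM}^\ZZ(C)$; however, since $B$ is projective over $B$ and $Be$ is Cohen-Macaulay over $C$, the derived tensor products coincide with the ordinary ones and all relevant Hom-spaces reduce to module-level computations, so once the three commutative squares are assembled the claim is immediate.
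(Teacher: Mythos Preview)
Your proposal is correct and follows essentially the same approach as the paper: you invoke Lemma~\ref{commutative1}(a), transport it to the $C$-side via the functor $-\lten_B Be$ using $\gamma_\ell=\alpha_\ell\lten_B 1_{Be}$, and then pass to the stable category using Proposition~\ref{cluster tilting graded CM}(b). The paper organizes this as a single enlarged diagram whose three cells are precisely your three steps, so the arguments coincide.
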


\begin{proof}
The above diagram is a part of the following:
\[\xymatrix{H^0(\Theta^\ell)\ar[r]^-\sim\ar[dd]^{\beta_\ell} & \Hom_{\Dd(A)}(A,\Theta^\ell)\ar[d]^{-\lten_A B} & \\ 
& \Hom_{\Dd(\Gr B)}(B,\Theta^\ell\ten_A B)\ar[r]^-{-\ten_BBe}\ar[d]^{\alpha_\ell\cdot} & \Hom_{\Dd(\Gr C)}(Be,\Theta^\ell\ten_A Be)\ar[d]^{\gamma_\ell\cdot} \\
B_\ell\ar[r]^-\sim \ar[d]^{\rm nat.}& \Hom_{\Dd (\Gr B)}(B,B(\ell))\ar[r]^-{-\ten_BBe} & \Hom_{\Dd(\Gr C)}(Be, Be(\ell))\ar[d]\\
\B_\ell\ar[rr]^-\sim && \Hom_{\underline{\CM}^\ZZ(C)}(Be,Be(\ell))}\]
The upper left pentagon is commutative by Lemma~\ref{commutative1}. The upper right square is commutative since by definition $\gamma_\ell=\alpha_\ell\lten_{B}1_{Be}$. The lower pentagon is commutative since the isomorphism of $\ZZ$-graded algebras
$\B\simeq \bigoplus_{\ell\in \ZZ}\Hom_{\underline{\CM}^\ZZ(C)}(Be,Be(\ell))$ is induced by the isomorphism of $\ZZ$-graded algebras $B\simeq \bigoplus_{\ell\in \ZZ}\Hom_{\Gr B}(B,B(\ell))$ (Proposition~\ref{cluster tilting graded CM}(b)).
Hence the original diagram is commutative.
\end{proof}

\begin{lema}\label{2nd commutativity}
For any $\ell\geq 0$ the following diagram commutes.
\[\xymatrix@C=3em{
H^0(\Theta^\ell)\ar_{\wr}[d]\ar^{H^0(p_\ell)}[r]&H^0(\underline{\Theta}^\ell)\ar_{\wr}[d]  \\
\Hom_{\Dd(A)}(A,\Theta^\ell)\ar^-{-\lten_ABe}[d]&\Hom_{\Dd(\A)}(\A,\underline{\Theta}^\ell) \ar^-{-\lten_ABe=F_{\A,\underline{\Theta}^\ell}}[d]\\
\Hom_{\underline{\CM}^{\ZZ}(C)}(Be,\Theta^\ell\ten_ABe)\ar^-{q_\ell\ \cdot\ q_0^{-1}}[r]&\Hom_{\underline{\CM}^{\ZZ}(C)}(\A\lten_ABe,\underline{\Theta}^\ell\lten_ABe)
}\]

\end{lema}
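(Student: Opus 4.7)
The plan is to chase an element $x \in H^0(\Theta^\ell)$ around both paths of the square and use naturality of the functor $-\lten_A Be$. Under the vertical identification of the top row, $x$ corresponds to a morphism $\tilde{g}_x : A \to \Theta^\ell$ in $\Dd(A)$, namely the unique one sending $1_A$ to $x$. Similarly, for any $y \in H^0(\underline{\Theta}^\ell)$ the second-row vertical isomorphism produces $\tilde{f}_y : \A \to \underline{\Theta}^\ell$ in $\Dd(\A)$ sending $\bar{1}$ to $y$. These identifications are induced by the $H^0$ functor, so in particular they are natural with respect to morphisms in $\Dd(A)$.

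The key intermediate step is to show that, for $y := H^0(p_\ell)(x)$, the square
\[
\xymatrix{A \ar[r]^{\tilde{g}_x} \ar[d]_{p_0} & \Theta^\ell \ar[d]^{p_\ell} \\ \A \ar[r]^{\tilde{f}_y} & \underline{\Theta}^\ell}
\]
commutes in $\Dd(A)$, where $\tilde{f}_y$ is regarded in $\Dd(A)$ by restriction of scalars along $A \to \A$. This is immediate by tracing $1_A$: both compositions map it to $p_\ell(x)$ in $H^0(\underline{\Theta}^\ell)$ (using that $p_\ell$ is $A$-bilinear and $\tilde{f}_y(\bar{1}) = y = H^0(p_\ell)(x)$). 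Thus the square is the image under the canonical map $H^0(\Theta^\ell) \to H^0(\underline{\Theta}^\ell)$, reinterpreted via the Hom-isomorphisms.

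Next I would apply the triangle functor $-\lten_A Be : \Dd(A) \to \Dd(\gr C)$ to the above square. Since $A\lten_ABe = Be$ and, by the very definitions, $p_\ell \lten_A 1_{Be} = q_\ell$ and $p_0 \lten_A 1_{Be} = q_0$, this yields
\[
\xymatrix{Be \ar[r]^-{\tilde{g}_x \ten_A Be} \ar[d]_-{q_0} & \Theta^\ell \ten_A Be \ar[d]^-{q_\ell} \\ \A \lten_A Be \ar[r]^-{\tilde{f}_y \lten_A Be} & \underline{\Theta}^\ell \lten_A Be}
\]
in $\Dd(\gr C)$, hence also in $\underline{\CM}^{\ZZ}(C)$. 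Since $q_0$ is an isomorphism in $\underline{\CM}^{\ZZ}(C)$ (as observed just before Proposition~\ref{first step of induction}), we may invert it and obtain
\[
F_{\A,\underline{\Theta}^\ell}(\tilde{f}_y) = \tilde{f}_y \lten_A Be = q_\ell \circ (\tilde{g}_x \ten_A Be) \circ q_0^{-1},
\]
which is precisely the commutativity of the stated diagram.

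No step looks genuinely obstructive; the only care needed is keeping track of whether a given morphism lives in $\Dd(A)$ or $\Dd(\A)$ and using that the $H^0$-identification with $\Hom_{\Dd}(\mathbf{1}, -)$ is natural with respect to the restriction functor $\Dd(\A) \to \Dd(A)$. Once this bookkeeping is fixed, the whole argument is a single application of $-\lten_A Be$ to the naturality square produced by $p_\ell$.
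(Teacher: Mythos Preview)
Your proof is correct and follows essentially the same approach as the paper's: both factor the original square through $\Hom_{\Dd(A)}(A,\underline{\Theta}^\ell)$ using $p_\ell\cdot$ and $\cdot\, p_0$, and then pass to $\underline{\CM}^{\ZZ}(C)$ via $-\lten_ABe$ together with the identities $q_\ell=p_\ell\lten_A1_{Be}$ and $q_0=p_0\lten_A1_{Be}$. The only difference is presentational---the paper expands to a $3\times3$ grid of commuting sub-squares, while you trace a single element; the underlying naturality argument is identical.
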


\begin{proof}

The above diagram is a part of the following, where ${}_C(-,-)$ is $\Hom_{\underline{\CM}^{\ZZ}(C)}(-,-)$:
\[\xymatrix@C=3em{
H^0(\Theta^\ell)\ar_{\wr}[d]\ar^{H^0(p_\ell)}[r]&H^0(\underline{\Theta}^\ell)\ar_{\wr}[d]\ar[r]^\sim&\Hom_{\Dd(\A)}(\A,\underline{\Theta}^\ell)\ar[d]^{{\rm nat.}}\\ 
\Hom_{\Dd(A)}(A,\Theta^\ell)\ar^-{-\lten_ABe}[d]\ar[r]^{p_\ell\cdot}& \Hom_{\Dd(A)}(A,\underline{\Theta}^\ell)\ar^-{-\lten_ABe}[d]& \Hom_{\Dd(A)}(\A,\underline{\Theta}^\ell)\ar^-{-\lten_ABe}[d]\ar[l]_{\cdot p_0}\\
{}_C(Be,\Theta^\ell\ten_ABe)\ar^-{q_\ell\cdot}[r]&{}_C(\A\lten_ABe,\underline{\Theta}^\ell\lten_ABe)&{}_C(\A\lten_ABe,\underline{\Theta}^\ell\lten_ABe)\ar[l]_{\cdot q_0}^\sim
}\]
The upper squares are clearly commutative. The lower squares are also commutative  since by definition $q_\ell=p_\ell\lten_A 1_{Be}$.
\end{proof}

\begin{lema}\label{3rd commutativity}
We have the following commutative diagram in $\underline{\CM}^{\ZZ}(C)$:
\[\xymatrix@C=5em{
\Theta^\ell\ten_ABe\ar^{q_\ell}[r]\ar^{\gamma_\ell}[d]&\underline{\Theta}^\ell\lten_ABe\ar^{\delta_\ell}[d]\\
Be(\ell)\ar^{q_0(\ell)}[r]&\A\lten_ABe(\ell)
}\]
\end{lema}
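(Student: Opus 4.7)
The plan is to prove Lemma~\ref{3rd commutativity} by induction on $\ell \ge 1$, with the base case $\ell = 1$ being precisely Proposition~\ref{first step of induction}.

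For the inductive step, the three maps forming the square all admit natural recursive decompositions coming directly from their defining iterated compositions. From the definitions of $\alpha_\ell$ and $\delta_\ell$, we obtain
\[\gamma_\ell = \gamma_{\ell-1}(1) \circ (1_{\Theta^{\ell-1}} \ten_A \gamma_1) \quad \textrm{and} \quad \delta_\ell = \delta_{\ell-1}(1) \circ (1_{\underline{\Theta}^{\ell-1}} \lten_{\A} \delta).\]
For the third map, the multiplicative identity $p_\ell = p_{\ell-1} \ten_A p_1$ (valid under the canonical identification $p_0 \ten_A p_0 = p_0$ at each junction $A \ten_A A \simeq A$ in the definition of $p_\ell$) yields
\[q_\ell = (p_{\ell-1} \ten_A 1_{\underline{\Theta} \lten_A Be}) \circ (1_{\Theta^{\ell-1}} \ten_A q_1).\]

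Substituting these decompositions splits the target square into two commutative pieces. The first piece is Proposition~\ref{first step of induction} tensored on the left with $1_{\Theta^{\ell-1}}$, which commutes automatically. The second piece is the inductive hypothesis in degree $\ell-1$ shifted by $(1)$, combined with the map $p_{\ell-1}$; it commutes by the inductive hypothesis together with the interchange law $(p_{\ell-1} \ten 1) \circ (1 \ten g) = (1 \lten g) \circ (p_{\ell-1} \ten 1)$ applied across the relevant $A$-tensor.

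The main technical subtlety lies in carefully reconciling the tensor products $\ten_A$ and $\lten_{\A}$. Because $p_{\ell-1}$ lands in the $\A$-bimodule category while the outer tensor products are a priori over $A$, one uses the canonical identifications $\underline{\Theta}^{\ell-1} \ten_A \underline{\Theta} \simeq \underline{\Theta}^{\ell-1} \lten_{\A} \underline{\Theta}$ and $\underline{\Theta}^{\ell-1} \lten_{\A} \A \lten_A Be(1) \simeq \underline{\Theta}^{\ell-1} \lten_A Be(1)$ arising from $\A \lten_A \underline{\Theta} \simeq \underline{\Theta}$ and $\A \lten_A Be(1) \simeq Be(1)$ in $\underline{\CM}^{\ZZ}(C)$. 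Once this bookkeeping is settled, the proof reduces to mechanical diagram chasing.
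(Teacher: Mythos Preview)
Your proof is correct and follows essentially the same inductive strategy as the paper: induction on $\ell$ with base case Proposition~\ref{first step of induction}, decomposing the square via the recursive structure of $\gamma_\ell$, $\delta_\ell$, and $q_\ell$. The only cosmetic difference is that you peel off a single $\Theta$ on the right (writing $\Theta^\ell=\Theta^{\ell-1}\ten_A\Theta$ and applying the base case first, then the inductive hypothesis shifted by $(1)$), whereas the paper peels off on the left (writing $\Theta^\ell=\Theta\ten_A\Theta^{\ell-1}$, applying $1_\Theta\ten_A(\text{inductive hypothesis})$ first and the base case $\gamma_1(\ell-1)$, $\delta_1(\ell-1)$ last); the two arguments are mirror images of each other.
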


\begin{proof}
For the case $\ell=1$, the assertion is shown in Proposition~\ref{first step of induction}.
Assume that the assertion is true for $\ell-1$.
Consider the following commutative diagram:
\[\xymatrix@C=5em{
\Theta\ten_A\Theta^{\ell-1}\lten_ABe\ar^-{1_{\Theta}\ten_A\gamma_{\ell-1}}[d]\ar^-{1_{\Theta}\ten_Aq_{\ell-1}}[r]&
\Theta\ten_A\underline{\Theta}^{\ell-1}\lten_ABe\ar^{1_{\Theta}\ten_A\delta_{\ell-1}}[d]\ar[r]^-{p_0\ten_A(1_{\Theta\ten_A\underline{\Theta}^{\ell-1}\lten_ABe})}&
\underline{\Theta}\ten_{\A}\underline{\Theta}^{\ell-1}\lten_ABe\ar^{1_{\underline{\Theta}}\ten_{\A}\delta_{\ell-1}}[d]\\
\Theta\ten_ABe(\ell-1)\ar^-{1_{\Theta}\ten_Aq_0(\ell-1)}[r]\ar^{\gamma_1(\ell-1)}[d]&\Theta\ten_A\A\lten_ABe(\ell-1)\ar[r]^-{p_0\ten_A1_{\A\lten_ABe(\ell-1)}}&\underline{\Theta}\ten_{\A}\A\lten_ABe(\ell-1)\ar^{\delta_1(\ell-1)}[d]\\
Be(\ell)\ar^-{q_0(\ell)}[rr]&&\A\lten_ABe(\ell)
}\]
Clearly the upper right square is commutative.
The upper left square is commutative by our induction assumption, and the lower pentagon is commutative for the case $\ell=1$.
Thus the commutativity for the case $\ell$ follows from the biggest square.
\end{proof}

\begin{proof}[Proof of Proposition~\ref{big commutative diagram}]
We only have to show the statement for the case $m=0$.
For $\ell<0$, 
we have $\Hom_{\Dd(\A)}(\A,\underline{\Theta}^\ell)=0$ by $\gldim\A\le d-1$, and 
$\Hom_{\underline{\CM}^\ZZ(C)}(F(\A),F(\underline{\Theta}^\ell))\simeq 
\Hom_{\underline{\CM}^\ZZ(C)}(Be,Be(\ell))=\B_\ell=0$ by Proposition~\ref{cluster tilting graded CM}(b).
Hence $F_{\A,\underline{\Theta}^\ell}$ is an isomorphism in this case.

For $\ell\geq 0$ consider the following diagram:
\[\xymatrix@C=4em{
B_\ell\ar@{<-}^{{\rm \beta_\ell}}[r]_{\sim}\ar^{{\rm nat.}}[ddd]& H^0(\Theta^\ell)\ar[r]^{H^0(p_\ell)}\ar[d]_\wr&H^0(\underline{\Theta}^\ell)\ar_{\wr}[d]\\
&\Hom_{\Dd(A)}(A,\Theta^\ell)\ar^{-\lten_ABe}[d]&\Hom_{\Dd(\A)}(\A,\underline{\Theta}^\ell)\ar^{-\lten_ABe=F_{\A,\underline{\Theta}^\ell}}[d]\\
&\Hom_{\underline{\CM}^{\ZZ}(C)}(Be,\Theta^\ell\ten_ABe)\ar^-{q_\ell\ \cdot\ q_0^{-1}}[r]\ar^{\gamma_\ell\cdot}[d]&\Hom_{\underline{\CM}^{\ZZ}(C)}(\A\lten_ABe,\underline{\Theta}^\ell\lten_ABe)\ar^{\delta_\ell\cdot}[d]_{\wr}\\
\B_\ell\ar_-{\sim}[r]&\Hom_{\underline{\CM}^{\ZZ}(C)}(Be,Be(\ell))\ar_-{\sim}^-{q_0(\ell)\ \cdot\ q_0^{-1}}[r]&\Hom_{\underline{\CM}^{\ZZ}(C)}(\A\lten_ABe,\A\lten_ABe(\ell))
}\]
By Lemma~\ref{1st commutativity} the left hexagon is commutative, by Lemma~\ref{2nd commutativity} the upper right hexagon is commutative, and by Lemma~\ref{3rd commutativity} the lower square is commutative. Hence the whole diagram commutes.

Moreover by Lemma~\ref{commutativity B E} the map $\beta_\ell:H^0(\Theta^\ell)\simeq B_\ell$ induces  an isomorphism $ H^0(\underline{\Theta}^\ell)\simeq \B_\ell$.
Therefore the following diagram is commutative:
\[\xymatrix@C=4em{
H^0(\Theta^\ell)\ar[r]^{H^0(p_\ell)}\ar[dd]^{\beta_\ell}_\wr&
H^0(\underline{\Theta}^\ell)\ar[r]^{\sim}\ar[dd]_\wr
&\Hom_{\Dd(\A)}(\A,\underline{\Theta}^\ell)\ar[d]^{F_{\A,\underline{\Theta}^\ell}}\\
&&\Hom_{\underline{\CM}^\ZZ(C)}(F(\A),F(\underline{\Theta}^\ell))\ar[d]^{q_0(\ell)^{-1}\delta_\ell\ \cdot\ q_0}_\wr\\
B_\ell\ar[r]^{{\rm nat.}}&\B_\ell\ar[r]_(.4)\sim&\Hom_{\underline{\CM}^{\ZZ}(C)}(Be,Be(\ell))}
\]
 Thus $F_{\A,\underline{\Theta}^\ell}$ is an isomorphism.
\end{proof}

\begin{proof}[Proof of Theorem~\ref{main diagram}]

By Proposition~\ref{compatibility degree theta}, the functor $F$ restricted to the subcategory $\add\{\underline{\Theta}^\ell \mid \ell \in \ZZ\}\subset \Db(\A)$ induces a dense functor:
$$\add\{\underline{\Theta}^\ell \mid \ell \in \ZZ\}\to\add\{Be(\ell) \mid \ell\in \ZZ\}\subset\underline{\CM}^\ZZ(C).$$
This is an equivalence by Proposition~\ref{big commutative diagram}.
These subcategories are $(d-1)$-cluster tilting subcategories by Corollary~\ref{cor existence cluster tilting}(c) and Proposition~\ref{cluster tilting graded CM}(c). 
Thus $F$ is a triangle equivalence by Proposition~\ref{criterion for equivalence}. 

Since we have a commutative diagram
\[\xymatrix{
\Db(\A)/(-\ten_{\A}\underline{\Theta})\ar[r]^F\ar[d]^\pi&
(\underline{\CM}^{\ZZ}(C))/(1)\ar[d]^{{\rm nat.}}\\
\Cc_{d-1}(\A)\ar[r]^G&\underline{\CM}(C).
}\]
whose vertical functors are fully faithful and $F_{\A,\underline{\Theta}^\ell}$ is an isomorphism for any $\ell\in \ZZ$, we have that the map $G_{\pi\A,\pi\A}$ is an isomorphism.
Since $\pi\A\in\Cc_{d-1}(\A)$ and $G(\pi\A)=Be$ are $(d-1)$-cluster tilting objects by Corollary~\ref{cor existence cluster tilting}(b) and Theorem~\ref{cluster tilting CM}(d), we deduce that $G$ is a triangle equivalence again by Proposition~\ref{criterion for equivalence}.
\end{proof}

\section{Application to quotient singularities}\label{Application to quotient singularities}

In this section we apply the main theorem in the previous section
to invariant rings.

\subsection{Setup and main result}

Let $S$ be the polynomial ring $k[x_1,\ldots,x_d]$ over an algebraically closed field
$k$ of characteristic zero, and $G$ be a finite subgroup of $\SL_d(k)$ acting freely on $k^d\backslash\{0\}$.
The group $G$ acts on $S$ in a natural way. We
denote by $R:=S^G$ the invariant ring and by $S*G$ the skew group
algebra. Then $R$ is a Gorenstein isolated singularity of Krull
dimension $d$. We assume that $G$ is a cyclic group
generated by $g={\rm diag}(\zeta^{a_1},\ldots,\zeta^{a_d})$ with
a primitive $n$-th root $\zeta$ of unity and integers $a_j$ satisfying

\smallskip
\hspace{.5cm}(B1) $0<a_j<n$ and $(n,a_j)=1$ for any $j$ with $1\le j\le d$.
\smallskip

\hspace{.5cm}(B2) $a_1+\cdots+a_d=n$.
\smallskip

We regard $S=k[x_1,\cdots,x_d]$
as a $\frac{\ZZ}{n}$-graded ring $\bigoplus_{\ell\in\ZZ}S_{\frac{\ell}{n}}$
by putting $\deg x_j=\frac{a_j}{n}$. Since $G$ acts on $S$ by
$g\cdot x_i=\zeta^{a_i}x_i$, the invariant subring is given by
\[S^G=\bigoplus_{\ell\in\ZZ}S_{\ell}.\]
Now we define graded $S^G$-modules for each $i$ with $0\le i<n$ by
\[T^i:=\bigoplus_{\ell\in\ZZ}S_{\ell+\frac{i}{n}},\]
where the degree $\ell$ part of $T^i$ is
$S_{\ell+\frac{i}{n}}$. Then we have $T^0=S^G$. Let
\[T:=\bigoplus_{i=0}^{n-1}T^i\ \mbox{ and }\ T':=\bigoplus_{i=1}^{n-1}T^i.\]
Note that we have $T\simeq S$ as (ungraded) $S^G$-modules.
Define $k$-algebras by
\begin{eqnarray*}
A:=\End_{\Gr (S^G)}(T),&&\A:=\End_{\underline{\CM}^\ZZ(S^G)}(T)\\
B:=\End_{S^G}(T),&&\B:=\End_{\underline{\CM}(S^G)}(T).
\end{eqnarray*}
Then $B$ and $\B$ are graded algebras such that $A=B_0$ and $\A=\B_0$. 
We will give explicit presentations of $B$, $A$ and $\A$ in terms of quivers with relations in Proposition \ref{presentation}.

Let $e$ be the idempotent of $B=\End_{S^G}(T)$ associated with the direct summand $T^0$ of $T$. Then we have $eBe\simeq S^G$, $\A\simeq A/\langle e\rangle$ and $\B\simeq B/\langle e\rangle$.

Our main result in this section is the following.

\begin{thma}\label{main result for singularities}
Under the assumptions and notations above, we have the following.
\begin{itemize}
\item[(a)] The functor $\xymatrix{F:\Db(\A)\ar[r]^-{\rm Res.}&\Db(A)\ar[rr]^-{-\lten_A Be} && \Db(\gr S^G)\ar[r]& \underline{\CM}^{\ZZ}(S^G)}$ is a triangle equivalence.
Moreover $T\simeq Be$ is a tilting object in $\underline{\CM}^{\ZZ}(S^G)$.
\item[(b)] There exists a triangle equivalence $G:\Cc_{d-1}(\A)\to\underline{\CM}(S^G)$ 
making the diagram
$$\xymatrix{\Db(\A)\ar[d]^{\pi}\ar[rr]_\sim^{F}&& \underline{\CM}^{\ZZ}(S^G)\ar[d]^{\rm nat.} \\
\Cc_{d-1}(\A)\ar[rr]_\sim^{G} && \underline{\CM}(S^G)}$$
commutative, where $\Cc_{d-1}(\A)$ is the generalized $(d-1)$-cluster category of $\A$.
\end{itemize}
\end{thma}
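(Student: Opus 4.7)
My plan is to derive Theorem~\ref{main result for singularities} directly from Theorem~\ref{main diagram} after identifying the data $(B,e)$ of Section~4 with the rings of Section~5 and checking that hypotheses (A1*), (A2), (A3), (A4) are satisfied in this setting.

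The first step is identification. Since $G$ acts freely on $k^d\setminus\{0\}$, Auslander's classical theorem gives $\End_{S^G}(S)\simeq S*G$, and since $T\simeq S$ as ungraded $S^G$-modules, this induces an isomorphism of graded algebras between $B=\End_{S^G}(T)$ and $S*G$ endowed with the natural positive grading induced from that of $S$ (where $\deg x_j = a_j$ after rescaling by $n$ to an integer grading). Under this isomorphism, the decomposition $T=\bigoplus T^i$ corresponds to the decomposition of the unit $1\in S*G$ into primitive idempotents associated with the characters of the cyclic group $G$, and the idempotent $e$ singles out the trivial character, whence $eBe\simeq S^G$, which is the ring $C$ of Theorem~\ref{main diagram}. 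The degree-zero part $A=B_0$ and its quotient $\underline A=A/\langle e\rangle$ then acquire the explicit quiver-with-relations descriptions of Proposition~\ref{presentation} (which I would invoke but not re-derive).

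Next, I verify the easy axioms. For (A2): noetherianity of $B$ is automatic since $S$ is noetherian and $G$ is finite, so $S*G$ is a finite $S$-algebra. For (A4): a degree-zero morphism $T^i\to T^0$ in $\gr S^G$ is multiplication by an element of $S$ of (fractional) degree $-i/n$, which must vanish since $S$ has non-negative degrees; this uses only the positivity built into (B1). For (A3): the finite dimensionality of $\underline B=\End_{\underline{\CM}(S^G)}(T)$ is a consequence of the isolated singularity hypothesis, since each $T^i$ is locally free on the punctured spectrum of $S^G$, so the stable endomorphism ring is supported only at the maximal ideal and is finite-dimensional over $k$.

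The main obstacle is (A1*), the bimodule $d$-Calabi-Yau property with Gorenstein parameter~$1$. Here the plan is to exhibit a bimodule resolution of $S*G$ by twisting the Koszul resolution of $S$ by the $G$-action: since $G\subset\SL_d(k)$, it acts trivially on $\Lambda^d V$ where $V=\bigoplus kx_j$, so the Koszul complex is $G$-equivariant and yields a finite projective resolution of $S*G$ as a bimodule over itself. The key point is then the self-duality: the top term of the Koszul resolution of $S$ is $S\otimes\Lambda^d V\otimes S$, sitting in total degree $a_1+\cdots+a_d=n$, which becomes Gorenstein parameter $1$ after the rescaling that makes the grading integer-valued. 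Together with the trivial $G$-action on $\Lambda^d V$, this yields the self-dual isomorphism $P_\bullet\simeq P_\bullet^{\vee}[d](-1)$ required by the definition of bimodule $d$-Calabi-Yau of Gorenstein parameter~$1$. The role of condition (B2) is precisely to pin this parameter to exactly $1$.

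With all four axioms verified, Theorem~\ref{main diagram} applies, and its two conclusions are exactly the statements (a) and (b) of Theorem~\ref{main result for singularities}, once we note that the functor $F$ of Theorem~\ref{main diagram}(a) is, by definition, the composition displayed in the statement. The fact that $T\simeq Be$ as graded Cohen-Macaulay modules follows from the identification of Step~1. I anticipate that the bookkeeping in the verification of (A1*)---tracking gradings and the compatibility between the $G$-action and the self-duality of the Koszul complex---will be the step requiring the most care, while the other axioms are essentially formal consequences of the setup.
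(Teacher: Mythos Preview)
Your proposal is correct and follows essentially the same approach as the paper: identify $B\simeq S*G$ and $C=eBe\simeq S^G$, verify (A1*)--(A4), and invoke Theorem~\ref{main diagram}. The only differences are presentational: the paper phrases the bimodule resolution for (A1*) in terms of the McKay quiver (their Theorem~\ref{S*G is CY}, citing \cite{BSW10} for the ungraded self-duality and then checking the grading via a $\{0,1\}$-valued degree function on wedges, which is where (B2) enters), whereas you phrase it as the $G$-equivariant Koszul complex $B\otimes\Lambda^\bullet V\otimes B$; and for (A4) the paper observes that vertex~$0$ is a source in the quiver of $A$, while your direct degree argument is equivalent and slightly cleaner.
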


As a consequence, we recover the following results.

\begin{cora}\label{invariant basic}
In the setup above, the following assertions hold.
\begin{itemize}
\item[(a)] \cite[III.1]{Aus78} The stable category $\underline{\CM}(S^G)$ of
maximal Cohen-Macaulay $R$-modules is a $(d-1)$-Calabi-Yau triangulated category.
\item[(b)] \cite[Thm 2.5]{Iya07a} The $S^G$-module $S$ is a
$(d-1)$-cluster tilting object in $\underline{\CM}(S^G)$. 
\end{itemize}
\end{cora}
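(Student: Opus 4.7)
The plan is to deduce both (a) and (b) as direct consequences of Theorem~\ref{main result for singularities}(b), transporting structure along the triangle equivalence $G:\Cc_{d-1}(\A)\to\underline{\CM}(S^G)$ and appealing to Theorem~\ref{cluster category}.

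First I would check that Theorem~\ref{cluster category} is applicable to $\A$, i.e.\ that $\A$ is finite dimensional of global dimension at most $d-1$ and that its $d$-preprojective algebra is finite dimensional. Finite dimensionality of $\A$ is built into hypothesis (A3). The global dimension bound follows from Proposition~\ref{global dimension of A} (yielding $\gldim A\le d-1$) together with (A4), which realizes $\A$ as the quotient $A/\langle e\rangle$ appearing as a corner of the block-triangular decomposition of $A$. Finite dimensionality of $\Pi_d(\A)$ follows from the isomorphism of graded algebras $\Pi_d(\A)\simeq\B$ established as a corollary of Lemma~\ref{commutativity B E}, since $\B$ is finite dimensional by (A3). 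Hence Theorem~\ref{cluster category} applies and gives that $\Cc_{d-1}(\A)$ is a $(d-1)$-Calabi-Yau triangulated category in which $\pi(\A)$ is a $(d-1)$-cluster tilting object.

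For assertion (a): since $G:\Cc_{d-1}(\A)\to\underline{\CM}(S^G)$ is a triangle equivalence, the Serre duality on $\Cc_{d-1}(\A)$ transports to $\underline{\CM}(S^G)$. Concretely, for $X,Y\in\underline{\CM}(S^G)$ with preimages $X',Y'\in\Cc_{d-1}(\A)$ under $G$, the functorial isomorphisms
\[
\Hom_{\underline{\CM}(S^G)}(X,Y)\simeq\Hom_{\Cc_{d-1}(\A)}(X',Y')\simeq D\Hom_{\Cc_{d-1}(\A)}(Y',X'[d-1])\simeq D\Hom_{\underline{\CM}(S^G)}(Y,X[d-1])
\]
exhibit $\underline{\CM}(S^G)$ as $(d-1)$-Calabi-Yau.

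For assertion (b): the image $G(\pi(\A))$ is a $(d-1)$-cluster tilting object in $\underline{\CM}(S^G)$ because triangle equivalences preserve cluster tilting subcategories (the defining vanishing and functorial finiteness conditions from Definition 1.2 are preserved). The commutative square in Theorem~\ref{main result for singularities}(b) identifies $G(\pi(\A))$ with the image of $F(\A)$ under the natural functor $\underline{\CM}^{\ZZ}(S^G)\to\underline{\CM}(S^G)$, and by construction $F(\A)=\A\lten_ABe\simeq Be$; moreover $Be=\Hom_{S^G}(T^0,T)=T=\bigoplus_{i=0}^{n-1}T^i\simeq S$ as ungraded $S^G$-modules by the setup of this section. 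Therefore $S$ is a $(d-1)$-cluster tilting object in $\underline{\CM}(S^G)$, completing the proof.

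There is no substantive obstacle here: the content of the corollary is already encoded in Theorem~\ref{main result for singularities} together with Theorem~\ref{cluster category}, and the argument is essentially a transport of structure. The only book-keeping point worth highlighting is the identification $G(\pi(\A))\simeq S$, which requires unwinding the definition of $T$ and observing that $T\simeq S$ as ungraded modules.
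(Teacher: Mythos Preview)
Your proof is correct and follows essentially the same route the paper implicitly takes: the corollary is stated immediately after Theorem~\ref{main result for singularities} with the phrase ``As a consequence, we recover the following results,'' and no separate argument is given. Your transport-of-structure argument via the triangle equivalence $G$ and Theorem~\ref{cluster category} is exactly the intended justification; note that you could shorten your verification of the hypotheses of Theorem~\ref{cluster category} by citing Corollary~\ref{cor existence cluster tilting} directly, and the identification $Be\simeq T$ is already recorded in the statement of Theorem~\ref{main result for singularities}(a).
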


As a special case of Theorem~\ref{main result for singularities} we have the following.

\begin{cora}
Let $G\subset\SL_3(k)$ be a finite cyclic subgroup satisfying (B1).
Then the stable category $\underline{\CM}(S^G)$ of
maximal Cohen-Macaulay modules is triangle equivalent to the
generalized 2-cluster category $\Cc_2(\underline{A})$ for a finite
dimensional algebra $\underline{A}$ of global dimension at most $2$.
\end{cora}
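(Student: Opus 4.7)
The plan is to deduce this corollary directly from Theorem~\ref{main result for singularities}(b) applied with $d=3$. The main task is to check that the hypotheses of that theorem are indeed satisfied, and that the resulting algebra $\underline{A}$ has global dimension at most~$2$.

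First I would verify condition (B2). Since $g=\textrm{diag}(\zeta^{a_1},\zeta^{a_2},\zeta^{a_3})$ lies in $\SL_3(k)$, its determinant $\zeta^{a_1+a_2+a_3}$ equals $1$, so $n\mid a_1+a_2+a_3$. By (B1), $0<a_1+a_2+a_3<3n$, hence $a_1+a_2+a_3\in\{n,2n\}$. In the second case, replace the generator $g$ by $g^{-1}$, which generates the same cyclic subgroup and has exponents $n-a_j$; these still satisfy (B1), and their sum is $3n-2n=n$, so (B2) holds. Next I would note that (B1) already forces $G$ to act freely on $k^3\setminus\{0\}$: for $1\le k<n$ and any $j$, the eigenvalue $\zeta^{ka_j}$ of $g^k$ equals $1$ iff $n\mid ka_j$, which by $\gcd(a_j,n)=1$ requires $n\mid k$, contradicting $k<n$.

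With these verifications, the setup of Section~\ref{Application to quotient singularities} applies, and in particular the algebras $B$, $A$, $\underline{A}$ are defined. Theorem~\ref{main result for singularities}(b) then yields a triangle equivalence
\[
G:\Cc_{d-1}(\underline{A})\xrightarrow{\sim}\underline{\CM}(S^G)
\]
which for $d=3$ is exactly $\Cc_2(\underline{A})\simeq\underline{\CM}(S^G)$. The algebra $\underline{A}=\B_0$ is finite dimensional since by construction $\B=\End_{\underline{\CM}(S^G)}(T)/[S^G]$ is finite dimensional (as noted in the setup, this is the hypothesis (A3) for our $B$).

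Finally I would check that $\gldim \underline{A}\le 2$. This follows formally from the general Section~3 machinery: the graded algebra $B$ attached to our invariant ring is bimodule $d$-Calabi-Yau of Gorenstein parameter~$1$ with $A=B_0$ finite dimensional, hence by Theorem~\ref{thm_preproj_A}(a) we have $\gldim A\le d-1=2$, and by (A4) (which in this setting is the isomorphism $A\simeq\begin{bmatrix}S^G&0\\(1-e)Ae&\underline{A}\end{bmatrix}$) we get $\gldim\underline{A}\le\gldim A\le 2$. No step of the argument is truly difficult once the framework is in place; the only mild subtlety is reducing to (B2) by possibly inverting the generator, which requires the observation that $\SL_3$ forces $a_1+a_2+a_3\equiv 0\pmod n$.
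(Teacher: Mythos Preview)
Your proof is correct and follows essentially the same approach as the paper: the core step is checking (B2) by observing that $a_1+a_2+a_3\in\{n,2n\}$ and, in the second case, passing to $g^{-1}$. The paper's proof stops there, since the bound $\gldim\underline{A}\le d-1$ and finite-dimensionality of $\underline{A}$ are already established in the general setup of Section~4 (right after condition (A4)) and need not be re-argued in this corollary; your additional verifications are correct but redundant. One small slip: in the triangular matrix description of $A$, the top-left corner is $eAe$ (the degree-zero part of $S^G$), not $S^G$ itself.
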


\begin{proof} 
We only have to check the condition (B2).
Let $g={\rm diag}(\zeta^{a_1}, \zeta^{a_2}, \zeta^{a_3})$ be a generator of $G$.
Since $0<a_i<n$ and $g\in\SL_3(k)$, we have $a_1+a_2+a_3=n$ or $2n$.
If this is $n$, then (B2) is satisfied. If this is $2n$, then
$g^{-1}={\rm diag}(\zeta^{n-a_1}, \zeta^{n-a_2}, \zeta^{n-a_3})$
satisfies (B2) since $(n-a_1)+(n-a_2)+(n-a_3)=n$.
\end{proof}

\begin{rema}
\begin{itemize}
\item[(a)] The triangle equivalence $F:\Db(\A)\to\underline{\CM}^{\ZZ}(S^G)$ is obtained by Ueda \cite{Ued08}.
Our proof is very different since he uses a strong theorem due to Orlov~\cite{Orl05}.
\item[(b)] The triangle equivalence $G:\Cc_{d-1}(\A)\to\underline{\CM}(S^G)$ is an analog of an independent result proved by Thanhoffer de V\"olcsey and Van den Bergh \cite[Proposition~1.3]{TV10}.  They use generalized cluster categories associated with quivers with potential
instead of those associated with algebras of finite global dimension. 
\end{itemize}
\end{rema}

\subsection{Proof of Theorem \ref{main result for singularities}}

Let $G$ be a finite cyclic subgroup of $\SL_d(k)$ generated by
$g={\rm diag}(\zeta^{a_1},\ldots,\zeta^{a_d})$ as above, and
let $S^G$, $B$, $\underline{B}$, $A$ and $\underline{A}$ be as
defined in the previous subsection. Then $B=\End_{S^G}(S)$  is isomorphic to the skew group algebra $S*G$ by \cite{Aus86,Yos90}, which is
known to have global dimension $d$. We want to show that conditions
(A1*) to (A4) in the previous section are satisfied in this case. We
start with condition (A1*), and here we need some notation.

First we give a concrete description of the McKay quiver $Q$ of the cyclic group $G$ \cite{Mck80}.
The set $Q_0$ of vertices is $\ZZ/n\ZZ$.
The arrows are
\[x_j^i=x_j:i\to i+a_j\quad (i\in\ZZ/n\ZZ,\ 1\le j\le d).\]

\begin{prop}\label{presentation}
\begin{itemize}
\item[(a)] A presentation of $B$ is given by the McKay quiver with commutative relations
\[x_{j'}^{i+a_j}\ x_j^i=x_{j}^{i+a_{j'}}\ x_{j'}^i\quad (i\in\ZZ/n\ZZ,\ 1\le j,j'\le d).\]
\item[(b)] A presentation of $A$ is obtained from that of $B$ by
removing all arrows $x_j^i:i\to i'$ with $i>i'$.
\item[(c)] A presentation of $\A$ is obtained from that of $A$ by removing the vertex $0$.
\end{itemize}
\end{prop}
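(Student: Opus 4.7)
The plan is to prove the three parts in order, reducing (c) to (b) and (b) to a comparison with (a). Throughout, fix representatives $0\le i,i'<n$ for vertices and let $e_i\in kG\subset S*G$ denote the primitive idempotent $e_i=\frac{1}{n}\sum_{g\in G}\zeta^{-i}g$, which corresponds under $\End_{S^G}(S)\simeq S*G$ (\cite{Aus86,Yos90}) to the summand $T^i$ of $S=\bigoplus_i T^i$. Then $e_i e_j=\delta_{ij}e_i$, $\sum_i e_i=1$, and a direct computation using $g\cdot x_j=\zeta^{a_j}x_j$ shows $x_je_i=e_{i+a_j}x_j$, where the indices are taken modulo $n$. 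Consequently $x_j^i:=e_{i+a_j}x_je_i$ is a nonzero element of $e_{i+a_j}(S*G)e_i$, and the commutativity $x_jx_{j'}=x_{j'}x_j$ in $S$ translates into the relations stated in (a).

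For (a), let $\tilde B$ denote the path algebra of the McKay quiver modulo the stated commutativity relations. There is a well-defined algebra surjection $\Phi:\tilde B\to B$ sending each arrow $x_j^i$ to the element above. To check injectivity I would use a basis argument: the direct sum decomposition $B=\bigoplus_{i,i'}e_{i'}Se_i$ shows that $e_{i'}Be_i$ has basis $\{x_1^{b_1}\cdots x_d^{b_d}:\sum_jb_ja_j\equiv i'-i\pmod n\}$ (where each monomial is interpreted, after choosing any order of factors, as the product of the corresponding arrows starting at $i$). On the other side, because the commutativity relations let one permute the letters of any path, $e_{i'}\tilde Be_i$ is spanned by the same set of multisets $(b_1,\dots,b_d)$, so $\Phi$ matches bases and is an isomorphism.

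For (b), I would first identify $A=\End_{\Gr(S^G)}(T)$ explicitly. By tracking degrees one sees that a degree-preserving homomorphism $T^i\to T^{i'}$ is given by multiplication by an element of $S$ of rational degree $(i'-i)/n$; this space is zero for $i>i'$ and equals $S_{(i'-i)/n}$ for $i\le i'$. Thus $A=\bigoplus_{0\le i\le i'<n}S_{(i'-i)/n}$ is a subalgebra of $B$, generated by those arrows $x_j^i$ with $i+a_j<n$ (the arrows going ``forward'' without wrapping around $n$). Let $\tilde A$ denote the quiver algebra obtained from $\tilde B$ by deleting the arrows with $i>i'$ together with the relations involving such arrows. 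Again there is a surjection $\tilde A\to A$, and the same basis argument as in (a) applies once one observes the following key point: any monomial $x_1^{b_1}\cdots x_d^{b_d}$ with $\sum_j b_ja_j=i'-i$ and $0\le i\le i'<n$ may be realized as a path from $i$ to $i'$ staying inside $\{0,1,\dots,n-1\}$, because for any order of factors the partial sums $i+a_{j_1}+\cdots+a_{j_k}$ are bounded by $i'<n$. Hence all monomials are legitimate paths and $\tilde A\to A$ is an isomorphism.

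Finally, for (c) I use $\underline A=A/\langle e\rangle$ with $e=e_0$. Since for $i>0$ we have $\Hom_{\Gr(S^G)}(T^i,T^0)=S_{-i/n}=0$, no nonzero morphism between vertices in $\{1,\dots,n-1\}$ can factor through vertex $0$; hence the two-sided ideal $\langle e_0\rangle\subset A$ coincides with the span of all paths incident to the vertex $0$, and $A/\langle e_0\rangle$ is exactly the subquotient of the presentation in (b) obtained by removing the vertex $0$ and its incident arrows and relations. The main technical point, on which everything rests, is the basis-matching step in (a)--(b); beyond that, only careful bookkeeping of which arrows and relations survive each truncation is needed.
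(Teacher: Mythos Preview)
Your proof is correct and follows the same outline as the paper: establish (a), then read off (b) from the grading, then obtain (c) by factoring out the idempotent at the source vertex $0$. The paper itself is extremely terse here: it cites (a) as known, proves (b) in one line by observing that the arrow $x_j^i:i\to i'$ has degree $0$ if $i<i'$ and degree $1$ otherwise, and declares (c) clear.

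Your argument is more than a fleshing-out, however, because you supply a genuine verification that the paper's one-liner for (b) suppresses. Knowing that the degree-$0$ arrows are exactly the ``forward'' ones does not by itself guarantee that the degree-$0$ subalgebra $A=B_0$ is presented by the sub-quiver with the inherited relations: one must check that no degree-$0$ consequence of the relations in $\tilde B$ is lost when restricting to $\tilde A$. Your observation that, for a monomial of total weight $i'-i<n$, all partial sums lie in $[i,i']\subset[0,n)$ --- hence every adjacent transposition needed to pass between two orderings is itself a relation surviving in $\tilde A$ --- is exactly what closes this gap. Likewise, in (c) you make explicit why $eA(1-e)=0$ forces $\langle e_0\rangle$ to coincide with the span of paths through vertex $0$, which is the content behind the paper's ``clear''. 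So the two proofs agree in strategy, but yours is self-contained where the paper relies on references and on the reader.
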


\begin{proof}
(a) This is known (e.g.  \cite[Prop. 2.8(3)]{CMT07},\cite[Cor. 4.2]{BSW10}).

(b) By our grading on $T$, the degree of the morphism $x_j^i:T^i\to T^{i'}$ is $0$ if $i<i'$, and $1$ otherwise. Thus we have the assertion.

(c) This is clear.
\end{proof}

We denote by $Q_\ell$ the set of paths of length $\ell$, and by $kQ_\ell$ the $k$-vector space with basis $Q_\ell$.
Then $kQ_0$ is a $k$-algebra which we denote by $L:=kQ_0$.
Clearly we have
\[kQ_\ell=\underbrace{kQ_1\otimes_L\cdots\otimes_LkQ_1}_{\ell\textrm{ times}}.\]
Define a vector space $U_\ell$ as the factor space of $kQ_\ell$ modulo the subspace generated by
\[v\otimes x_i\otimes x_j\otimes v'+v\otimes x_j\otimes x_i\otimes v'.\]
We denote by $v_1\wedge v_2\wedge \cdots\wedge v_\ell$ the image of $v_1\otimes v_2\otimes \cdots\otimes v_\ell$ in $U_\ell$.
Then $U_\ell$ has a basis consisting of
\[x_{j_\ell}\wedge x_{j_{\ell-1}}\wedge \cdots\wedge x_{j_1}\]
where
\[\xymatrix{i\ar[r]^-{x_{j_1}}&i+a_{j_1}\ar[r]^-{x_{j_2}}&\cdots\ar[r]^-{x_{j_\ell}}&i+a_{j_1}+\cdots+a_{j_\ell}}\]
is a path of length $\ell$ satisfying $j_1<j_2<\cdots<j_\ell$. Now let
\[\xymatrix{P_\bullet:=(B\otimes_L U_d\otimes_LB\ar[r]^-{\delta_d}&B\otimes_L U_{d-1}\otimes_LB\ar[r]^-{\delta_{d-1}}&\cdots\ar[r]^-{\delta_1}&B\otimes_L U_0\otimes_LB),}\]
where $\delta_\ell$ is defined by
\begin{eqnarray*}
&&\delta_\ell(b\otimes(x_{j_1}\wedge x_{j_2}\wedge \cdots\wedge x_{j_{\ell-1}}\wedge x_{j_\ell})\otimes b')\\
&:=&\sum_{i=1}^\ell(-1)^{i-1}(bx_{j_i}\otimes(x_{j_1}\wedge\cdots\stackrel{\vee}{x}_{j_i}\cdots\wedge x_{j_\ell})\otimes b' + b\otimes(x_{j_1}\wedge\cdots\stackrel{\vee}{x}_{j_i}\cdots\wedge x_{j_\ell})\otimes x_{j_i}b').
\end{eqnarray*}
Then we have the following result which implies the condition (A1*).

\begin{thma}\label{S*G is CY}
The complex $P_\bullet$ is a projective resolution of the graded $B^{\rm
e}$-module $B$ satisfying $P_\bullet\simeq P^{\vee}_{\bullet}[d](-1)$ in $\Cc^{\rm b}(\grproj B^{\rm e})$.
In particular $B$ is a bimodule $d$-Calabi-Yau algebra of Gorenstein
parameter $1$.
\end{thma}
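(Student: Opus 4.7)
My plan is to recognize $P_\bullet$ as a Koszul-type bimodule resolution of $B$ over its degree-zero subalgebra $L := kQ_0 \simeq k^n$, and then derive the self-duality from Poincar\'e duality on the exterior part, with the Gorenstein parameter~$1$ arising from condition (B2).

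First, I would recall that $B \simeq S*G$ as an $L$-algebra (Auslander; and since $k$ has characteristic zero, $L \simeq kG$ decomposes into characters of $G$ indexing the vertices). Then $B$ is a quadratic $L$-algebra, generated over $L$ by $V := kQ_1$ with commutativity relations, and one checks that $B$ is Koszul as an $L$-algebra. Its Koszul dual is precisely the graded $L$-algebra $\bigoplus_\ell U_\ell$ defined in the statement, where $U_\ell$ is the ``exterior part'' of the path algebra at length $\ell$. Under this identification the complex $P_\bullet$ is the standard Koszul bimodule resolution associated to this Koszul pair, and $\delta_\ell$ is the usual Koszul differential.

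Second, for acyclicity, I would argue via reduction to the classical case. Over $k$, the Koszul complex
\[ 0 \to S\otimes \Lambda^d V\otimes S \to \cdots \to S\otimes V\otimes S \to S\otimes S \to S\to 0\]
is exact; applying the skew group construction with $G$ (permissible since $kG$ is semisimple in characteristic zero) produces an exact bimodule resolution of $S*G$, and one verifies that after the Morita identification with $B$ and the decomposition of $S\otimes \Lambda^\ell V\otimes S$ along pairs of characters of $G$, this is exactly $P_\bullet$. (As an alternative one can write down the contracting homotopy directly and sign-check.) Each $P_\ell$ is manifestly a projective $B^{\rm e}$-module since it is a direct summand of a free one, so projectivity is automatic.

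For the self-duality $P_\bullet\simeq P_\bullet^\vee[d](-1)$, the key is the computation of $U_d$. Because $j_1<\cdots<j_d$ forces $(j_1,\ldots,j_d)=(1,\ldots,d)$, each $e_iU_de_{i'}$ is one-dimensional and nonzero only when $i'=i+a_1+\cdots+a_d=i+n=i$ in $\ZZ/n\ZZ$ (this is where (B2) enters). Moreover the graded degree of the top path $x_d\wedge\cdots\wedge x_1$ starting at $i$ is exactly the number of times the partial sums cross the threshold $n$, which equals $1$ for every $i$. Hence $U_d\simeq L(-1)$ as graded $L$-bimodules. The exterior-algebra wedge gives a perfect pairing $U_\ell\otimes_LU_{d-\ell}\to U_d\simeq L(-1)$, so $U_\ell^\vee:=\Hom_{L^{\rm e}}(U_\ell,L^{\rm e})\simeq U_{d-\ell}(-1)$ as graded $L$-bimodules. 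Tensoring with $B\otimes_L-\otimes_L B$ and using $\Hom_{B^{\rm e}}(B\otimes_LU_\ell\otimes_L B, B^{\rm e})\simeq B\otimes_L U_\ell^\vee\otimes_L B$ gives the desired termwise isomorphism $P_\ell^\vee\simeq P_{d-\ell}(-1)$; compatibility with the differentials is a standard Koszul sign-check.

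The main obstacle will be bookkeeping: verifying that the duality is a \emph{map of complexes}, i.e.\ that the Koszul differentials $\delta_\ell$ transport correctly under the wedge pairing (the usual $(-1)^{i-1}$ signs in $\delta_\ell$ need to be reconciled with the Poincar\'e pairing signs), and tracking the single degree shift against the grading of $B$ to obtain Gorenstein parameter exactly $1$. Once these sign and degree calculations are in place, the bimodule Calabi-Yau property with the correct parameter follows from the termwise isomorphism plus the chain-level compatibility.
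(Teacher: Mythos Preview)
Your proposal is correct and follows essentially the same line as the paper, only more explicitly: the paper's proof simply cites \cite{BSW10} for the ungraded statement (that $P_\bullet$ is a projective bimodule resolution with $P_\bullet\simeq P_\bullet^\vee[d]$), then introduces the degree map $g$ on $Q_1$ extending to $U_\ell$ and observes via (B2) that each $U_\ell$ decomposes into parts of degree $0$ and $1$, making the differentials homogeneous---exactly your computation that $U_d\simeq L(-1)$ and the resulting shift by $(-1)$. Your Koszul and skew-group-algebra arguments unpack what the citation provides.
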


\begin{proof}
The assertion not involving the grading is known and easy to check (e.g. \cite[Thm 6.2]{BSW10}). 
We will show that each $\delta_{\ell}$ is homogeneous of degree $0$ by introducing a certain grading on $P_\bullet$.
Define the degree map $g:Q_1\to\ZZ$ by
\[g(i\xrightarrow{x_j}i'):=\left\{
\begin{array}{cc}
0& 0\le i<i'<n,\\
1& 0\le i'<i<n.
\end{array}\right.\]
Then we have a well-defined degree map
\[g(x_{j_1}\wedge \cdots\wedge x_{j_\ell}):=g(x_{j_1})+\cdots+g(x_{j_\ell})\]
on basis vectors of $U_\ell$. Since the value is always $0$ or $1$ by the condition (B2) $a_1+\cdots+a_d=n$, we have a decomposition
\[U_\ell=U_\ell^0\oplus U_\ell^1\]
where $U_\ell^0$ (respectively, $U_\ell^1$) is the subspace spanned by the elements of degree $0$ (respectively, $1$).
We regard $U_\ell^0$ as having degree $0$ and $U_\ell^1$ as having degree $1$. Then each map $\delta_\ell$ is homogeneous of degree $0$. 
\end{proof}

We proceed to show the other conditions.

\begin{lema}\label{check}
The graded algebra $S*G$ satisfies the conditions (A1*), (A2), (A3) and (A4) in Theorem
\ref{main diagram}.
\end{lema}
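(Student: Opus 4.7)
The plan is to verify each of the four conditions (A1*), (A2), (A3), (A4) separately for the graded algebra $B=\End_{S^G}(T)$ (Morita equivalent, as an ungraded algebra, to $S*G$).

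For (A1*), I would simply invoke Theorem~\ref{S*G is CY}, which has just been established and explicitly exhibits the self-dual projective bimodule resolution $P_\bullet\simeq P_\bullet^{\vee}[d](-1)$ with all differentials homogeneous of degree $0$. So (A1*) is already done.

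For (A2), noetherianness follows from the fact that $B$ is Morita equivalent to $S*G$, which is a finitely generated module over the noetherian ring $S^G$ (or over $S$): the polynomial ring $S$ is noetherian by the Hilbert basis theorem, and $S*G$ is a free $S$-module of rank $|G|$. Hence $B$ is (both left and right) noetherian.

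For (A3), I would show $\underline{B}=B/\langle e\rangle\simeq \End_{\underline{\CM}(S^G)}(T)$. The idempotent $e\in B$ corresponds to the direct summand $T^0=S^G$, which is the unique (up to isomorphism) projective Cohen-Macaulay summand of $T$, so the two-sided ideal $\langle e\rangle$ coincides with the ideal of morphisms factoring through $\add S^G$. Because $G$ acts freely on $k^d\setminus\{0\}$, the ring $R=S^G$ is an isolated Gorenstein singularity, and a classical result of Auslander (see \cite{Aus78,Aus86,Yos90}) says that $\underline{\CM}(R)$ is $\Hom$-finite over $k$. Since $T$ decomposes into finitely many indecomposable summands $T^0,\ldots,T^{n-1}$, the algebra $\End_{\underline{\CM}(S^G)}(T)$ is finite dimensional over $k$.

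For (A4), I would use the explicit quiver description of $A$ from Proposition~\ref{presentation}(b). The condition $eA(1-e)=0$ asserts that there are no paths in the quiver of $A$ from nonzero vertices to the vertex $0$. In the McKay quiver, the arrows ending at $0$ are precisely $x_j^{n-a_j}:n-a_j\to 0$ for $1\le j\le d$. By condition (B1) we have $0<a_j<n$, so $n-a_j>0$, and hence each such arrow $x_j^i:i\to i'$ with $i'=0$ satisfies $i>i'$ and therefore has degree $1$ under the grading of Theorem~\ref{S*G is CY}. These arrows are removed when passing from $B$ to $A=B_0$, so no paths in $A$ end at $0$ starting from a different vertex, giving $eA(1-e)=0$.

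The main subtlety is (A3), where one must invoke the external isolated-singularity input to get $\Hom$-finiteness of the stable category; the other three conditions are either immediate from the just-proved Theorem~\ref{S*G is CY} (A1*), standard finiteness (A2), or direct combinatorics on the McKay quiver using (B1) and the degree convention (A4).
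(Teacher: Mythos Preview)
Your proposal is correct and follows essentially the same approach as the paper's own proof: (A1*) by invoking Theorem~\ref{S*G is CY}, (A2) from noetherianness of $S*G$, (A3) from $S^G$ being an isolated singularity (so $\underline{\CM}(S^G)$ is $\Hom$-finite by Auslander), and (A4) from the vertex $0$ being a source in the quiver of $A$. You supply more detail than the paper in several places (e.g.\ explicitly identifying the arrows into $0$ and checking their degree), but the logical structure is identical.
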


\begin{proof}
(A1*) This was shown in the previous theorem.

(A2)  The ring $B=S*G$ is clearly noetherian.

(A3) $S^G$ is an isolated singularity by (B1).
Then the stable category $\underline{\CM}(S^G)$ has finite dimensional homomorphism spaces \cite{Aus78,Yos90}. Hence $\dim_k\B$ is finite.

(A4) It is a direct consequence of the definition of $A$ that the vertex $0$ in the McKay quiver is a source.
We use the idempotent $e$ corresponding to this vertex.
\end{proof}

Now Theorem~\ref{main result for singularities} is an immediate
consequence of Theorem~\ref{main diagram} and
Lemma~\ref{check}. \qed

\medskip
In the subsections~\ref{subsectiond=2},~\ref{subsectiond=3} and~\ref{subsectiond}, which are devoted to examples, we use the notation $\frac{1}{n}(a_1,\ldots,a_d)$ for the element
${\rm diag}(\zeta^{a_1},\ldots,\zeta^{a_d})\in\SL _d(k)$, where
$a_1+\ldots+a_d=n$ and $\zeta$ is a  primitive $n$-root of unity.

\subsection{Example: Case $d=2$}\label{subsectiond=2}

Let $G\subset\SL_2(k)$ be a finite cyclic subgroup. Then there
exists a generator of the form $\frac{1}{n}(1,n-1)$. The algebra
$S*G$ is presented by the McKay quiver
\[\scalebox{.8}{
\begin{tikzpicture}[>=stealth, scale=.8]
\node (P1) at (0,0){$1$}; \node (P2) at (5,0){$2$}; \node (P3) at
(10,0){$3$};

\node(P6) at (20,0){$n-2$}; \node(P7) at (25,0){$n-1$}; \node(P0) at
(12,4){$0$}; \draw [->] (0.3,0.2)--node[fill=white,inner
sep=.5mm,]{$y$}(4.7,0.2); \draw [<-]
(0.3,-0.2)--node[fill=white,inner sep=.5mm,]{$x$}(4.7,-0.2);

\draw [->] (5.3,0.2)--node[fill=white,inner
sep=.5mm,]{$y$}(9.7,0.2);
\draw [<-] (5.3,-0.2)--node[fill=white,inner
sep=.5mm,]{$x$}(9.7,-0.2); \draw[loosely dotted](11,0)--(19,0);

\draw [->] (21,0.2)--node[fill=white,inner sep=.5mm,]{$y$}(24,0.2);

\draw [<-] (21,-0.2)--node[fill=white,inner
sep=.5mm,]{$x$}(24,-0.2);

\draw[->] (11.2,3.8)--node[fill=white,inner
sep=.5mm,]{$y$}(0.2,0.8); \draw[<-]
(11.6,3.4)--node[fill=white,inner sep=.5mm,]{$x$}(0.6,0.4);

\draw[<-] (12.8,3.8)--node[fill=white,inner
sep=.5mm,]{$y$}(24.8,0.8); \draw[->]
(12.4,3.4)--node[fill=white,inner sep=.5mm,]{$x$}(24.4,0.4);
\end{tikzpicture}}
\]
with the commutativity relation $xy=yx$. The grading induced by the
generator $\frac{1}{n}(1,n-1)$ makes the arrows $x$ of degree $0$
and the arrows $y$ of degree $1$. The idempotent corresponding to the direct summand $T_0$ of $T$ corresponds to the vertex $0$ of the McKay quiver. Hence, the algebra $\A=\End_{\underline{\CM}^{\ZZ}(S^G)}(T)$ is isomorphic to $kQ$ where $Q$ is $A_{n-1}$ with the linear orientation.  Hence by Theorem~\ref{main result for singularities}, we obtain a triangle equivalence
$\underline{\CM}(S^G)\simeq \Cc_1(A_{n-1})$.

\bigskip
More generally, if $G$ is a finite subgroup (not necessarily cyclic) of $\SL_2(k)$, the algebra $B=S*G$ is Morita equivalent to the preprojective algebra $\Pi_{2}(\widetilde{Q})$ of an extended Dynkin quiver $\widetilde{Q}$. There exists a $\mathbb{Z}$-grading on $B$ such that $A:=B_0$ is Morita equivalent to the
path algebra $k\widetilde{Q}$ and $B$ is bimodule 2-Calabi-Yau of
Gorenstein parameter $1$. Moreover $B$ has an idempotent $e$ such that $eBe=S^G$ and $e$ is the exceptional vertex of $\widetilde{Q}$. Thus by Theorem \ref{main diagram} we have a triangle equivalence
$\Cc_{1}(kQ)\simeq\underline{\CM}(S^G)$ for
$Q:=\widetilde{Q}\backslash\{e\}$. 

Moreover, the category $\Cc_1(kQ)$ is equivalent to the category $\proj \Pi_2(kQ)$, where $\Pi_2(kQ)$ is the preprojective algebra associated to the Dynkin quiver $Q$.
Hence we recover the well-known proposition below.

\begin{prop}\label{2CY}
Let $G\subset\SL_2(k)$ be a finite subgroup and $Q$ be the corresponding Dynkin quiver.
\begin{itemize}
\item[(a)] \cite{Rei87,RV89,BSW10, Ami07} We have a triangle equivalence $\underline{\CM}(S^G)\simeq\Cc_1(kQ)$ and an equivalence $\underline{\CM}(S^G)\simeq\proj\Pi_2(kQ)$.

\item[(b)] \cite{KST07,LP06} We have a triangle equivalence $\underline{\CM}^{\ZZ}(S^G)\simeq\Db(kQ)$ and an equivalence $\underline{\CM}^{\ZZ}(S^G)\simeq\gr\proj\Pi_2(kQ)$.
\end{itemize}
\end{prop}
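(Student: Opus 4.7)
The plan is to deduce the proposition from the main Theorem~\ref{main diagram} applied with $d=2$, using the classical identification of $S*G$ with a preprojective algebra.

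First, I would invoke the classical facts cited in the excerpt from \cite{Rei87,RV89,BSW10}: for any finite $G\subset\SL_2(k)$, the skew group algebra $B:=S*G$ is Morita equivalent to the preprojective algebra $\Pi_2(\widetilde{Q})$ of an extended Dynkin quiver $\widetilde{Q}$ whose underlying graph is obtained from $Q$ by adjoining one exceptional vertex $0$. The weight decomposition of $S$ under $G$ endows $B$ with a $\ZZ$-grading for which $A:=B_0$ becomes Morita equivalent to $k\widetilde{Q}$ (with an orientation in which $0$ is a source), and in this grading $B$ is bimodule $2$-Calabi-Yau of Gorenstein parameter $1$. For cyclic $G$ this is essentially Theorem~\ref{S*G is CY}; for general $G$ the same Koszul-type bimodule resolution (built from $\Lambda k^2$) applies.

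Second, I would take $e\in B$ to be the idempotent at the exceptional vertex $0$. Then $eBe\simeq S^G$ (the standard recovery of the invariant ring from the trivial summand of $S$), and $\underline{A}=A/\langle e\rangle$ is Morita equivalent to $k\widetilde{Q}/\langle e_0\rangle$; since quotienting a path algebra by a vertex idempotent removes that vertex together with all incident arrows, one obtains $\underline{A}\simeq kQ$. I would then check the hypotheses of Theorem~\ref{main diagram}: (A1*) is the CY claim above; (A2) holds because $B$ is module-finite over the noetherian ring $S^G$; (A3) holds because $\underline{B}\simeq\Pi_2(kQ)$ is finite dimensional for Dynkin $Q$; and (A4) is exactly the source condition at $e_0$ noted above. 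Applying Theorem~\ref{main diagram}(a) and (b) immediately yields the two triangle equivalences $\underline{\CM}^\ZZ(S^G)\simeq \Db(kQ)$ and $\underline{\CM}(S^G)\simeq\Cc_1(kQ)$.

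Third, for the two remaining additive equivalences: since $Q$ is Dynkin, $\Pi_2(kQ)$ is finite dimensional, so Theorem~\ref{cluster category}(b) gives that $\pi(kQ)\in\Cc_1(kQ)$ is $1$-cluster tilting with endomorphism algebra $\Pi_2(kQ)$. Because the higher vanishing conditions are vacuous for $d=1$, being $1$-cluster tilting means $\add(\pi(kQ))=\Cc_1(kQ)$, so the Yoneda functor $\Hom_{\Cc_1(kQ)}(\pi(kQ),-)$ realizes $\Cc_1(kQ)\simeq\proj\Pi_2(kQ)$. Transporting through the equivalence above gives $\underline{\CM}(S^G)\simeq\proj\Pi_2(kQ)$. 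The graded analogue is obtained identically using the $\ZZ$-graded $1$-cluster tilting subcategory $\add\{Be(i)\mid i\in\ZZ\}\subset\underline{\CM}^\ZZ(S^G)$ of Proposition~\ref{cluster tilting graded CM}(c), whose graded endomorphism algebra is $\underline{B}\simeq\Pi_2(kQ)$.

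The main obstacle is the uniform verification for non-cyclic $G$ that the grading on $B=S*G$ really makes $B$ bimodule $2$-Calabi-Yau of Gorenstein parameter $1$ with $B_0$ Morita equivalent to $k\widetilde{Q}$; the excerpt proves this for cyclic $G$ in Theorem~\ref{S*G is CY}, while for general $G$ one has to either appeal to \cite{RV89,BSW10} or redo the Koszul resolution argument in the non-cyclic setting. Once this is in hand, everything else is a direct corollary of Theorem~\ref{main diagram}.
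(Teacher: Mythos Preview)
Your proposal is correct and follows essentially the same route as the paper: identify $B=S*G$ (up to Morita equivalence) with $\Pi_2(\widetilde{Q})$ graded so that $B_0\simeq k\widetilde{Q}$ and $B$ is bimodule $2$-Calabi-Yau of Gorenstein parameter $1$, take $e$ at the exceptional vertex, check (A1*)--(A4), and apply Theorem~\ref{main diagram}; then obtain the additive equivalences from the fact that $1$-cluster tilting is vacuous so $\Cc_1(kQ)=\add\pi(kQ)\simeq\proj\Pi_2(kQ)$. The paper likewise only asserts the graded Calabi-Yau input for non-cyclic $G$ by citing \cite{RV89,BSW10}, so your identification of that step as the one requiring external input matches the paper exactly.
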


\begin{rema}\label{in fact triangle}
From \cite{Rei87,RV89,BSW10}, we get an equivalence $\underline{\CM}(S^G)\simeq\Cc_1(kQ)$. This equivalence implies that the category $\underline{\CM}(S^G)$ is standard, that is, is equivalent to the mesh category of its Auslander-Reiten quiver. Since it is also an algebraic triangulated category, one deduces that it is a triangle equivalence by \cite[Theorem 7.2]{Ami07}. It was also proved in \cite[Corollary 9.3]{Ami07} that the category $\proj \Pi_2(kQ)$ is naturally triangulated.
\end{rema}

\begin{rema} Let $\A$ be a finite-dimensional algebra of global dimension at most $1$. Then, if $k$ is algebraically closed, $\A$ is Morita equivalent to the path algebra $kQ$ of an acyclic quiver $Q$. 
The 1-cluster category $\Cc_1(kQ)$ is $\Hom$-finite  if and only if $Q$ is of Dynkin type. Thus we obtain a kind of converse of Theorem \ref{main diagram} for $d=2$: every $1$-cluster category can be realized as the stable category of Cohen-Macaulay modules over an isolated singularity.
\end{rema}

\subsection{Example: Case $d=3$}\label{subsectiond=3} $ $
Let $G\subset{\sf SL}_3(k)$ be the subgroup generated by $\frac{1}{5}(1,2,2)$. Then $B=S*G$ is presented by the McKay quiver
\[\scalebox{.8}{
\begin{tikzpicture}[>=stealth, scale=.8]
\node (P0) at (0,0){$0$}; \node (P1) at (4,3){$1$}; \node (P2) at
(8,0){$2$}; \node (P3) at (6,-4){$3$}; \node (P4) at (2,-4){$4$};

\draw [->] (0.5,0.15) -- node [fill=white,inner
sep=.5mm,xshift=2mm]{$y$} (7.5,0.15); \draw [->] (0.5,-0.15) -- node
[fill=white,inner sep=.5mm, xshift=-2mm]{$z$} (7.5,-0.15); \draw
[->] (7.4,-0.25) -- node [fill=white,inner sep=.5mm,
xshift=-1mm]{$z$} (2.5,-3.7); \draw [->] (7.5,-.5) -- node
[fill=white,inner sep=.5mm, xshift=1mm]{$y$} (2.6,-3.9); \draw [->]
(2,-3.5) -- node [fill=white,inner sep=.5mm, yshift=1mm]{$y$}
(3.6,2.6); \draw [->] (2.2,-3.5) -- node [fill=white,inner sep=.5mm,
yshift=-1mm]{$z$} (3.9,2.6); \draw [->] (4.4,2.6) -- node
[fill=white,inner sep=.5mm, yshift=1mm]{$y$} (6.3,-3.5); \draw [->]
(4.1,2.6) -- node [fill=white,inner sep=.5mm, yshift=-1mm]{$z$}
(6,-3.5); \draw [<-] (0.6,-0.25) -- node [fill=white,inner sep=.5mm,
xshift=1mm]{$z$} (5.5,-3.7); \draw [<-] (0.5,-.5) -- node
[fill=white,inner sep=.5mm, xshift=-1mm]{$y$} (5.4,-3.9);

\draw [->] (P0)-- node [fill=white,inner sep=.5mm]{$x$}(P1); \draw
[->] (P1)-- node [fill=white,inner sep=.5mm]{$x$}(P2); \draw [->]
(P2)-- node [fill=white,inner sep=.5mm]{$x$}(P3); \draw [->] (P3)--
node [fill=white,inner sep=.5mm]{$x$}(P4); \draw [->] (P4)-- node
[fill=white,inner sep=.5mm]{$x$}(P0);
\end{tikzpicture}}
\]
with the commutativity relations $xy=yx$, $yz=zy$, $zx=xz$. By the
choice of the grading, the algebra $A$, which is the degree 0 part
of $B$, is presented by the quiver
\[\scalebox{.8}{
\begin{tikzpicture}[>=stealth, scale=.8]
\node (P0) at (0,0){$0$}; \node (P1) at (4,3){$1$}; \node (P2) at
(8,0){$2$}; \node (P3) at (6,-4){$3$}; \node (P4) at (2,-4){$4$};

\draw [->] (0.5,0.15) -- node [fill=white,inner
sep=.5mm,xshift=2mm]{$y$} (7.5,0.15); \draw [->] (0.5,-0.15) -- node
[fill=white,inner sep=.5mm, xshift=-2mm]{$z$} (7.5,-0.15); \draw
[->] (7.4,-0.25) -- node [fill=white,inner sep=.5mm,
xshift=-1mm]{$z$} (2.5,-3.7); \draw [->] (7.5,-.5) -- node
[fill=white,inner sep=.5mm, xshift=1mm]{$y$} (2.6,-3.9);

\draw [->] (4.5,2.6) -- node [fill=white,inner sep=.5mm,
yshift=1mm]{$y$} (6.3,-3.5); \draw [->] (4.2,2.6) -- node
[fill=white,inner sep=.5mm, yshift=-1mm]{$z$} (6,-3.5);

\draw [->] (P0)-- node [fill=white,inner sep=.5mm]{$x$}(P1); \draw
[->] (P1)-- node [fill=white,inner sep=.5mm]{$x$}(P2); \draw [->]
(P2)-- node [fill=white,inner sep=.5mm]{$x$}(P3); \draw [->] (P3)--
node [fill=white,inner sep=.5mm]{$x$}(P4);

\end{tikzpicture}}
\]
with the commutativity relations. The idempotent $e$ of the algebra $B$ corresponds to the summand $S^G$ which corresponds to the vertex $0$. Therefore $\underline{A}$ is presented by the quiver
\[\scalebox{.8}{
\begin{tikzpicture}[>=stealth, scale=.6]

\node (P1) at (4,3){$1$}; \node (P2) at (8,0){$2$}; \node (P3) at
(6,-4){$3$}; \node (P4) at (2,-4){$4$};

\draw [->] (7.4,-0.25) -- node [fill=white,inner sep=.5mm,
xshift=-1mm]{$z$} (2.5,-3.7); \draw [->] (7.5,-.5) -- node
[fill=white,inner sep=.5mm, xshift=1mm]{$y$} (2.6,-3.9);

\draw [->] (4.5,2.6) -- node [fill=white,inner sep=.5mm,
yshift=1mm]{$y$} (6.3,-3.5); \draw [->] (4.2,2.6) -- node
[fill=white,inner sep=.5mm, yshift=-1mm]{$z$} (6,-3.5);

\draw [->] (P1)-- node [fill=white,inner sep=.5mm]{$x$}(P2); \draw
[->] (P2)-- node [fill=white,inner sep=.5mm]{$x$}(P3); \draw [->]
(P3)-- node [fill=white,inner sep=.5mm]{$x$}(P4);

\end{tikzpicture}}\]
with the commutativity relations. By Theorem~\ref{main result for singularities} the category $\underline{\CM}(S^G)$ is triangle equivalent to the generalized cluster category $\Cc_2(\underline{A})$ .

Now take another generator of the group $G$  given by
$\frac{1}{5}(3,1,1)$. Then the algebra $B$ is same as the above, but has a different grading. We denote by $A'$ its degree zero subalgebra.
One then easily checks that the algebra $\underline{A'}$ is given in this case by the quiver 
\[\scalebox{1}{
\begin{tikzpicture}[>=stealth, scale=1.2]

\node (P1) at (0,2){$1'$}; \node (P2) at (2,2){$2'$}; \node (P3) at
(2,0){$3'$}; \node (P4) at (0,0){$4'$};

\draw [->] (0.2,2.1)--node[fill=white,inner sep=.5mm, xshift=1mm]{$y$}(1.8,2.1); \draw [->]
(0.2,1.9)--node[fill=white,inner sep=.5mm, xshift=-1mm]{$z$}(1.8,1.9); \draw [->]
(2.1,1.8)--node[fill=white,inner sep=.5mm, 
yshift=-1mm]{$y$}(2.1,0.2); \draw [->]
(1.9,1.8)--node[fill=white,inner sep=.5mm,
yshift=1mm]{$z$}(1.9,0.2); \draw [->]
(1.8,-0.1)--node[fill=white,inner sep=.5mm,
xshift=1mm]{$y$}(0.2,-0.1); \draw [->]
(1.8,0.1)--node[fill=white,inner sep=.5mm, xshift=-1mm]{$z$}(0.2,
0.1);

\draw [->] (P1)-- node [fill=white,inner sep=.5mm]{$x$}(P4);

\end{tikzpicture}}
\]with commutativity relations.
 
By Theorem~\ref{main result for singularities} the category $\underline{\CM}(S^G)$ is triangle equivalent to the generalized cluster category $\Cc_2(\underline{A'})$. Hence we get a triangle equivalence between the generalized cluster categories $\Cc_2(\underline{A})\simeq\Cc_2(\underline{A'})$, that is,the algebras $\underline{A}$ and $\underline{A'}$ are \emph{cluster equivalent} in the sense of ~\cite{AO10}. However, one can show that the algebras $\underline{A}$ and $\underline{A'}$ are not derived equivalent since they have different Coxeter polynomials. (One can also see this using results of~\cite{AO10}.)
Now we have two different gradings on $S^G$, which we denote by $\ZZ$ and $\ZZ'$.
Then we have
$$\underline{\CM}^{\ZZ}(S^G)\simeq\Db(\A)\ {\not\simeq}\ \Db(\A')\simeq\underline{\CM}^{\ZZ'}(S^G).$$

\subsection{Example: General $d$}\label{subsectiond}
 Now let $d=n$ and $G$ be generated by $\frac{1}{d}(1,\ldots,1)$.  Then,
proceeding as before, it is not hard to see that $B=S*G$ is presented by the McKay quiver
\[\scalebox{.8}{
\begin{tikzpicture}[>=stealth, scale=.8]
\node (P1) at (0,0){$1$}; \node (P2) at (5,0){$2$}; \node (P3) at
(10,0){$3$};

\node(P6) at (20,0){$d-2$}; \node(P7) at (25,0){$d-1$}; \node(P0) at
(12,4){$0$}; \draw [->] (0.3,0.3)--node[fill=white,inner
sep=.5mm,]{$x_1$}(4.7,0.3);
\draw [->] (0.3,0)--node[fill=white,inner sep=.5mm,]{$x_2$}(4.7,0);
\draw [->] (0.3,-0.6)--node[fill=white,inner
sep=.5mm,]{$x_d$}(4.7,-0.6);

\draw [->] (5.3,0.3)--node[fill=white,inner
sep=.5mm,]{$x_1$}(9.7,0.3);
\draw [->] (5.3,0)--node[fill=white,inner sep=.5mm,]{$x_2$}(9.7,0);
\draw [->] (5.3,-0.6)--node[fill=white,inner
sep=.5mm,]{$x_d$}(9.7,-0.6); \draw[loosely dotted](11,0)--(19,0);
\draw[loosely dotted] (2,0)--(2,-0.6); \draw[loosely dotted]
(3,0)--(3,-0.6); \draw[loosely dotted] (7,0)--(7,-0.6);
\draw[loosely dotted] (8,0)--(8,-0.6); \draw[loosely dotted]
(22,0)--(22,-0.6); \draw[loosely dotted] (23,0)--(23,-0.6); \draw
[->] (21,0.3)--node[fill=white,inner sep=.5mm,]{$x_1$}(24,0.3);
\draw [->] (21,0)--node[fill=white,inner sep=.5mm,]{$x_2$}(24,0);
\draw [->] (21,-0.6)--node[fill=white,inner
sep=.5mm,]{$x_d$}(24,-0.6);

\draw[->] (11,4)--node[fill=white,inner
sep=.5mm,xshift=-4mm]{$x_1$}(0,1); \draw[->]
(11.2,3.8)--node[fill=white,inner sep=.5mm,]{$x_2$}(0.2,0.8);
\draw[->] (11.6,3.4)--node[fill=white,inner
sep=.5mm,]{$x_d$}(0.6,0.4);

\draw[<-] (13,4)--node[fill=white,inner
sep=.5mm,xshift=4mm]{$x_1$}(25,1); \draw[<-]
(12.8,3.8)--node[fill=white,inner sep=.5mm,]{$x_2$}(24.8,0.8);
\draw[<-] (12.4,3.4)--node[fill=white,inner
sep=.5mm,]{$x_d$}(24.4,0.4);

\end{tikzpicture}}
\]
with the commutative relations $x_jx_i=x_jx_i$. Then, with the
grading corresponding to the generator $\frac{1}{d}(1,\ldots,1)$,
one can check that the algebra $A$ is the $d$-Beilinson algebra and the algebra $\underline{A}$ is given by the
quiver
\[\scalebox{.8}{
\begin{tikzpicture}[>=stealth, scale=.8]
\node (P1) at (0,0){$1$}; \node (P2) at (5,0){$2$}; \node (P3) at
(10,0){$3$};

\node(P6) at (20,0){$d-2$}; \node(P7) at (25,0){$d-1$};

\draw [->] (0.3,0.3)--node[fill=white,inner
sep=.5mm,]{$x_1$}(4.7,0.3);
\draw [->] (0.3,0)--node[fill=white,inner sep=.5mm,]{$x_2$}(4.7,0);
\draw [->] (0.3,-0.6)--node[fill=white,inner
sep=.5mm,]{$x_d$}(4.7,-0.6);

\draw [->] (5.3,0.3)--node[fill=white,inner
sep=.5mm,]{$x_1$}(9.7,0.3);
\draw [->] (5.3,0)--node[fill=white,inner sep=.5mm,]{$x_2$}(9.7,0);
\draw [->] (5.3,-0.6)--node[fill=white,inner
sep=.5mm,]{$x_d$}(9.7,-0.6); \draw[loosely dotted](11,0)--(19,0);
\draw[loosely dotted] (2,0)--(2,-0.6); \draw[loosely dotted]
(3,0)--(3,-0.6); \draw[loosely dotted] (7,0)--(7,-0.6);
\draw[loosely dotted] (8,0)--(8,-0.6); \draw[loosely dotted]
(22,0)--(22,-0.6); \draw[loosely dotted] (23,0)--(23,-0.6); \draw
[->] (21,0.3)--node[fill=white,inner sep=.5mm,]{$x_1$}(24,0.3);
\draw [->] (21,0)--node[fill=white,inner sep=.5mm,]{$x_2$}(24,0);
\draw [->] (21,-0.6)--node[fill=white,inner
sep=.5mm,]{$x_d$}(24,-0.6);
\end{tikzpicture}}
\]
with the commutativity relations.

For the case $d=3$ the triangle equivalence $\Cc_2(\underline{A})\simeq \underline{\CM}(S^G)$ was already proved in \cite{KR08} using a recognition theorem for the acyclic $2$-cluster category.

\section{Examples coming from dimer models}

In this section we show that our main theorem applies to examples coming from dimer models which do not come from quotient singularities. This builds upon results from \cite{ Bro08, IU09, Dav11,Boc11} which we recall.

\subsection{3-Calabi-Yau algebras from dimer models}

Let $\Gamma$ be a bipartite graph on a torus. We denote by $\Gamma_0$ (resp. $\Gamma_1$, and $\Gamma_2$) the set of vertices (resp. edges and faces) of the graph. To such a graph we associate a quiver with a potential $(Q,W)$ in the sense of \cite{DWZ08}. The quiver viewed as an oriented graph on the torus is the dual of the graph $\Gamma$. Faces of $Q$ dual to white vertices are oriented clockwise and faces of $Q$ dual to black vertices are oriented anti-clockwise. Hence any vertex $v\in\Gamma_0$ corresponds canonically to a cycle $c_v$ of $Q$. The potential $W$ is defined as $$W=\sum_{v \textrm{ white}}c_v-\sum_{v \textrm{ black}}c_v.$$ Assume that there exists a consistent charge on this graph, that is, a map $R:Q_1\to \mathbb{R}_{>0}$ such that 
\begin{itemize}
\item $\forall v\in \Gamma_0\quad  \sum_{a\in Q_1, a\in c_v} R(a)=2.$
\item $\forall i\in Q_0\quad \sum_{a\in Q_1, s(a)=i}(1-R(a))+\sum_{a\in Q_1, t(a)=i} (1-R(a))=2$
\end{itemize}
 
 For such a consistent dimer model, there always exists a perfect matching, that is a subset $D$ of $\Gamma_1$ such that any vertex of $\Gamma_0$ belongs to exactly one edge in $D$. Since $Q$ is the dual of $\Gamma$ we regard $D$  as a subset of $Q_1$. We define a grading $d_D$ on $kQ$ as follows: $$d_D(a)=\left\{\begin{array}{ll}1 & \textrm{if } a\in D\\ 0 & \textrm{else.}\end{array}\right.$$ Since $D$ is a perfect matching, for any vertex $v\in \Gamma_0$ the cycle $c_v$ contains exactly one arrow of degree $1$, and then the potential $W$ is homogeneous of degree $1$. Hence $D$ induces a grading $d_D$ on the Jacobian algebra $B$. In other words $D$ is a cut of $(Q,W)$ in the sense of \cite{HI11}.
 
 \begin{prop}\label{dimer 3CY Gor1}
 Let $B$ be a Jacobian algebra coming from a consistent dimer model. Any perfect matching induces a grading on $B$ making it bimodule $3$-Calabi-Yau of Gorenstein parameter $1$.
 \end{prop}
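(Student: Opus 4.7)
My plan is to combine the known ungraded bimodule $3$-Calabi-Yau property of Jacobian algebras of consistent dimer models with the cut formalism of Herschend--Iyama, which is the mechanism that turns an ungraded Calabi-Yau structure into a graded one of Gorenstein parameter $1$.

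First I would invoke the results of Broomhead \cite{Bro08}, Davison \cite{Dav11} and Bocklandt \cite{Boc11}: for a consistent dimer model, the Jacobian algebra $B=\mathcal{P}(Q,W)$ is a noetherian bimodule $3$-Calabi-Yau algebra, and it carries the explicit Ginzburg bimodule resolution
$$0 \to B \otimes_L B \xrightarrow{\delta_3} \bigoplus_{a \in Q_1} Be_{s(a)} \otimes e_{t(a)} B \xrightarrow{\delta_2} \bigoplus_{a \in Q_1} Be_{t(a)} \otimes e_{s(a)} B \xrightarrow{\delta_1} B \otimes_L B \to B \to 0$$
with $L = kQ_0$, whose differentials are assembled from the arrows of $Q$ and the cyclic derivatives $\partial_a W$. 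The ungraded Calabi-Yau structure amounts to a natural isomorphism $P_\bullet \simeq P_\bullet^{\vee}[3]$ in $C^b(\proj B^{\rm e})$, obtained by pairing each summand of $P_1$ with the summand of $P_2$ at the same arrow, and $P_0$ with $P_3$.

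Next, I would use the perfect matching $D$: as already observed in the setup of this section, $D$ is a cut of $(Q,W)$ in the sense of \cite{HI11}, because every face-cycle $c_v$ of $Q$ contains exactly one arrow of $D$. Consequently $W$ is homogeneous of degree~$1$ for $d_D$, the Jacobian ideal is homogeneous, $B$ inherits a $\ZZ$-grading, and each derivative $\partial_a W$ is homogeneous of degree $1 - \deg a$. The Ginzburg resolution then lifts to a graded one by placing on it the shifts $P_0 = B \otimes_L B$, $P_1 = \bigoplus_a (Be_{t(a)} \otimes e_{s(a)} B)(-\deg a)$, $P_2 = \bigoplus_a (Be_{s(a)} \otimes e_{t(a)} B)(\deg a - 1)$, $P_3 = (B \otimes_L B)(-1)$, which are exactly those needed to make each $\delta_i$ homogeneous of degree $0$.

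Finally, I would verify that the ungraded CY pairing lifts to an isomorphism $P_\bullet \simeq P_\bullet^{\vee}[3](-1)$ of graded complexes. On each summand this is a direct computation: $\Hom_{B^{\rm e}}((Be_{t(a)} \otimes e_{s(a)} B)(-\deg a),\,B^{\rm e}) = (Be_{s(a)} \otimes e_{t(a)} B)(\deg a)$, and twisting by $(-1)$ produces precisely the $a$-summand of $P_2$; the check for $P_0 \leftrightarrow P_3$ is analogous. The conceptual heart of the argument, and the step I expect to be the only non-formal one, is that a single overall twist $(-1)$ suffices for the self-duality on every summand simultaneously: this reduces to the identity $\deg a + \deg(\partial_a W) = \deg W = 1$, i.e. to the cut condition itself. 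The main obstacle is thus purely bookkeeping---keeping track of the shifts and the sign conventions in the Ginzburg differentials---rather than anything requiring new ideas beyond those already available in \cite{HI11}.
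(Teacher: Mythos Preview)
Your proposal is correct and follows essentially the same approach as the paper: write down the explicit Ginzburg bimodule resolution with the appropriate degree shifts, invoke \cite[Thm~7.7]{Bro08} for the ungraded self-duality $P_\bullet\simeq P_\bullet^{\vee}[3]$, and then check that the grading induced by the perfect matching makes this isomorphism homogeneous with a global twist~$(-1)$. Your explanation of \emph{why} the shifts work---via the cut identity $\deg a+\deg(\partial_aW)=1$---is in fact more detailed than the paper, which simply records the shifted terms and says the graded self-duality is ``easy to check''; do note, however, that your shift on $P_2$ should read $(-(\deg a-1))=(1-\deg a)$ rather than $(\deg a-1)$ to match the paper's conventions and to make $\delta_2$ genuinely degree~$0$.
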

 
 \begin{proof}
 We define the following complex $P_\bullet$ of graded projective $B$-bimodules:
 $$\xymatrix{\cdots\ar[r] & 0\ar[r] &P_3\ar[r]^-{\partial_2} & P_2\ar[r]^-{\partial_1} & P_1\ar[r]^-{\partial_0} & P_0\ar[r] & 0 \ar[r] & \cdots},$$
where $$\begin{array}{rcl} P_0 &= &\bigoplus_{i\in Q_0} Be_i\ten e_i B\\ P_1&=&\bigoplus_{a\in Q_1} (Be_{t(a)}\ten e_{s(a)}B)(-d(a))\\P_2&=&\bigoplus_{b\in Q_1} (Be_{s(b)}\ten e_{t(b)}B)(1-d(b))\\P_3&=&\bigoplus_{i\in Q_0}Be_i\ten e_i B(-1)\end{array}$$ and where the maps $\partial_0$, $\partial_1$ and $\partial_2$ are defined as follows.
\[ \begin{array}{rcl} \partial_2(e_i\ten e_i) & = & \sum_{a,t(a)=i}a\ten e_i -\sum_{b, s(b)=i}e_i\ten
b; \\
\partial_1(e_{s(b)}\ten e_{t(b)}) & = & \sum_{a\in Q_1}\partial_{b,a}W  \textrm{ where }\partial _{b,a}(bpaq)=p\ten
q\in Be_{t(a)}\ten e_{s(a)}B  ;\\
\partial_0 (e_{t(a)}\ten e _{s(a)}) & = & a\ten e_{s(a)} -e_{t(a)}\ten a.  \end{array}\]

By \cite[Thm 7.7]{Bro08} this complex is a projective resolution of $B$ as a bimodule and satisfies $P^\vee_\bullet\simeq P[3]$ in $\Cc^{\rm b}(\proj B^{\rm e})$. It is then easy to check that, as a graded complex, it satisfies $P^\vee_\bullet\simeq P[3](-1)$ in $\Cc^{\rm b}(\gr\proj B^{\rm e})$. Hence the graded algebra $B$ is bimodule $3$-Calabi-Yau of Gorenstein parameter 1. 
\end{proof}

 \begin{rema}
 It is proved in \cite{Bro08,Dav11,Boc11} that the Jacobian algebra $B={\rm Jac}(Q,W)$ is a non-commutative crepant resolution of its center $C=Z(B)$ which is the coordinate ring of a Gorenstein affine toric threefold. Moreover the coordinate ring of any Gorenstein affine toric threefold can be obtained from a consistent dimer model \cite{Gul08,IU09}.
\end{rema}
The following result gives an interpretation of the stable category of Cohen-Macaulay modules over certain Gorenstein affine toric threefold in terms of cluster categories.

\begin{thma}\label{toric thm}
Let $\Gamma$ be a consistent dimer model, and denote by $B={\rm Jac}(Q,W)$ the associated Jacobian algebra. Assume there exists a perfect matching $D$ and a vertex $i$ of $Q$ with the following properties:
\begin{itemize}
\item the degree zero part $A$ of $B$ with respect to $d_D$ is finite dimensional
\item $i$ is a source of the quiver $Q-D$.
\item the algebra $B/\langle e_i\rangle$ is finite dimensional.
\end{itemize}
Denote by $C$ the center of the algebra $B$, and $\underline{A}$ the algebra $A/\langle e_i \rangle$. Then the algebra $C$ is a Gorenstein isolated singularity, and we have the following triangle equivalences
$$\xymatrix{\Db( \underline{A})\ar[rr]^-\sim\ar[d] && \underline{\CM}^\mathbb{Z}(C)\ar[d]\\ \Cc_2(\underline{A})\ar[rr]^-\sim && \underline{\CM}(C)}$$
where $\Cc_2(\underline{A})$ is the generalized 2-cluster category associated to the algebra $\underline{A}$.
\end{thma}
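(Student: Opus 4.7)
The plan is to verify that the Jacobian algebra $B$, with grading given by $d_D$ and idempotent $e := e_i$, satisfies all four axioms of Theorem~\ref{main diagram} with $d = 3$, and to identify $eBe$ with the center $C$. The theorem will then immediately produce both triangle equivalences, and the remaining assertions about $C$ will follow as corollaries.

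The axioms are checked as follows. Axiom (A1*) is exactly the content of Proposition~\ref{dimer 3CY Gor1}. Axiom (A3) ($\dim_k\underline{B}<\infty$) is among the hypotheses of Theorem~\ref{toric thm}. Axiom (A4) translates the hypothesis that $i$ is a source of $Q-D$: the degree-zero part $A=B_0$ has $Q-D$ as its Gabriel quiver, so a source at $i$ means $e_i A e_j = 0$ for every $j\neq i$, i.e.\ $eA(1-e)=0$. For (A2), one uses that $B$ is a finitely generated module over its noetherian commutative center $C$, a standard consequence of the NCCR structure of Jacobian algebras coming from consistent dimer models as established in~\cite{Bro08,Dav11,Boc11}; hence $B$ is itself noetherian.

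The next point is the identification $eBe = C$. Since $B$ is an NCCR of $C$, it is of the form $B\simeq\End_C(M)$ for some reflexive $C$-module $M$ having $C$ itself as a direct summand; the corresponding idempotent $e'$ satisfies $e'Be' = \End_C(C) = C$. What must be verified is that the combinatorially-singled-out vertex $i$ (a source of $Q-D$ with $B/\langle e_i\rangle$ finite dimensional) is exactly this idempotent up to Morita equivalence. I expect this to be the main technical obstacle: the argument uses the dimer-model/toric dictionary, where the source condition on the degree-zero quiver forces $Be_i$ to be a free $C$-module of rank one (the hypothesis that $B/\langle e_i\rangle$ is finite-dimensional ensures that $eBe$ is a full-rank subring rather than a smaller $C$-algebra).

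Once (A1*)--(A4) and $eBe = C$ are in place, Theorem~\ref{main diagram}(a),(b) give the triangle equivalences $F:\Db(\underline{A})\xrightarrow{\sim}\underline{\CM}^\ZZ(C)$ and $G:\Cc_2(\underline{A})\xrightarrow{\sim}\underline{\CM}(C)$ fitting in the required commutative square. The Gorenstein property of $C$ follows from Theorem~\ref{cluster tilting CM}(a), which shows $C=eBe$ is Iwanaga-Gorenstein, hence Gorenstein since it is commutative. Finally, $C$ is an isolated singularity: by Theorem~\ref{cluster tilting CM}(c), $\End_{\underline{\CM}(C)}(Be)\simeq\underline{B}$ is finite-dimensional by (A3), and since $Be$ is $2$-cluster tilting in $\CM(C)$ the whole stable category $\underline{\CM}(C)$ is a thick subcategory generated by $Be$, so every $\Hom_{\underline{\CM}(C)}(X,Y)$ is finite-dimensional over $k$; this is Auslander's characterization of isolated Gorenstein singularities in dimension at least two.
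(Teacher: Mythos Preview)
Your overall strategy is exactly the paper's: verify (A1*)--(A4) for $B$ with $d=3$ and $e=e_i$, identify $eBe$ with $C$, and invoke Theorem~\ref{main diagram}. Your verification of (A1*), (A2), (A3), (A4) matches the paper's essentially verbatim.

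The one place where you diverge is the identification $eBe\simeq C$, which you flag as ``the main technical obstacle'' and attempt to handle via the NCCR structure by arguing that the particular vertex $i$ must correspond to the summand $C$ of the reflexive module. This is unnecessary. The paper simply cites \cite[Lemma~5.6]{Bro08}, which asserts that for a Jacobian algebra $B$ arising from a consistent dimer model one has $eBe\simeq Z(B)=C$ for \emph{every} primitive idempotent $e$. No special property of the vertex $i$ is needed here; the source condition and the finite-dimensionality of $B/\langle e_i\rangle$ are used only for (A4) and (A3), not for this identification. Your sketched argument (forcing $Be_i$ to be free of rank one over $C$) is therefore superfluous, and in the form you wrote it is not quite a proof either.

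Your concluding remarks on why $C$ is Gorenstein and an isolated singularity are correct and a bit more explicit than the paper, which just records that $C$ is a Gorenstein affine toric threefold (from the cited literature) and leaves the isolated-singularity claim to follow from the $\Hom$-finiteness of $\underline{\CM}(C)\simeq\Cc_2(\underline{A})$ via Auslander's criterion, as you say.
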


\begin{proof}
The algebra $B$ satisfies \eqref{(A1*)} by Proposition \ref{dimer 3CY Gor1}. The algebra $C$ is a Gorenstein affine toric threefold and $B$ is a finitely generated $C$-module, hence $B$ is noetherian. The hypothesis on the perfect matching $D$ and the vertex $i$ are clearly equivalent to (A3) and (A4).  Moreover by \cite[Lemma 5.6]{Bro08}, the center of $B$ is isomorphic to $eBe$ for any primitive idempotent $e$ of $B$. Hence Theorem \ref{toric thm} is a consequence of Theorem \ref{main diagram}.
\end{proof}

\subsection{Examples}

Let $\Gamma$ and $D$ be given by the following picture.

\[\scalebox{.8}{
\begin{tikzpicture}[>=stealth, scale=.8]
\node (P1) at (1,1){$\bullet$}; \node (P2) at (3,1){$\circ$}; \node (P3) at
(3,3){$\bullet$}; \node (P4) at (1,3){$\circ$};
\node (Q1) at (0,0){$3$};\node (M1) at (0,2){$2$}; \node (M2) at (2,4){$4$};\node(Q2) at (0,4){$3$};\node(Q3) at (4,4){$3$};\node(M3) at (4,2){$2$};\node(Q4) at (4,0){$3$};\node (M4) at (2,0){$4$};
\draw[loosely dotted,thick]  (Q1)--(M1)--(Q2)--(M2)--(Q3)--(M3)--(Q4)--(M4)--(Q1);
\draw[line width=0.3cm,color=gray] (P1)--(P2);\draw[line width=0.3cm,color=gray] (P3)--(P4);
\draw (P1)--(P2)--(P3)--(P4)--(P1);
\draw (-0.5,1)--(P1)--(1,-0.5); \draw (4.5,1)--(P2)--(3,-0.5);\draw (-0.5,3)--(P4)--(1,4.5);\draw (3,4.5)--(P3)--(4.5,3);
\node at (2,2){$1$};
\end{tikzpicture}}\]

The associated Jacobian algebra $B$ is presented by the quiver

\[\scalebox{.8}{
\begin{tikzpicture}[>=stealth, scale=.8]
\node (P1) at (0,0){$1$};\node (P2) at (3,0){$2$};\node (P3) at (3,3){$3$};\node(P4) at (0,3){$4$};
\draw[->] (0.3,0.1)--node [swap,yshift=2mm] {$x_1$}(2.7,0.1);\draw[->] (0.3,-0.1)--node [swap,yshift=-2mm] {$x_2$}(2.7,-0.1);
\draw[<-] (0.3,3.1)--node [swap,yshift=2mm] {$z_2$}(2.7,3.1);\draw[<-] (0.3,2.9)--node [swap,yshift=-2mm] {$z_1$}(2.7,2.9);
\draw[->] (2.9,0.3)--node [swap,xshift=-2mm] {$y_1$}(2.9,2.7);\draw[->] (3.1,0.3)--node [swap,xshift=2.5mm] {$y_2$}(3.1,2.7);
\draw[<-] (-0.1,0.3)--node [swap,xshift=-3mm] {$w_2$}(-0.1,2.7);\draw[<-] (0.1,0.3)--node [swap,xshift=3mm] {$w_1$}(0.1,2.7);
\node (P5) at (12,1){with potential $W=w_1z_1y_1x_1+w_2z_2y_2x_2-w_1z_2y_1x_2-w_2z_1y_2x_1$.};
\end{tikzpicture}}\]

The center $C$ of this algebra is the semigroup algebra $C=\mathbb{C}[\mathbb{Z}^3\cap \sigma^\vee]$ where $\sigma^\vee$ is the positive cone 
$$\sigma^\vee=\{\lambda_1 n_1+\lambda_2 n_2+\lambda_3 n_3+\lambda_4 n_4, \lambda_i\geq 0\},\quad n_1=\begin{bmatrix}1\\1\\1\end{bmatrix}, n_2=\begin{bmatrix}1\\-1\\1\end{bmatrix},n_3=\begin{bmatrix}-1\\1\\1\end{bmatrix},n_4=\begin{bmatrix}-1\\-1\\1\end{bmatrix}.$$
The algebra $C$ is a homogenous coordinate algebra of $\mathbb{P}^1\times\mathbb{P}^1$.

Then the perfect matching $D$ corresponds to $\{x_1, x_2\}$. Thus the quiver of the subalgebra $A=B_0$ is acyclic so $A$ is finite dimensional and the vertex $1$ becomes a source in the quiver of $A$. Moreover, the algebra $\underline{B}=B/\langle e_1\rangle $ is the path algebra of an acyclic quiver, so it is finite dimensional. Therefore we can apply Theorem \ref{toric thm} and we obtain a triangle equivalence
$\Cc_2(\underline{A})\simeq\underline{\CM}(C)$ where $\underline{A}$ is the path algebra of the quiver$\xymatrix{2\ar@<.5ex>[r] \ar@<-.5ex>[r]&3\ar@<.5ex>[r] \ar@<-.5ex>[r] & 4}$.

\medskip

We end this paper by giving a non-commutative example. Note that in Theorem \ref{main diagram} the algebra $C$ is not necessarily commutative, and the idempotent $e$ is not necessarily primitive.

Let $\Gamma$ be the following  dimer model.

\[\scalebox{.8}{\begin{tikzpicture}[scale=.8]
\node(P1) at (0,0){$\circ$};\node(P2) at (3,0){$\bullet$};\node (P3) at (6,0){$\circ$};
\node(P4) at (4,2){$\circ$};\node(P5) at (0,3){$\bullet$};\node (P6) at (3,3){$\bullet$};\node (P7) at (6,3){$\bullet$};
\node (P8) at (2,4){$\circ$}; \node(P9) at (8,4){$\circ$};\node (P10) at (0,6){$\circ$}; \node (P11) at (3,6){$\bullet$};\node (P12) at (6,6){$\circ$}; \node (P13) at (4,8){$\circ$}; 

\draw [line width=0.3cm,color=gray](P5)--(P8);\draw [line width=0.3cm,color=gray](P10)--(P11);\draw [line width=0.3cm,color=gray](P6)--(P4);\draw [line width=0.3cm,color=gray](P7)--(P9);\draw [line width=0.3cm,color=gray](P12)--(8,6);\draw [line width=0.3cm,color=gray](P1)--(P2);\draw [line width=0.3cm,color=gray](P3)--(8,0);

\draw (P1)--(P2)--(P4)--(P6)--(P8)--(P5)--(P1)--(P6)--(P12)--(8,8);\draw(P4)--(P7)--(P12)--(P11)--(P8);
\draw (P2)--(P3)--(P7)--(P9);\draw (P5)--(P10)--(P11)--(P13);\draw (P10)--(2,8);\draw (8,6)--(P12)--(6,8);\draw(P3)--(8,2);\draw(P3)--(8,0); \draw (P10)--(0,8);

\node (Q1) at (2,1){$6$};\node (Q2) at (5,1){$4$};\node (Q3) at (1,2){$5$}; \node (Q4) at (7,2){$5$};\node (Q5) at (1,5){$1$};\node (Q6) at (7,5){$1$};\node (Q7) at (2,7){$6$};\node (Q8) at (5,7){$4$};
\draw [loosely dotted, thick] (Q3)--(1,1)--(Q1)--(Q2)--(7,1)--(Q4)--(Q6)--(7,7)--(Q8)--(Q7)--(1,7)--(Q5)--(Q3);
\node at(4,5){$3$};\node at (5,4){$2$};\node at (7,8){$5$};\node at (8,7){$6$};

\end{tikzpicture}}\]

The associated Jacobian algebra $B$ is presented by the quiver 

\[\scalebox{.9}{
\begin{tikzpicture}[>=stealth, scale=.6]
\node (P1) at (0,4){$1$};\node (P2) at (3,5){$2$};\node (P3) at (6,4){$3$};\node (P4) at (6,1){$4$};\node (P5) at (3,0){$5$};\node (P6) at (0,1){$6$};
\draw[->] (P1)--(P2);\draw[->](P2)--(P3);\draw[->](P3)--(P4);\draw[->] (P4)--(P5);\draw[->] (P5)--(P6);\draw[->](P6)--(P1);
\draw[->] (P1)--(P3);\draw[->] (P3)--(P5);\draw[->] (P5)--(P1);\draw[->] (P2)--(P4);\draw[->] (P4)--(P6);\draw[->] (P6)--(P2);
\node at (9,3){with potential}; 
\node at (14,2){$W=a_{65}a_{54}a_{43}a_{32}a_{21}a_{16}+a_{26}a_{64}a_{42}+a_{15}a_{53}a_{31}$ };
\node at (16,1) {$-a_{16}a_{64}a_{43}a_{31}-a_{65}a_{53}a_{32}a_{26}-a_{21}a_{15}a_{54}a_{42}$.};
\end{tikzpicture}}\]

In this case it is easy to check that the algebra $B/\langle e\rangle$ is not finite dimensional for any primitive idempotent $e$, or in other words, the center of $B$ is not an isolated singularity. However, the degree zero subalgebra $A=B_0$ and the algebra $\underline{B}=B/\langle e_1+e_2 \rangle$ are the path algebras of acyclic quivers, and are therefore finite dimensional. We can apply Theorem \ref{main diagram} with the perfect matching $D$ described in the picture above. We obtain a triangle equivalence $\Cc_2(\underline{A})\simeq \underline{\CM}(C)$ where $\underline{A}$ is the path algebra of the quiver 

\[\scalebox{.8}{
\begin{tikzpicture}[>=stealth, scale=.6]
\node (P3) at (6,4){$3$};\node (P4) at (6,1){$4$};\node (P5) at (3,0){$5$};\node (P6) at (0,1){$6$};
\draw[->](P3)--(P4);\draw[->] (P4)--(P5);\draw[->] (P5)--(P6);\draw[->] (P3)--(P5);\draw[->] (P4)--(P6);
\end{tikzpicture}}\]

\end{document}